\newtheorem{lem}{Lemma}[section]
\newtheorem{thm}{Theorem}[section]
\newtheorem{cor}{Corollary}[section]
\newtheorem{rmk}{Remark}[section]
\numberwithin{equation}{section}
\def\esssup{\mathop{\rm ess\, sup}}
\title[MDP for mildly stationary autoregressive model]
{Moderate deviations for the mildly stationary autoregressive model with dependent errors}
\begin{document}

\maketitle

\author{
\begin{center}
Hui Jiang
%\footnote{ E-mail: huijiang@nuaa.edu.cn}, Yu Mingming\footnote{E-mail: mengyilianmeng@163.com}
\\
\vskip 0.2cm
{\it School of Mathematics, Nanjing University of Aeronautics and Astronautics
\\ Nanjing 210016, P.R.China\\
\vskip 0.1cm

huijiang@nuaa.edu.cn}

\vskip 0.4cm

Guangyu Yang %\footnote{ E-mail: guangyu@zzu.edu.cn}
\\
\vskip 0.2cm
{\it School of Mathematics and Statistics, Zhengzhou University
\\ Zhengzhou 450001, P.R.China\\
\vskip 0.1cm
guangyu@zzu.edu.cn}

\vskip 0.4cm
Mingming Yu
\vskip 0.2cm
{\it School of Mathematics, Nanjing University of Aeronautics and Astronautics
\\ Nanjing 210016, P.R.China\\
\vskip 0.1cm

mengyilianmeng@163.com}

\end{center}
}

\begin{abstract}

In this paper, we consider the normalized least squares estimator of the parameter in a mildly stationary first-order autoregressive (AR(1)) model with dependent errors which are modeled as a mildly stationary AR(1) process. By martingale methods, we establish the moderate deviations for the least squares estimators of the regressor and error, which can be applied to understand the near-integrated second order autoregressive processes. As an application, we also obtain the moderate deviations for the Durbin-Watson statistic.

\vskip10pt

\noindent{\it AMS 2010 subject classification:} 60F10, 60G42, 62J05.

\vskip5pt

\noindent{\it Keywords:} Durbin-Watson statistic, martingale difference, mildly stationary autoregressive processes, moderate deviations.
\end{abstract}

%\tableofcontents

%%%%%%%%%%%%%%%%%%%%%%%%%%%%%%%%%%%%%%%%%%%%%%%%%%%%%%%%%%%%%%%%%%%%%%%%%%%%

%%%%%%%%%%%%%%%%%%%%%%%%%%%%%%%%%%%%%%%%%%%%%%%%%%%%%%%%%%%%%%%%%%%%%%%%%%%%

\section{Introduction}

\noindent Regression asymptotics with roots at or near unity have played an important role in time series econometrics. In order to cover more general time series structure, it has become popular in econometric methodology to study the models which permit that the regressors and the errors have substantial heterogeneity and dependence over time. This work is devoted to analyse a dynamic first order autoregressive model where the errors are dependent. More precisely, we consider the asymptotic behavior of the least squares estimators of the following autoregressive model,
\begin{equation}\label{model}
\left\{
\begin{array}{ll}
X_{k,n}={\theta_n}X_{k-1,n}+\varepsilon_{k,n},\\
\varepsilon_{k,n}={\rho_n}\varepsilon_{k-1,n}+V_k,
\end{array}
\right.\quad k=1,2,\ldots,n,\;n\geq1,
\end{equation}
where the unknown parameters $\theta_n,\,\rho_n\in\mathbb{R}$, $(X_{k,n})_{0\leq k\leq n}$ is observed, and the noise $(V_k)_{k\geq1}$ is a sequence of independent and identically distributed (i.i.d.) random variables with zero mean and a finite variance $\sigma^2$. For convenience, let $X_{0,n}=\varepsilon_{0,n}=0$ for every $n\geq1$. We take two-stage methods to estimate the unknown parameters, $\theta_n$ and $\rho_n$. It is well-known that the least squares estimator of $\theta_n$ based on the observed variables is given by
\begin{equation}\label{theta}
\hat{\theta}_n=\frac{\sum_{k=1}^n{X_{k,n} X_{k-1,n}}}{\sum_{k=1}^n{X_{k-1,n}^2}},\quad n\geq1.
\end{equation}
To obtain the estimator of $\rho_n$, we substitute $\hat{\theta}_n$ for $\theta_n$ in (\ref{model}), and denote the residuals by
\begin{equation}\label{eq-1}
\hat\varepsilon_{k,n}=X_{k,n}-\hat{\theta}_nX_{k-1,n},\quad 1\leq k \leq n,
\end{equation}
then the least squares estimator of $\rho_n$ can be defined as
\begin{equation}\label{rho}
\hat{\rho}_n=\frac{\sum_{k=1}^n{\hat{\varepsilon}_{k,n}
\hat{\varepsilon}_{k-1,n}}}{\sum_{k=1}^n{\hat{\varepsilon}_{k-1,n}}^2},\quad n\geq1,
\end{equation}
where $\hat{\varepsilon}_{0,n}:=0$ for every $n\geq1$.
\vskip5pt

The model (\ref{model}) has a close connection with some existing models. Firstly, we fix the autoregressive coefficient $\theta_n$, i.e. let $\theta_n\equiv\theta$. If $\rho_n\equiv0$, then the model (\ref{model}) is precisely the classic autoregressive process with i.i.d. errors. In this case, the asymptotic behaviors of $\hat{\theta}_n$ have been examined thoroughly. For example, when the model is {\it stationary} ($|\theta|<1$), under some moment conditions, Anderson \cite{Anderson} showed asymptotic normality of $\hat{\theta}_n$. However, as pointed out previously by Anderson \cite{Anderson}, White \cite{White}, and Dickey \& Fuller \cite{DiFu1979}, the situation becomes more complicated for the {\it critical} case ($|\theta|=1$) and the {\it explosive} case ($|\theta|>1$), where the limiting distributions are functionals of Brownian motion and standard Cauchy, respectively. In addition, if the regressive coefficient $\rho_n$ in the errors is also fixed, i.e. $\rho_n\equiv\rho$, to answer some open problems on the Durbin-Watson statistic, Bercu and Pro\"{i}a \cite{Bercu-2013} investigated the asymptotic normality of the least squares estimators $\hat{\theta}_n$ and $\hat{\rho}_n$, in the stationary case, i.e. $|\theta|<1$ and $|\rho|<1$. Moreover, the continuous version of ~(\ref{model})
 is Ornstein-Uhlenbeck process~($\theta_n\equiv\theta,~\rho_n\equiv0$),
 or Ornstein-Uhlenbeck process driven by Ornstein-Uhlenbeck process ~($\theta_n\equiv\theta,~\rho_n\equiv\rho$)~(\cite{bercu-proia-savy}).
 For the associated statistical inferences, one can see~(\cite{chenY},\cite{gao-jiang-wang},\cite{gao-jiang-scm},
 \cite{hu-nualart},\cite{hu-nualart-zhou},\cite{jiang-liu-wang},\cite{shen-yin},\cite{shen-yin-correct}) and the references therein.
\vskip5pt

Secondly, we assume that the regression parameter is {\it time-varying}, i.e. $\theta_n$ depending on the sample size $n$. If $\rho_n\equiv0$ and $\theta_n=1+O(n^{-1})$, then the model (\ref{model}) turns to be the {\it near unit root processes} raised by Bobkoski \cite{Bobkoski1983} and Cavanagh \cite{Cavanagh1985} to understand the phenomenon that the discriminatory power of statistical tests for the presence of unit root is generally quite low against the alternative of root which is close, but not equal, to unity. Phillips \cite{Phillips} and Chan \& Wei \cite{Chan-Wei} established that the asymptotic distribution of $\hat{\theta}_n$ is the stochastic integration of some exponential function with respect to Brownian motion. If $\rho_n\equiv0$ and $\theta_n=1+\frac{\gamma}{k_n}$ for some fixed $\gamma\neq0$ where $k_n\to\infty$ and $k_n=o(n)$, then the model (\ref{model}) turns to be the {\it mildly autoregressive model} raised by Phillips \& Magdalinos \cite{Philllips-Magdalinos}. It is interesting that their results {\it match} the standard limit theory of the {\it time-invariant} model and partially bridge the stationary, the local to unity and the explosive cases.

\vskip5pt

Notice that, by simple calculations, the model (\ref{model}) can be written as
\begin{align*}
	X_{k,n}=(\theta_n+\rho_n)X_{k-1,n}-\theta_n\rho_nX_{k-2,n}+V_{k},
	\qquad k=1,2,\dots,n,\;n\geq1.
\end{align*}
Clearly, it is an AR(2) process and the roots of its characteristic polynomial both tend to unity as $\theta_n\to1$ and $\rho_n\to1$.
In fact, Nabeya \& Perron \cite{NabeyaPerron1994} once introduced the model (\ref{model}), where $\theta_n=1+\frac{\gamma_1}{n}$ and $\rho_n=1+\frac{\gamma_2}{n}$ for some fixed $\gamma_1$ and $\gamma_2$. They showed that the asymptotic distribution of $\hat{\theta}_n$ is some kind of functional of Brownian motion. And they also pointed out that the model (\ref{model}) can be regarded as an approximate version of the second order autoregressive process with two unit roots.

\vskip5pt

Motivated by above discussions, we will consider the {\it time-varying} model (\ref{model}) and devote to the asymptotic properties of the least squares estimators, $\hat{\theta}_n$ and $\hat{\rho}_n$. In the preprint \cite{JiangYuYang2015}, we proved the asymptotic normality of $\hat{\theta}_n$ and $\hat{\rho}_n$ when $|\theta_n|\to1$ and $|\rho_n|\to1$ both within stationary or explosive regions. The object of the present paper is to establish the moderate deviation principle of $\hat{\theta}_n$ and $\hat{\rho}_n$, when $|\theta_n|\to1$ and $|\rho_n|\to1$.

\vskip5pt

It is well known that the large deviation estimates have proved to be a crucial tool required to handle many questions in statistics, engineering, statistical mechanics, and applied probabilities.
For convenience, let us first recall some conceptions of large deviations.
Given a sequence of random variables $(X_n)_{n\geq1}$ with values in a topological space $(S,\mathcal{S})$, a sequence of positive numbers
$(a_n)_{n\geq1}$ such that $a_n\to\infty$, it is said that
$(X_n)_{n\geq1}$ satisfies the large deviation principle
with speed $a_n$ and rate function $I$, if $I$ is non-negative, lower semi-continuous and for any
Borel set $A$,
\begin{align}
-\inf_{x\in{A^0}}I(x)&\leq\liminf_{n\to\infty}\frac{1}{a_n}\log
{P}(X_n\in{A})\nonumber\\
&\leq\limsup_{n\to\infty}\frac{1}{a_n}\log
{P}(X_n\in{A})\leq
-\inf_{x\in{\bar{A}}}I(x),
\end{align}
where $A^\circ$ and $\bar{A}$ denote the interior and closure of $A$ respectively; we say the rate function $I$ is {\it good} if its level set $\{x\in S: I(x)\leq\alpha\}$ is compact for all $\alpha>0$.
If additionally, we assume that $X_n$ satisfies a fluctuation theorem such as central limit theorem, that is,
there exists a sequence of positive numbers $b_n\to\infty$ such that $b_n(X_n-c_n)\stackrel{\mathcal{L}}\rightarrow Y$,
where $c_n$ is a sequence of real numbers, $Y$ has a non-degenerate distribution, and $\stackrel{\mathcal{L}}\rightarrow$ means the convergence in distribution,  then, in general, a large deviation principle for $(Y_n:=r_n(X_n-c_n))_{n\geq 1}$ is also called a moderate deviation principle for $(X_n)_{n\geq 1}$, where $r_n$ is an intermediate
scale between $1$ and $b_n$, that is, $r_n\to \infty $ and $b_n/r_n\to\infty$.
For a full description of large deviation theory and its applications, we refer to Dembo \& Zeitouni \cite{Dembo}.
\vskip5pt

For the classic autoregressive model, i.e. $\theta_n\equiv\theta$ and $\rho_n\equiv0$, Bercu \cite{Bercu-2001} and Worms \cite{Worms-1} provided the large deviation estimates of $\hat{\theta}_n$ in the stationary, critical and explosive cases when the noise is Gaussian. Miao \& Shen \cite{Miao-Shen} proved the moderate deviation principle of $\hat{\theta}_n$ for general i.i.d. noise which satisfies Gaussian integrability. Recently, Miao {\it et al.} \cite{Miao-Yang} extended the results in \cite{Miao-Shen} to the time-varying model, and obtained the moderate deviations of $\hat{\theta}_n$ when $\rho_n\equiv0$ and $\theta_n\to1$ within the stationary regions, while Pro\"{i}a \cite{Proia2020} extended the results in \cite{Miao-Yang} to the autoregressive process of any order; interestingly, their results both match the standard limit theory of the time-invariant model. Bitseki Penda {\it et al.} \cite{Penda} studied the moderate deviations of $\hat{\theta}_n$ and $\hat{\rho}_n$ for the model (\ref{model}) when $\theta_n\equiv\theta$ and $\rho_n\equiv\rho$ both in stationary cases, and the noise satisfies a less restrictive Chen-Ledoux type condition. As an application, they obtained the moderate deviation principle of the Durbin-Watson statistic.

\vskip5pt

The main contribution of this paper is to extend the results in \cite{Penda} to the time-varying model (\ref{model}) when $|\theta_n|\to1$ and $|\rho_n|\to1$ both within the stationary regions. It is interesting that, when $\theta_n$ and $\rho_n$ have opposite signs, the estimators $\hat{\theta}_n$ and $\hat{\rho}_n$ have the same rates of convergence and rate functions.
In addition, we remark that the methods of proof mainly rely on the deviation inequalities for martingale arrays and our results can also be applied to understand the near-integrated second order autoregressive processes. The rest of this article is organized as follows. The next section is devoted to the descriptions of our main results.
 Then the proofs of main results are completed in Section 3. Finally, the proofs of some technique lemmas are given in Section 4.

%%%%%%%%%%%%%%%%%%%%%%%%%%%%%%%%%%%%%%%%%%%%%%%%%%%%%%%%%%%%%%%%%%%%%%%%%%

%%%%%%%%%%%%%%%%%%%%%%%%%%%%%%%%%%%%%%%%%%%%%%%%%%%%%%%%%%%%%%%%%%%%%%%%%%

\section{Main results}\label{sec2}

%%%%%%%%%%%%%%%%%%%%%%%%%%%%%%%%%%%%%%%%%%%%%%%%%%%%

\subsection{Assumptions}

For the model (\ref{model}), let $(V_n)_{n\geq1}$ be a sequence of real valued i.i.d. random variables with zero mean and a finite variance $\sigma^2$,
and~$V_1$ satisfies the Gaussian integrability condition, i.e. for some $t_0>0$,
\begin{equation}\label{exp-moment}
E\left(\exp\{t_0V_1^2\}\right)<\infty.
\end{equation}

Consider the following two cases of ~$\theta_n$ and $\rho_n$:
\begin{align*}
&{\rm{(Case~I)}}\qquad\qquad\;\;\theta_n=1+\frac{\gamma_1}{\kappa_n},\quad \rho_n=1+\frac{\gamma_2}{\kappa_n},\qquad \gamma_1<0,\;\gamma_2<0;
\\
&{\rm{(Case~II)}}\qquad\qquad\theta_n=1+\frac{\gamma_1}{\kappa_n},\quad \rho_n=-1-\frac{\gamma_2}{\kappa_n},\qquad \gamma_1<0,\;\gamma_2<0.
\end{align*}
For the above cases, $(b_n)_{n\geq1}$ and $(\lambda_n)_{n\geq1}$ are two sequences of positive numbers and satisfy that
\vskip3pt

\noindent{(H-I)}\quad $(b_n)_{n\geq1}$ is a sequence of increasing positive numbers satisfying
$$
b_n\to\infty,\quad\frac{n}{b_n^6\kappa_n^2}\to\infty,\quad \frac{n}{b_n^2\kappa_n^{5}}\to\infty, ~\text{and}~
\quad\frac{nb_n^2}{\kappa_n^5\log^2n}\to\infty;
$$

\noindent{(H-II)}\quad $(\lambda_n)_{n\geq1}$ is a sequence of increasing positive numbers satisfying
$$
\lambda_n\to\infty,\quad\frac{n}{\lambda_n^6\kappa_n^6}\to\infty,\quad \frac{n}{\lambda_n^2\kappa_n^{11}}\to\infty, ~\text{and}~
\quad\frac{n\lambda_n^2}{\kappa_n^7\log^2n}\to\infty.
$$

Moreover, we need the following Chen-Ledoux condition (\cite{Penda},\cite{Chen},\cite{Eichelsbacher},\cite{Ledoux}):
\vskip3pt

\noindent(C-L~($a_n$))\quad as $n\to\infty$,
\begin{equation}\label{C-L}
\frac{1}{a_n^2}\log{nP\left(|V_1|^4>a_n\sqrt{n}\right)}\to-\infty,
\end{equation}
where~$a_n=b_n$ or $\lambda_n$. Note that, if $V_1$ is a Gaussian random variable, then the condition~(C-L~($a_n$)) holds.
\vskip5pt

For the convenience of statement, throughout this paper, we always assume that {\it under (Case~I), conditions (H-I) and (C-L~($b_n$)) hold;
under (Case~II), conditions (H-II) and (C-L~($\lambda_n$)) are valid.}

%%%%%%%%%%%%%%%%%%%%%%%%%%%%%%%%%%%%%%%%%%%%%%%%%%%%%%%%%%%%%%%%%%%%%%%%%%%%

\subsection{Moderate deviations}

For some convenience, denote
\begin{align}\label{thetastar}
\theta^*_n=\frac{\theta_n+\rho_n}{1+\theta_n\rho_n},
\quad\rho^*_n=\theta_n\rho_n\theta^*_n,
%=\frac{\theta_n\rho_n(\theta_n+\rho_n)}{1+\theta_n\rho_n},
\quad d^*_n=2(1-\rho^*_n)
\end{align}
and
\begin{align}\label{Gamma}
\Gamma=
\begin{pmatrix}
-\frac{\gamma_1\gamma_2(\gamma_1+\gamma_2)}{2} & 0
\\
0&-2(\gamma_1+\gamma_2)
\end{pmatrix}.
\end{align}

\begin{thm}\label{MDP-theta-rho}
Under (Case~I),
$$
\left\{\frac{1}{b_n}\begin{pmatrix}
\sqrt{n\kappa_n^3}(\hat{\theta}_n-\theta^*_n)\\\sqrt{n\kappa_n}(\hat{\rho}_n-\rho^*_n)
\end{pmatrix},\,
n\geq1\right\}
$$
satisfies the large deviation principle with speed $b_n^2$ and good rate function
\begin{align}
I_{\theta,\rho}(x):=\frac{x^{\tau}\Gamma^{-1} x}{2},\quad x\in\mathbb{R}^2.
\end{align}
%That is, for any Borel subset $E\subset\mathbb{R}^2$, we have
%\begin{equation}
%\aligned
%-\inf_{x\in E^\circ} I_{\theta,\rho}(x)&\leq\liminf_{n\to\infty}\frac{1}{b_n^2}\log P\left(\frac{1}{b_n}
%\begin{pmatrix}
%\sqrt{n\kappa_n^3}(\hat{\theta}_n-\theta^*_n)\\
%\sqrt{n\kappa_n}(\hat{\rho}_n-\rho^*_n)
%\end{pmatrix}\in E\right)\\
%&\leq\limsup_{n\to\infty}\frac{1}{b_n^2}
%\log P\left(\frac{1}{b_n}\begin{pmatrix}\sqrt{n\kappa_n^3}(\hat{\theta}_n-\theta^*_n)\\
%\sqrt{n\kappa_n}(\hat{\rho}_n-\rho^*_n)\end{pmatrix}
%\in E\right)\\
%&\leq-\inf_{x\in\bar{E}}I_{\theta,\rho}(x),
%\endaligned
%\end{equation}
%where, $E^\circ$ and $\bar{E}$ denote the interior and closure of $E$ respectively.

In particular, $\left\{\frac{\sqrt{n\kappa_n^3}}{b_n}(\hat{\theta}_n-\theta^*_n),\, n\geq1\right\}$ and $\left\{\frac{\sqrt{n\kappa_n}}{b_n}(\hat{\rho}_n-\rho^*_n),\,n\geq1\right\}$ satisfy the large deviation principle with speed $b_n^2$ and good rate functions
\begin{align}
I_{\theta}(x):=-\frac{x^2}{\gamma_1\gamma_2(\gamma_1+\gamma_2)}\quad {\rm and}\quad
I_{\rho}(x):=-\frac{x^2}{4(\gamma_1+\gamma_2)}, \quad x\in\mathbb{R},
\end{align}
respectively.
\end{thm}

%\begin{thm}\label{thm-4}
%Under conditions ~(C.2)-(C.4),
%~$\left\{\frac{\sqrt{n\kappa_n^3}}{b_n}(\hat{\theta}_n-\theta^*_n), n\geq1\right\}$ %satisfies
%the large deviations with speed~$b_n^2$ and good rate function
%$$
%I_{\theta}(x)=-\frac{x^2}{\gamma_1\gamma_2(\gamma_1+\gamma_2)}.~\Box
%$$
%\end{thm}

From the preprint paper of Jiang {\it et al.} \cite{JiangYuYang2015}, we know that, under (Case~II), the covariance of limiting distribution of
$\hat{\theta}_n$ and $\hat{\rho}_n$ is singular. Therefore, in this situation, we study the moderate deviations for each estimator individually.

\begin{thm}\label{MDP-theta-rho-II}
Under (Case~II),
$$
\left\{\frac{\sqrt{n/\kappa_n}}{\lambda_n}(\hat{\theta}_n-\theta^*_n),\, n\geq1\right\},\quad
\left\{\frac{\sqrt{n/\kappa_n}}{\lambda_n}(\hat{\rho}_n-\rho^*_n),\,n\geq1\right\}
$$
satisfy the large deviation principle with speed $\lambda_n^2$ and same good rate functions
\begin{align}
J(x):=-\frac{(\gamma_1+\gamma_2)^3x^2}{16\gamma_1\gamma_2}, \quad x\in\mathbb{R}.
\end{align}
\end{thm}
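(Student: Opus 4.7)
The plan is to follow the same martingale-ratio framework used for Theorem~\ref{MDP-theta-rho} but to exploit the singularity of the joint limiting covariance in Case~II by reducing both $\hat{\theta}_n-\theta^*_n$ and $\hat{\rho}_n-\rho^*_n$ to a single scalar martingale. Starting from the decompositions prepared in Section~\ref{sec3}, I would write
$$
\hat\theta_n-\theta^*_n=\frac{N_n^{(\theta)}+R_n^{(\theta)}}{D_n^{(\theta)}},\qquad \hat\rho_n-\rho^*_n=\frac{N_n^{(\rho)}+R_n^{(\rho)}}{D_n^{(\rho)}},
$$
where $N_n^{(\theta)}$ and $N_n^{(\rho)}$ are sums of martingale differences driven by $V_k$, the denominators are the empirical second moments $\sum X_{k-1,n}^{2}$ and $\sum\hat{\varepsilon}_{k-1,n}^{2}$, and the remainders $R_n^{(\theta)},R_n^{(\rho)}$ collect boundary terms and the contribution of plugging in $\hat\theta_n$ inside the residuals.

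The first step is the deterministic normalization of the denominators: I would show that $(\kappa_n/n)D_n^{(\theta)}$ and $(\kappa_n/n)D_n^{(\rho)}$ converge superexponentially at speed $\lambda_n^2$ to explicit positive constants depending on $\gamma_1,\gamma_2$; this uses the mildly stationary second-moment asymptotics together with the summability requirements built into (H-II). The second step is an MDP at speed $\lambda_n^2$ for the leading martingale $N_n^{(\theta)}$ (respectively $N_n^{(\rho)}$), obtained by verifying Puhalskii's criterion for martingale arrays: convergence of the normalized bracket to a deterministic variance, plus a negligible-jumps condition that is exactly (C-L~$(\lambda_n)$). The third step is to bound the remainders $R_n^{(\theta)}$ and $R_n^{(\rho)}$ superexponentially in $\lambda_n^2$ using the fourth-moment tail in the Chen--Ledoux condition together with the rates imposed in (H-II). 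The MDP for the ratios then follows by a Slutsky-type argument at the level of the MDP.

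The main obstacle is precisely the singularity of the joint covariance matrix flagged just before the theorem statement. Under Case~II one has $1+\theta_n\rho_n=-(\gamma_1+\gamma_2)/\kappa_n+O(\kappa_n^{-2})$, so the denominator defining $\theta^*_n$ is itself of order $\kappa_n^{-1}$; this is what forces the unusual scaling $\sqrt{n/\kappa_n}$ in place of the $\sqrt{n\kappa_n^3}$ and $\sqrt{n\kappa_n}$ of Case~I. I would track these $O(\kappa_n^{-1})$ corrections carefully inside the linearizations of $\hat\theta_n-\theta^*_n$ and $\hat\rho_n-\rho^*_n$ and verify that, after rescaling, $N_n^{(\theta)}$ and $N_n^{(\rho)}$ become asymptotically proportional as martingale sums; the ``orthogonal'' martingale direction that yielded the independent $\rho$-component in Case~I is now absorbed at lower order into the remainders. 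This alignment is what makes the two univariate MDPs share the common rate function $J$. Computing the limiting bracket of the surviving martingale and dividing by the squared limit of the denominator should then yield exactly the coefficient $-(\gamma_1+\gamma_2)^3/(16\gamma_1\gamma_2)$ appearing in $J$.
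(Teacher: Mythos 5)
Your plan is correct and follows essentially the same route as the paper: the paper's decomposition (\ref{decom-theta-rho}) in Case~II writes both rescaled estimation errors as $\tilde{Y}_nM_n/(\lambda_n\sqrt{n\kappa_n})$ plus a remainder, with the $U_n$ direction pushed into $\tilde{R}_n(\theta,\rho)$ exactly as you describe, the random coefficients $\tilde{Y}_n$ shown superexponentially close to a deterministic vector (Lemma~\ref{equiv-Y}), and the scalar martingale MDP obtained via truncation plus Puhalskii's criterion under (C-L~$(\lambda_n)$) and (H-II). Your identification of the singular covariance, the $\sqrt{n/\kappa_n}$ scaling forced by $1+\theta_n\rho_n=O(\kappa_n^{-1})$, and the asymptotic proportionality of the two leading martingales is precisely the mechanism the paper uses to get the common rate function $J$.
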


\begin{rmk}
For the time-invariant models, i.e. $\theta_n\equiv\theta$ and $\rho_n\equiv\rho$, Bitseki Penda et al. \cite{Penda} showed that, when $\theta=-\rho$, $\left\{\frac{\sqrt{n}}{\lambda_n}(\hat{\theta}_n-\theta^*_n), n\geq1\right\}$ and $\left\{\frac{\sqrt{n}}{\lambda_n}(\hat{\rho}_n-\rho^*_n), n\geq1\right\}$ have different rate functions.
\end{rmk}

As pointed out by King \cite{King0}, the following Durbin-Watson statistic,
\begin{equation}\label{D}
\hat{d}_n=\frac{\sum_{k=1}^n(\hat\varepsilon_{k,n}-\hat\varepsilon_{k-1,n})^2}
{\sum_{k=1}^n\hat\varepsilon_{k,n}^2},\quad n\geq1,
\end{equation}
plays an important role in the test of the serial correlation.
As an application of Theorem \ref{MDP-theta-rho} and Theorem \ref{MDP-theta-rho-II}, we have the following result.

\begin{cor}\label{mdp-DW}
Define
\begin{align}
I_{d}(x)=-\frac{x^2}{16(\gamma_1+\gamma_2)},\quad
J_{d}(x)=-\frac{(\gamma_1+\gamma_2)^3x^2}{64\gamma_1\gamma_2},\quad x\in\mathbb{R},
\end{align}
then, for any $x>0$, we have
\begin{align}
\left\{\begin{array}{ll}
\lim_{n\to\infty}\frac{1}{b_n^2}\log P\left(\frac{\sqrt{n\kappa_n}}{b_n}
|\hat{d}_n-d_n^*|\geq x\right)=-I_d(x),& \rm{Case~I}\\
\lim_{n\to\infty}\frac{1}{\lambda_n^2}\log P\left(\frac{\sqrt{n/\kappa_n}}{\lambda_n}
|\hat{d}_n-d_n^*|\geq x\right)=-J_d(x),& \rm{Case~II}
\end{array}\right..
\end{align}
\end{cor}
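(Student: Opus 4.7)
The plan is to reduce $\hat{d}_n-d_n^*$ to the scalar multiple $-2(\hat{\rho}_n-\rho_n^*)$ modulo a super-exponentially negligible remainder, and then read off the rate function from the one-dimensional MDPs for $\hat{\rho}_n$ contained in Theorems \ref{MDP-theta-rho} and \ref{MDP-theta-rho-II}. Writing $S_n:=\sum_{k=1}^n\hat{\varepsilon}_{k,n}^2$, expanding the numerator of $\hat{d}_n$, and using $\hat{\varepsilon}_{0,n}=0$ to get $\sum_{k=1}^n\hat{\varepsilon}_{k-1,n}^2=S_n-\hat{\varepsilon}_{n,n}^2$, a direct computation yields the exact identity
\[
\hat{d}_n - d_n^* \;=\; -2(\hat{\rho}_n-\rho_n^*) \;-\; (1-2\hat{\rho}_n)\,\frac{\hat{\varepsilon}_{n,n}^2}{S_n} \;=:\; -2(\hat{\rho}_n-\rho_n^*)-R_n.
\]

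Next, I would apply the contraction principle under $y\mapsto -2y$ to each marginal MDP. In Case I, $\tfrac{\sqrt{n\kappa_n}}{b_n}(\hat{\rho}_n-\rho_n^*)$ has rate $I_\rho$ by Theorem \ref{MDP-theta-rho}, and $I_\rho(-x/2)=-x^2/(16(\gamma_1+\gamma_2))=I_d(x)$. In Case II, $\tfrac{\sqrt{n/\kappa_n}}{\lambda_n}(\hat{\rho}_n-\rho_n^*)$ has rate $J$ by Theorem \ref{MDP-theta-rho-II}, and $J(-x/2)=-(\gamma_1+\gamma_2)^3 x^2/(64\gamma_1\gamma_2)=J_d(x)$. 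Both rate functions match the ones stated in the corollary, so it remains only to transfer the MDP from $-2(\hat{\rho}_n-\rho_n^*)$ to $\hat{d}_n-d_n^*$.

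The main obstacle is to show that $R_n$ is exponentially equivalent to zero at the appropriate MDP scale, i.e.\
\[
\limsup_{n\to\infty}\tfrac{1}{a_n^2}\log P\!\Bigl(\tfrac{\sqrt{n\kappa_n^\alpha}}{a_n}\,|R_n|>\delta\Bigr)=-\infty,\quad \delta>0,
\]
with $(a_n,\alpha)=(b_n,1)$ in Case I and $(a_n,\alpha)=(\lambda_n,-1)$ in Case II. Heuristically, $|1-2\hat{\rho}_n|$ is bounded on overwhelming events (by concentration of $\hat{\rho}_n$), the stationary variance of $\varepsilon_{k,n}$ is of order $\kappa_n$ so $\hat{\varepsilon}_{n,n}^2=O_{\mathrm p}(\kappa_n)$, and $S_n$ concentrates around its martingale mean of order $n\kappa_n$; hence $R_n=O_{\mathrm p}(1/n)$, which lies well below the MDP scales permitted by (H-I) and (H-II). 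Upgrading these stochastic orders to super-exponential bounds should reuse the martingale exponential inequalities and the Chen--Ledoux tail control of $V_1$ already developed for Theorems \ref{MDP-theta-rho} and \ref{MDP-theta-rho-II}: represent $\hat{\varepsilon}_{n,n}$ as a linear combination of $(V_k)_{k\leq n}$ and apply such an inequality, and bound $S_n$ below via a matching quadratic-variation estimate. Once $R_n$ is shown to be exponentially equivalent to zero, the corollary follows from the exponential equivalence principle (see, e.g., Theorem 4.2.13 of Dembo--Zeitouni) combined with the evenness of $I_d$ and $J_d$, which converts the LDP into the two-sided tail estimate stated in the corollary.
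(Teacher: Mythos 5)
Your decomposition $\hat{d}_n-d_n^*=-2(\hat{\rho}_n-\rho_n^*)+(2\hat{\rho}_n-1)\hat{\varepsilon}_{n,n}^2/J_n$ is algebraically identical to the identity the paper imports from Bercu--Pro\"{\i}a (its $R_n(d)=2(\hat{\rho}_n-\rho^*_n)f_n+(2\rho^*_n-1)f_n$ with $f_n=\hat{\varepsilon}_{n,n}^2/J_n$), and your plan --- contraction of the marginal MDP for $\hat{\rho}_n$ under $y\mapsto-2y$ plus super-exponential negligibility of the boundary term via the concentration of $J_n$, the tail control of $\hat{\varepsilon}_{n,n}^2$ through the exponential moment of $V_1$, and the MDP for $\hat{\theta}_n$ --- is exactly the route the paper takes, with the correct rate-function arithmetic. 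The only difference is that you leave the negligibility estimate as a sketch where the paper carries it out by splitting $\hat{\varepsilon}_{n,n}=(\theta_n^*-\hat{\theta}_n)X_{n-1,n}+(\theta_n-\theta_n^*)X_{n-1,n}+\varepsilon_{n,n}$ and invoking its Lemmas on $X_{n,n}^2$, $\varepsilon_{n,n}^2$, $J_{n-1,n}$ and Corollary on $\hat\theta_n$; these are precisely the tools you name, so the approach is essentially the same.
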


%%%%%%%%%%%%%%%%%%%%%%%%%%%%%%%%%%%%%%%%%%%%%%%%%%%%%%%%%%%%%%%%%%%%%%%%%%%%%%%%%%%%%%
%%%%%%%%%%%%%%%%%%%%%%%%%%%%%%%%%%%%%%%%%%%%%%%%%%%%%%%%%%%%%%%%%%%%%%%%%%%%%%%%%%%%%%

\subsection{Discussions}

It is worthwhile to give some additional comments on our results and other related problems.

\begin{enumerate}
  \item %In fact, under an additional symmetry assumption %on the distribution of the noise $(V_k)_{k\geq1}$, %Theorem \ref{MDP-theta-rho} holds in the case, %$\theta_n\to-1$ and $\rho_n\to-1$, both within the %stationary regions.
  In fact, by taking the same line of the proofs of Theorems \ref{MDP-theta-rho} and \ref{MDP-theta-rho-II}, there are similar moderate deviations results for the cases, $\theta_n\to-1,\rho_n\to-1$ and $\theta_n\to-1,\rho_n\to1$, both within the stationary regions. For the purpose of simplicity, in the present paper we only state the results for the case, $\theta_n\to1,\rho_n\to1$ and $\theta_n\to1,\rho_n\to-1$, both within the stationary regions, i.e. (Case I) and (Case II). Fortunately, if the distribution of $V_1$ is symmetric, following the simple procedure below, we can show that Theorems \ref{MDP-theta-rho} and Theorem \ref{MDP-theta-rho-II} hold for the cases $\theta_n\to-1,\rho_n\to-1$ and $\theta_n\to-1,\rho_n\to1$ both within the stationary regions, respectively.
  Suppose that
  \begin{align*}
  \left\{
  \begin{array}{ll}
  X_{k,n}&={\theta_n}X_{k-1,n}+\varepsilon_{k,n},\\
  \varepsilon_{k,n}&={\rho_n}\varepsilon_{k-1,n}+V_k,
  \end{array}
  \right.\quad k=1,2,\ldots,n,\;n\geq1,
  \end{align*}
  where the unknown parameters $\theta_n,\rho_n\in(-1,0)$, and $(V_k)_{k\geq1}$ is a sequence of i.i.d. random variables with a symmetric distribution. Denote
  \begin{align*}
  &\alpha_n=-\theta_n,\quad \beta_n=-\rho_n,\\
  Y_{k,n}=(-1)^{k}X_{k,n},&\quad \eta_{k,n}=(-1)^k\varepsilon_{k,n},\quad W_k=(-1)^kV_k,
  \end{align*}
  then $\alpha_n,\beta_n\in(0,1)$, $(W_k)_{k\geq1}$ is a sequence of i.i.d. random variables with the same common distribution as that of $V_1$ and
  \begin{align*}
  \left\{
  \begin{array}{ll}
  Y_{k,n}&={\alpha_n}Y_{k-1,n}+\eta_{k,n},\\
  \eta_{k,n}&={\beta_n}\eta_{k-1,n}+W_k,
  \end{array}
  \right.\quad k=1,2,\ldots,n,\;n\geq1.
  \end{align*}
  Putting
  \begin{align*}
  \theta_n^*=\frac{\theta_n+\rho_n}{1+\theta_n\rho_n},
  \quad&\rho_n^*=\theta_n\rho_n\theta_n^*,\quad
  \alpha_n^*=\frac{\alpha_n+\beta_n}{1+\alpha_n\beta_n},
  \quad\beta_n^*=\alpha_n\beta_n\alpha_n^*,\\
  \hat{\theta}_n=&\frac{\sum_{k=1}^n{X_{k,n} X_{k-1,n}}}{\sum_{k=1}^n{X_{k-1,n}^2}},\quad \hat{\alpha}_n=\frac{\sum_{k=1}^n{Y_{k,n} Y_{k-1,n}}}{\sum_{k=1}^n{Y_{k-1,n}^2}},\\
  \hat{\rho}_n=&\frac{\sum_{k=1}^n{\hat{\varepsilon}_{k,n}
\hat{\varepsilon}_{k-1,n}}}{\sum_{k=1}^n{\hat{\varepsilon}_{k-1,n}}^2},\quad
  \hat{\beta}_n=\frac{\sum_{k=1}^n{\hat{\eta}_{k,n}
\hat{\eta}_{k-1,n}}}{\sum_{k=1}^n{\hat{\eta}_{k-1,n}}^2},
  \end{align*}
  where $\hat{\eta}_{k,n}=Y_{k,n}-\hat{\alpha}_nY_{k-1,n}$, we know that
  \[
  \theta_n^*=-\alpha_n^*, \quad \rho_n^*=-\beta_n^*,
  \]
  and
  \[
  \hat{\alpha}_n=-\hat{\theta}_n, \quad \hat{\beta}_n=-\hat{\rho}_n.
  \]
  Given $\kappa_n=o(n)$ and $\kappa_n\to\infty$, if let $\theta_n=-1+\frac{\gamma_1}{\kappa_n},\rho_n=-1+\frac{\gamma_2}{\kappa_n}$ for some $\gamma_1>0,\gamma_2>0$ and assume the conditions (H-I) and (C-L($b_n$)) are satisfied, then Theorem \ref{MDP-theta-rho} holds for the least squares estimators, $\hat{\alpha}_n$ and $\hat{\beta}_n$, hence also holds for $\hat{\theta}_n$ and $\hat{\rho}_n$. By the same procedure we can show that Theorem \ref{MDP-theta-rho-II} holds under the case, $\theta_n\to-1,\rho_n\to1$, both within the stationary regions, if the distribution of $V_1$ is symmetric and the conditions (H-II) and (C-L($\lambda_n$)) are satisfied.
  \vskip5pt

    \item Theorem \ref{MDP-theta-rho}, Theorem \ref{MDP-theta-rho-II} and Corollary \ref{mdp-DW} {\it match} the following standard stationary moderate deviations for the {\it time-invariant} model (i.e. the parameters $\theta_n,\rho_n$ in the model (\ref{model}) do not depend on $n$) developed in \cite{Penda},
       \begin{equation}\label{Penda-old}
       \aligned
       \left\{\begin{array}{lll}
       \frac{1}{b_n^2}\log P\left(\frac{\sqrt{n}}{b_n}
       |\hat{\theta}_n-\theta^*|\geq x\right)\thicksim-\frac{x^2}{2\sigma_\theta^2},\\
       \frac{1}{b_n^2}\log P\left(\frac{\sqrt{n}}{b_n}
       |\hat{\rho}_n-\rho^*|\geq x\right)\thicksim-\frac{x^2}{2\sigma_\rho^2},\\
       \frac{1}{b_n^2}\log P\left(\frac{\sqrt{n}}{b_n}
       |\hat{d}_n-d^*|\geq x\right)\thicksim-\frac{x^2}{2\sigma_d^2},
       \end{array}\right.
       \endaligned
       \end{equation}
  where, $\sigma_d^2=4\sigma_\rho^2$ and
  \begin{equation}\label{Penda-old-1}
  \aligned
  \left\{\begin{array}{ll}
  \sigma_\theta^2&=\frac{(1-\theta^2)(1-\theta\rho)(1-\rho^2)}{(1+\theta\rho)^3},\\
  \sigma_\rho^2&=\frac{(1-\theta\rho)
  ((\theta+\rho)^2(1+\theta\rho)^2+\theta^2\rho^2(1-\theta^2)(1-\rho^2))}
  {(1+\theta\rho)^3}.
  \end{array}\right.
  \endaligned
  \end{equation}
 Notice that we omit the subscripts $n$ of $\theta^*,\rho^*$ and $d^*$ because in this case they do not depend on $n$. Indeed, substituting $1-\theta_n^2=-\frac{2\gamma_1}{\kappa_n}(1+o(1))$,  $1-\rho_n^2=-\frac{2\gamma_2}{\kappa_n}(1+o(1))$, and $\theta_{n}^*,\rho_n^*,d_n^*$ into the above equations (\ref{Penda-old-1}) and (\ref{Penda-old}), we can get the asymptotic approximation
  $$
  \left\{\begin{array}{ll}
\sigma_\theta^2\thicksim-\frac{\gamma_1\gamma_2(\gamma_1+\gamma_2)}{2\kappa_n^3},
\quad\sigma_\rho^2\thicksim-\frac{2(\gamma_1+\gamma_2)}{\kappa_n},
\quad \sigma_d^2\thicksim-\frac{8(\gamma_1+\gamma_2)}{\kappa_n},& \textrm{Case~I}\\
\sigma_\theta^2\thicksim-\frac{8\kappa_n\gamma_1\gamma_2}{(\gamma_1+\gamma_2)^3},
\quad\sigma_\rho^2\thicksim-\frac{8\kappa_n\gamma_1\gamma_2}{(\gamma_1+\gamma_2)^3},
\quad\sigma_d^2\thicksim-\frac{32\kappa_n\gamma_1\gamma_2}{(\gamma_1+\gamma_2)^3},& \textrm{Case~II}\\
\end{array}\right.,
  $$
which just correspond to our results.
  \vskip5pt

  \item
  Since $|\theta_n|\to1$ and $|\rho_n|\to1$, are both within the stationary regions, our results may provide a bridge between those for local to unity processes and those that apply under stationarity. Given $\gamma_1,\gamma_2\in(-1,0)$, let $\kappa_n=n^\delta$ for some $\delta\in(0,1)$,
      %then, for the asymptotic normality, Theorem \ref{MDP-theta-rho} %yields the convergence rates %$n^{\frac{1}{2}+\frac{3\alpha}{2}}$ and %$n^{\frac{1}{2}+\frac{\alpha}{2}}$ for the autoregressive %coefficients $\theta_n$ and $\rho_n$, respectively. Using this %parametrization, $\theta_n\to1$ and $\rho_n\to1$ As %$\alpha\to0$, $\theta_n\to1-\gamma_1$ and %$\rho_n\to1-\gamma_2$.
      then, under (Case I), by Theorem \ref{MDP-theta-rho},
      Corollary \ref{mdp-DW}, and letting $\delta\to0$, we have
  %\begin{align}
  %\frac{1}{b_n^2}\log P\left(\frac{n^{(1+3\delta)/2}}{b_n}
  %|\widehat{\theta}_n-\theta_n^*|\geq %x\right)\thicksim\frac{x^2}{\gamma_1\gamma_2(\gamma_1+\gamma_2)},
  %\end{align}
  %\begin{align}
  %\frac{1}{b_n^2}\log P\left(\frac{n^{(1+\delta)/2}}{b_n}
  %|\widehat{\rho}_n-\rho_n^*|\geq %x\right)\thicksim\frac{x^2}{4(\gamma_1+\gamma_2)},
  %\end{align}
  %and
  %\begin{align}
  %\frac{1}{b_n^2}\log P\left(\frac{n^{(1+\delta)/2}}{b_n}
  %|\widehat{d}_n-d_n^*|\geq %x\right)\thicksim\frac{x^2}{16(\gamma_1+\gamma_2)}.
  %\end{align}
  %As $\delta\to0$, the above equations turns to be
  \begin{equation}\label{simulation1}
  \aligned
  \left\{\begin{array}{lll}
  \frac{1}{b_n^2}\log P\left(\frac{\sqrt{n}}{b_n}
  |\hat{\theta}_n-\theta_n^*|\geq x\right)\thicksim\frac{x^2}{\gamma_1\gamma_2(\gamma_1+\gamma_2)},\\
  \frac{1}{b_n^2}\log P\left(\frac{\sqrt{n}}{b_n}
  |\hat{\rho}_n-\rho_n^*|\geq x\right)\thicksim\frac{x^2}{4(\gamma_1+\gamma_2)},\\
  \frac{1}{b_n^2}\log P\left(\frac{\sqrt{n}}{b_n}
  |\hat{d}_n-d_n^*|\geq x\right)\thicksim\frac{x^2}{16(\gamma_1+\gamma_2)},
  \end{array}\right.
  \endaligned
  \end{equation}
  however, the correct results for the {\it time-invariant} model when $\theta=1+\gamma_1$ and $\rho=1+\gamma_2$ (stationary case), should be
  \begin{equation}\label{simulation2}
  \aligned
  \left\{\begin{array}{lll}
  \frac{1}{b_n^2}\log P\left(\frac{\sqrt{n}}{b_n}
  |\hat{\theta}_n-\theta^*|\geq x\right)\thicksim-\frac{x^2}{2\sigma_{\theta}^2},\\
  \frac{1}{b_n^2}\log P\left(\frac{\sqrt{n}}{b_n}
  |\hat{\rho}_n-\rho^*|\geq x\right)\thicksim-\frac{x^2}{2\sigma_{\rho}^2},\\
  \frac{1}{b_n^2}\log P\left(\frac{\sqrt{n}}{b_n}
  |\hat{d}_n-d^*|\geq x\right)\thicksim-\frac{x^2}{8\sigma_{\rho}^2},
  \end{array}\right.
  \endaligned
  \end{equation}
  where $\sigma_\theta^2, \sigma_\rho^2$ and $\sigma_d^2$ are defined as in (\ref{Penda-old-1}). That is to say, the {\it time-varying} coefficients in the model (\ref{model}) change the variances of $\hat{\theta}_n$ and $\hat{\rho}_n$. One can refer to  \cite{Penda} for more details. Similar phenomena also appear under (Case II). In particular, if $\theta_n=-\rho_n$, i.e. $\gamma_1=\gamma_2=:\gamma$, by the same procedures we can get
  \begin{equation}
  \aligned
  \left\{\begin{array}{ll}
  \frac{1}{\lambda_n^2}\log P\left(\frac{\sqrt{n}}{\lambda_n}
  |\hat{\theta}_n-\theta_n^*|\geq x\right)\thicksim\frac{\gamma x^2}{2},
  \\
  \frac{1}{\lambda_n^2}\log P\left(\frac{\sqrt{n}}{\lambda_n}
  |\hat{\rho}_n-\rho_n^*|\geq x\right)\thicksim\frac{\gamma x^2}{2}.
  %\frac{1}{\lambda_n^2}\log P\left(\frac{\sqrt{n}}{\lambda_n}
  %|\hat{d}_n-d_n^*|\geq x\right)\thicksim\frac{\gamma x^2}{8}.
  \end{array}\right.
  \endaligned
  \end{equation}
  However, here the correct variances for the {\it time-invariant} model should be
  \begin{align}
  \sigma^2_{\theta}=-\frac{\gamma^2+2\gamma+2}{\gamma^2+2\gamma},\qquad
  \sigma^2_{\rho}=-\frac{(1+\gamma)^4(\gamma^2+2\gamma+2)}{\gamma^2+2\gamma}.
  \end{align}
  %which are smaller than $-1/\gamma$, i.e. the variation of coefficients of the model (\ref{model}) causes overestimates on the variances of $\hat{\theta}_n$ %and $\hat{\rho}_n$.
  %\vskip5pt

  To illustrate above findings, we do a small simulation study on moderate deviation principles for $\hat{\theta}_n$ and $\hat{\rho}_n$ under (Case I), i.e. Theorem \ref{MDP-theta-rho}. Assume the noise $(V_k)_{k\geq1}$ is a sequence of i.i.d. standard normal random variables or uniform random variables on $(-1,1)$. Given $\gamma_1=-0.9, \gamma_2=-0.8$, let $b_n=(\log n)^3$ and $\kappa_n=n^\delta$ with $\delta=0.01$. We take the sample size $n=100, 200$ and repeat $1000$ times. The {\it red} and {\it green curves} in the figures below denote the parabolic curves on the right-side of equations (\ref{simulation1}) and (\ref{simulation2}) respectively, and the {\it blue points} denote the simulation curves of the following functions,
  \begin{align*}
  y_\theta(x):=\frac{1}{b_n^2}\log P\Big(\frac{\sqrt{n\kappa_n^3}}{b_n}
  |\hat{\theta}_n-\theta_n^*|\geq x\Big)
  \end{align*}
  and
  \begin{align*}
  y_\rho(x):=\frac{1}{b_n^2}\log P\Big(\frac{\sqrt{n\kappa_n}}{b_n}
  |\hat{\rho}_n-\rho_n^*|\geq x\Big),
  \end{align*}
  respectively. From the simulations, we can see that, the time-varying coefficients in the model (\ref{model}) underestimate the variance of $\hat{\theta}_n$ but overestimate the variance of $\hat{\rho}_n$. Finally, we remark that the simulation results about $\hat{\theta}_n$ are very good, while the simulation results about $\hat{\rho}_n$ are not so satisfactory which may be caused by the plug-in method when estimating $\rho_n$, i.e. (\ref{rho}).

\begin{figure}[H]
%\subfloat[The near-stationary case]
%\begin{minipage}[t]{0.5\linewidth}
\begin{center}
\includegraphics[width=5cm,height=5cm]{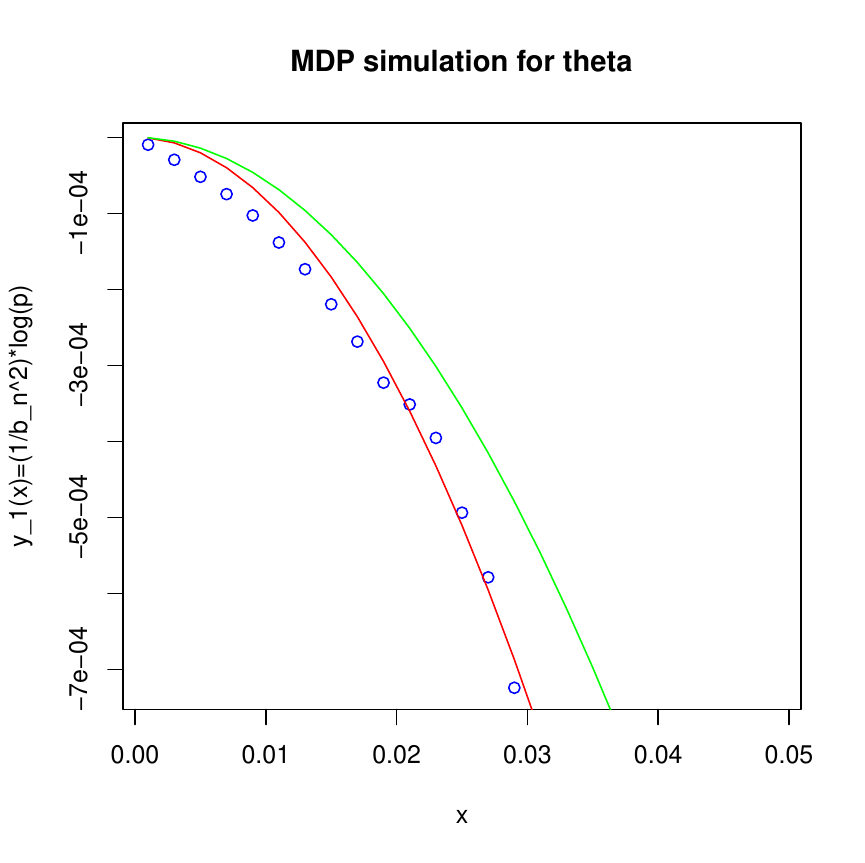}\;\;\includegraphics[width=5cm,height=5cm]{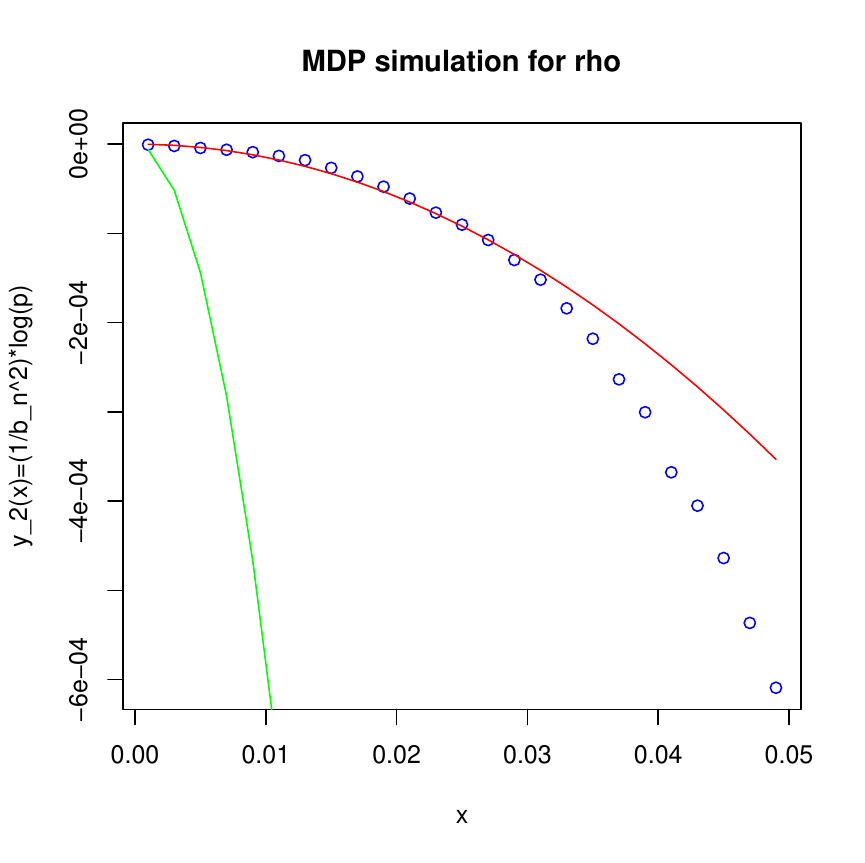}
\end{center}
\caption{\small{Sample size $n=100$ and $V_1\sim\mathscr{N}(0,1)$.}}
\end{figure}

\begin{figure}[H]
\begin{center}
\includegraphics[width=5cm,height=5cm]{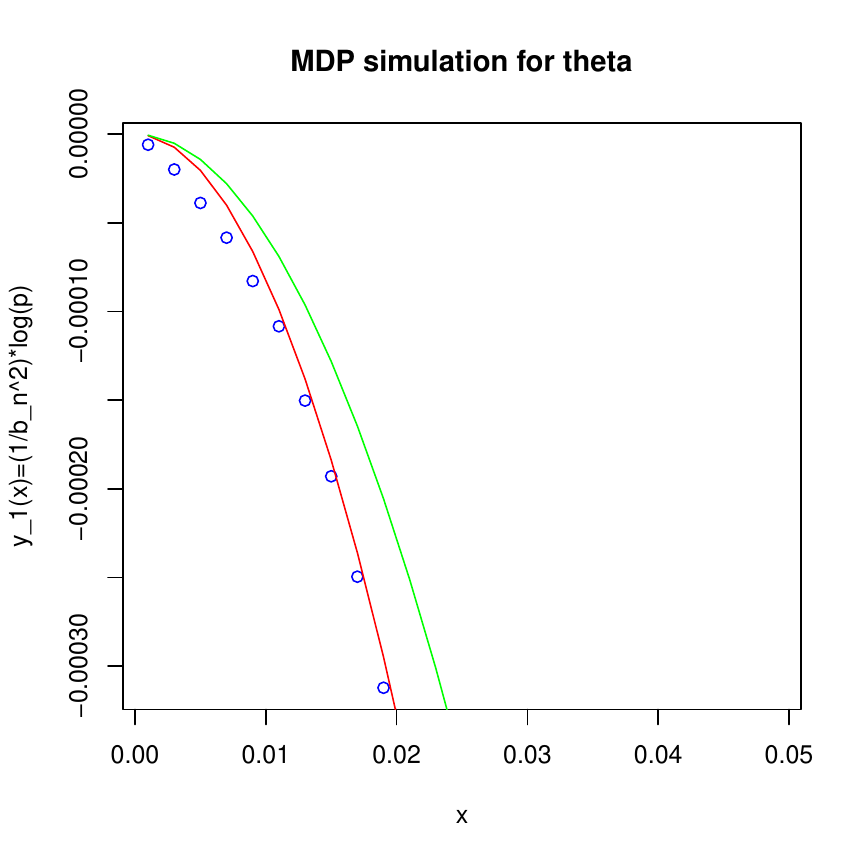}\;\;\includegraphics[width=5cm,height=5cm]{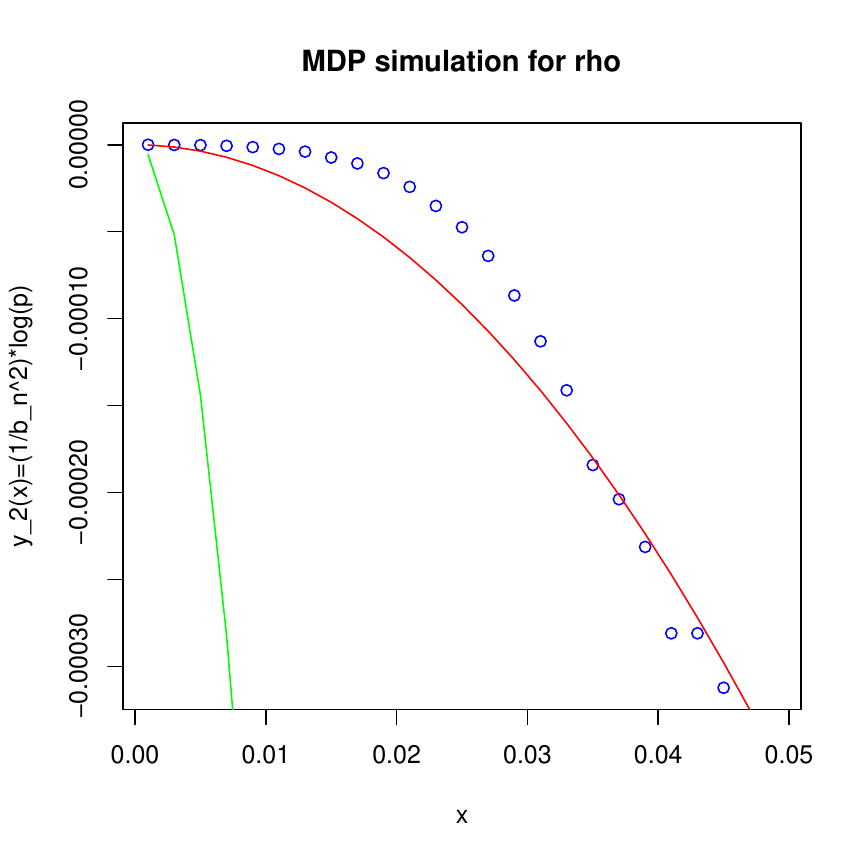}
\end{center}
\caption{\small{Sample size $n=200$ and $V_1\sim\mathscr{N}(0,1)$.}}
\end{figure}

\begin{figure}[H]
\begin{center}
\includegraphics[width=5cm,height=5cm]{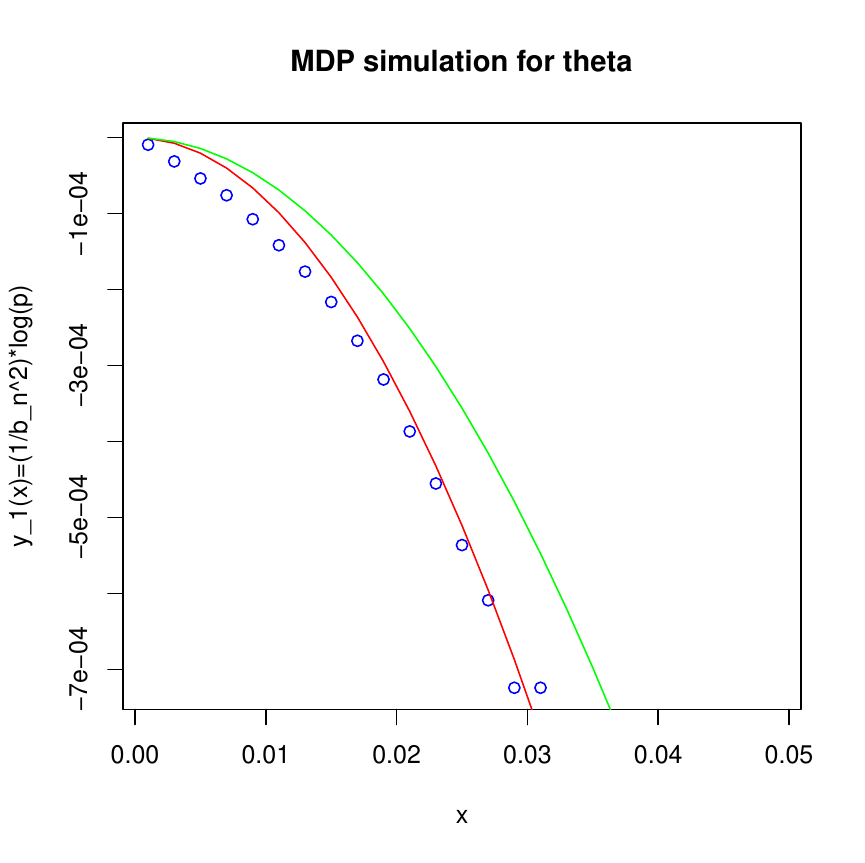}\;\;\includegraphics[width=5cm,height=5cm]{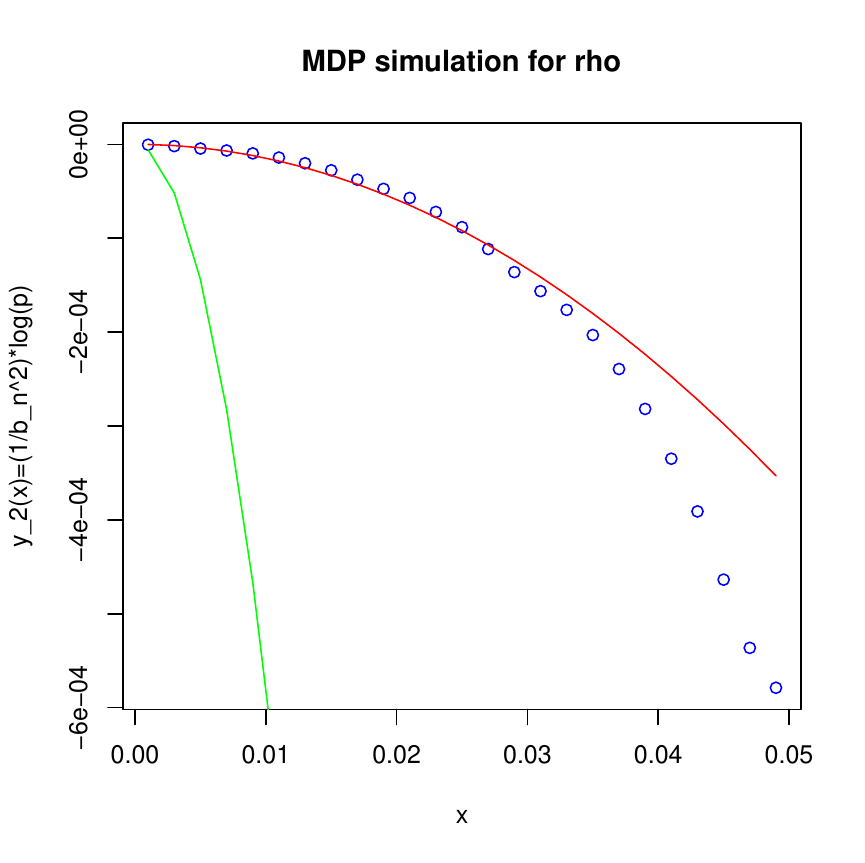}
\end{center}
\caption{\small{Sample size $n=100$ and $V_1\sim U(-1,1)$.}}
\end{figure}

\begin{figure}[H]
\begin{center}
\includegraphics[width=5cm,height=5cm]{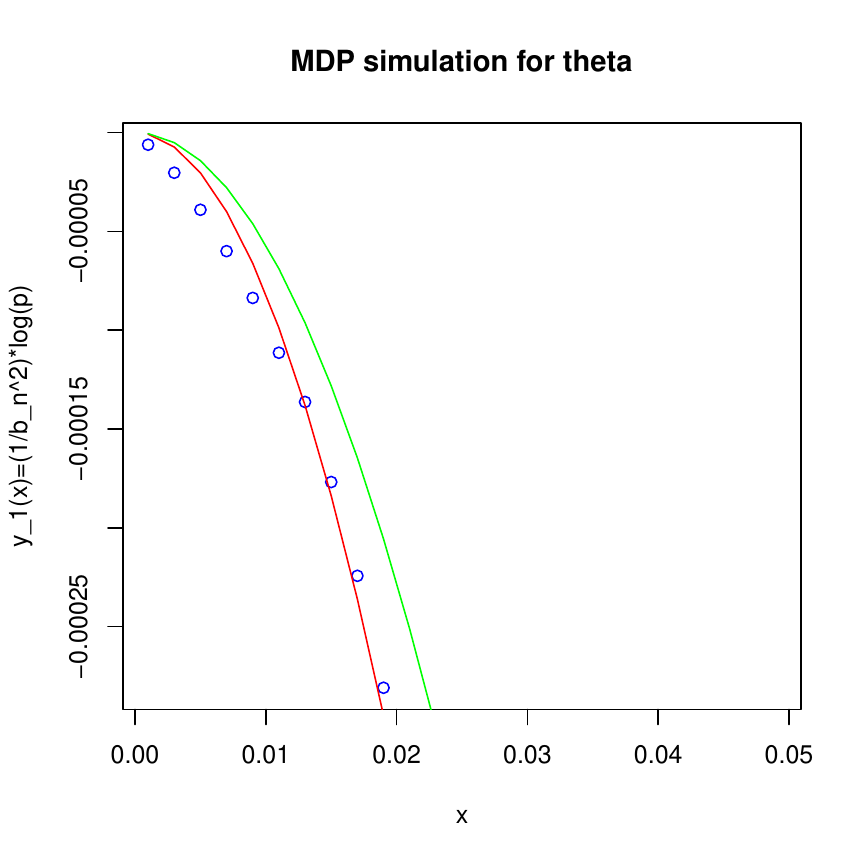}\;\;\includegraphics[width=5cm,height=5cm]{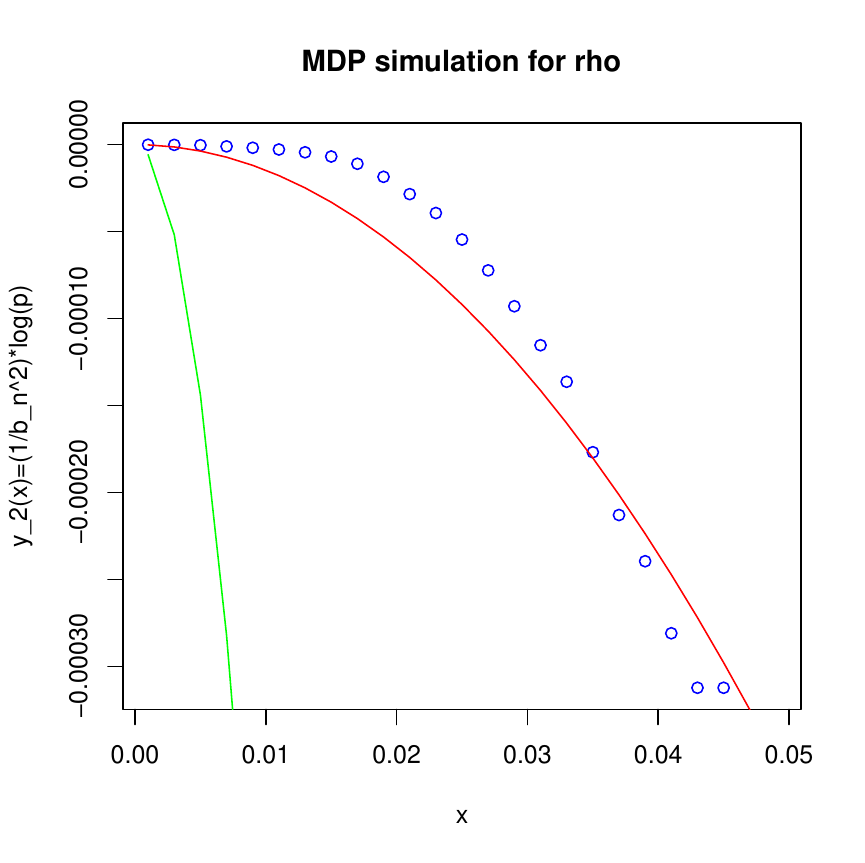}
\end{center}
\caption{\small{Sample size $n=200$ and $V_1\sim U(-1,1)$.}}
\end{figure}

\end{enumerate}

%%%%%%%%%%%%%%%%%%%%%%%%%%%%%%%%%%%%%%%%%%%%%%%%%%%%%%%%%%%%%%%%%%%%%%%%%

%%%%%%%%%%%%%%%%%%%%%%%%%%%%%%%%%%%%%%%%%%%%%%%%%%%%%%%%%%%%%%%%%%%%%%%%%%

\section{Moderate deviations for $\hat{\theta}_n, \hat{\rho}_n$ and $\hat{d}_n$}\label{sec4}

This section is devoted to the proofs of moderate deviations for the estimators $\hat{\theta}_n, \hat{\rho}_n$ and $\hat{d}_n$ under
(Case~I) and (Case~II).

\subsection{Decomposition of estimators}

Based on the ideas in Bercu \& Pro\"{i}a \cite{Bercu-2013} and Bitseki Penda {\it et al.} \cite{Penda}, in order to prove our main results,
 we first deal with the decompositions of $\hat{\theta}_n-\theta_n^*$ and $\hat{\rho}_n-\rho_n^*$.

Now, for all $1\leq l\leq n$, let
\begin{align}\label{martingales}
M_{l,n}=\sum_{k=1}^l{X_{k-1,n}V_k},
\quad N_{l,n}=\sum_{k=2}^l{X_{k-2,n}V_k},\quad U_{l,n}=\sum_{k=1}^l{\varepsilon_{k-1,n}V_k},
\end{align}
and
\begin{equation}\label{variance-covariance}
P_{l,n}=\sum_{k=1}^l{X_{k,n}X_{k-1,n}},\; Q_{l,n}=\sum_{k=1}^lX_{k,n}\varepsilon_{k,n},\; S_{l,n}=\sum_{k=1}^lX_{k,n}^2,\;T_{l,n}=\sum_{k=1}^l\varepsilon_{k,n}^2.
\end{equation}
In addition, denote $M_n:=M_{n,n}$, and the same definitions for $N_n$, $U_n$, $P_n$, $Q_n$, $S_n$ and $T_n$. Then, from (\ref{model}), (\ref{theta}), and (\ref{thetastar}), it follows that
\begin{equation}\label{theta-decom}
\hat{\theta}_n-\theta^*_n=\frac{1}{1+\theta_n\rho_n}\cdot\frac{M_n}{S_{n-1,n}}
+\frac{\theta_n\rho_n}{1+\theta_n\rho_n}\cdot\frac{X_{n,n}X_{n-1,n}}{S_{n-1,n}}.
\end{equation}
As for the decomposition of $\hat{\rho}_n-\rho_n^*$, we need more notations. For $1\leq l\leq n$, let
\begin{align}\label{notation4}
I_{l,n}=\sum_{k=1}^l{\hat{\varepsilon}_{k,n}\hat{\varepsilon}_{k-1,n}},\quad J_{l,n}=\sum_{k=1}^l{\hat\varepsilon_{k,n}^2},
\quad I_{n,n}:=I_n,\quad J_{n,n}:=J_n.
\end{align}
By simple calculations, we have
\begin{align}\label{fenjierho}
&J_{n-1,n}(\hat{\rho}_n-\rho^*_n)\nonumber\\
&=\left(\frac{1+\theta^*_n\rho^*_n}{1+\theta_n\rho_n}-\frac{\theta_n^*}{\theta_n}\right)M_n
+\frac{\theta^*_n}{\theta_n}U_n-(\hat{\theta}_n-\theta^*_n)H_n+\xi_n^I-\rho^*_n\xi_n^J,
\end{align}
where
\begin{equation}\label{xiIJPQ}
\aligned
&H_n=F_n-\rho^*_nG_n,\quad F_n=S_n+W_n-(\hat{\theta}_n+\theta^*_n)P_n,
\quad G_n=2P_n-(\hat{\theta}_n+\theta^*_n)S_n,\\
&W_n=\sum_{k=2}^n{X_{k,n}X_{k-2,n}},
\quad\xi_n^I=\hat{\theta}_nX_{n,n}^2-\hat{\theta}_n^2X_{n,n}X_{n-1,n}+\frac{1+(\theta^*_n)^2}{1+\theta_n\rho_n}\xi_n^P-\theta^*_n\xi_n^Q,\\
&\xi_n^J=-X_{n,n}^2+2\hat{\theta}_nX_{n,n}X_{n-1,n}-\hat{\theta}_n^2(X_{n,n}^2+X_{n-1,n}^2)-\frac{2\theta^*_n}{1+\theta_n\rho_n}\xi_n^P,\\
&\xi_n^P=\theta_n\rho_nX_{n}X_{n-1,n}-(\theta_n+\rho_n)X_{n,n}^2,\\
&\xi_n^Q=\theta^*_n\xi_n^P-(\theta_n+\rho_n)X_{n,n}X_{n-1,n}+\theta_n\rho_n(X_{n,n}^2+X_{n-1,n}^2).
\endaligned
\end{equation}
Then, we can write that
\begin{equation}\label{decom-theta-rho}
\left\{\begin{array}{ll}
\frac{1}{b_n}\begin{pmatrix}\sqrt{n\kappa_n^3}(\hat{\theta}_n-\theta^*_n)\\\sqrt{n\kappa_n}(\hat{\rho}_n-\rho^*_n)\end{pmatrix}
=\frac{1}{b_n\sqrt{n\kappa_n}}Y_nZ_n+\frac{1}{b_n}R_{n}(\theta,\rho),& \textrm{Case~I}\\
\frac{1}{\lambda_n}\begin{pmatrix}\sqrt{n/\kappa_n}(\hat{\theta}_n-\theta^*_n)\\\sqrt{n/\kappa_n}(\hat{\rho}_n-\rho^*_n)\end{pmatrix}
=\frac{1}{\lambda_n\sqrt{n\kappa_n}}\tilde{Y}_nM_n+\frac{1}{\lambda_n}\tilde{R}_{n}(\theta,\rho),& \textrm{Case~II}\\
\end{array}\right.,
\end{equation}
where, for all $1\leq l\leq n$,
\begin{align} Z_{l,n}=\begin{pmatrix}\frac{M_{l,n}}{\kappa_n}\\\\U_{l,n}\end{pmatrix},
\qquad Z_{n,n}:=Z_n,
\end{align}
\begin{equation}\label{def-Y-Z}
Y_n=\begin{pmatrix}\frac{n\kappa_n^3}{(1+\theta_n\rho_n)S_{n-1,n}}&0\\\\
\frac{n\kappa_n^2}{J_{n-1,n}}\Big(\frac{1+\theta_n^*\rho_n^*}{1+\theta_n\rho_n}
-\frac{\theta_n^*}{\theta_n}\Big)&
\frac{n\kappa_n\theta^*_n}{\theta_nJ_{n-1,n}}\end{pmatrix},
\end{equation}
\begin{equation}\label{def-tildeY-tildeZ}
\tilde{Y}_n=\begin{pmatrix}\frac{n}{(1+\theta_n\rho_n)S_{n-1,n}}\\\\
\frac{n}{J_{n-1,n}}\Big(\frac{1+\theta^*_n\rho^*_n}{1+\theta_n\rho_n}-\frac{\theta_n^*}{\theta_n}-\frac{H_n}{S_{n-1,n}(1+\theta_n\rho_n)}\Big)\end{pmatrix},
\end{equation}
\begin{equation}\label{def-theta-rho-remainder}
R_{n}(\theta,\rho)=\begin{pmatrix}\frac{\sqrt{n\kappa_n^3}\theta_n\rho_nX_{n,n}X_{n-1,n}}{(1+\theta_n\rho_n)S_{n-1,n}}\\\\
\frac{\sqrt{n\kappa_n}}{J_{n-1,n}}\left(-(\hat\theta_n-\theta^*_n)H_n+\xi_n^I-\rho^*_n\xi_n^J\right)\end{pmatrix},
\end{equation}
and
\begin{equation}\label{def-tildetheta-tilderho-remainder}
\tilde{R}_{n}(\theta,\rho)=\begin{pmatrix}\frac{\sqrt{n/\kappa_n}\theta_n\rho_nX_{n,n}X_{n-1,n}}{(1+\theta_n\rho_n)S_{n-1,n}}\\\\
\frac{\sqrt{n/\kappa_n}}{J_{n-1,n}}\left(\frac{\theta_n^*}{\theta_n}U_n-\frac{\theta_n\rho_n}{1+\theta_n\rho_n}\cdot\frac{X_{n,n}X_{n-1,n}}{S_{n-1,n}}H_n
+\xi_n^I-\rho^*_n\xi_n^J\right)\end{pmatrix}.
\end{equation}

%%%%%%%%%%%%%%%%%%%%%%%%%%%%%%%%%%%%%%%%%%%%%%%%%%%%%%%%%%%%%%%%%%%%%%%%%

\subsection{Martingales and  predictable quadratic variations}

For $1\leq l\leq n$, denote $\mathcal{F}_{l}=\sigma(V_1,...,V_l)$, then we know that, $(M_{l,n})_{1\leq l\leq n}$, $(N_{l,n})_{1\leq l\leq n}$, and $(U_{l,n})_{1\leq l\leq n}$ are all martingales with respect to the filtration $(\mathcal{F}_{l})_{1\leq l\leq n}$, with predictable quadratic variations
\begin{equation}\label{variation}
 \langle M_{\bullet,n}\rangle_l=\sigma^2S_{l-1,n}, \quad \langle N_{\bullet,n}\rangle_l=\sigma^2S_{l-2,n}, \quad \langle U_{\bullet,n}\rangle_l=\sigma^2T_{l-1,n},
\end{equation}
and predictable quadratic covariations
\begin{equation}\label{covariation}
\langle M_{\bullet,n},N_{\bullet,n}\rangle_l=\sigma^2P_{l-1,n},\qquad \langle M_{\bullet,n},U_{\bullet,n}\rangle_l=\sigma^2Q_{l-1,n}.
\end{equation}

In the following two lemmas, we can show the exponential equivalence of $\frac{M_n}{n}$, $\frac{N_n}{n}$, $\frac{U_n}{n}$
and their predictable quadratic variations in the sense of moderate deviations.
The explicit proofs are postponed to Section 4.

\begin{lem}\label{lem-martingales-ldp}
Let $a_n=b_n$ in (Case~I) and $a_n=\lambda_n$ in (Case~II), we have
\vskip3pt
\noindent(a) for any $\delta>0$,
$$
\lim_{n\to\infty}\frac{1}{a_n^2}\log P\left(\frac{|M_n|}{n}>\delta\right)=-\infty,\quad
\lim_{n\to\infty}\frac{1}{a_n^2}\log P\left(\frac{|N_n|}{n}>\delta\right)=-\infty;
$$
\noindent(b) for any $\delta>0$,
$$
\lim_{n\to\infty}\frac{1}{a_n^2}\log P\left(\frac{|U_n|}{n}>\delta\right)=-\infty.
$$
\end{lem}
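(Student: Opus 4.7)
The plan is to prove (a) and (b) by the same template, applied once to each of the martingales $M_n$, $N_n$, $U_n$: truncate the innovations $V_k$, apply Freedman's martingale inequality to the truncated martingale on an event on which the predictable quadratic variation stays close to its mean, and dispose of the complementary event using (C-L$(a_n)$) together with the Gaussian integrability (\ref{exp-moment}). By (\ref{variation}) the three predictable variations are $\sigma^2 S_{n-1,n}$, $\sigma^2 S_{n-2,n}$ and $\sigma^2 T_{n-1,n}$, so the three computations are parallel; I describe the argument for $M_n$ and comment on the modifications for $N_n$, $U_n$ at the end.

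The first step is to control $S_{n-1,n}$ at a super-$a_n^2$ rate. Iterating (\ref{model}) gives $X_{k,n}=\sum_{i=1}^k c^{(n)}_{k,i}\,V_i$ with $c^{(n)}_{k,i}=(\theta_n^{k-i+1}-\rho_n^{k-i+1})/(\theta_n-\rho_n)$. In (Case~I) the geometric decay $|\theta_n|^m,|\rho_n|^m\le\exp(-cm/\kappa_n)$ yields $\sup_{i,k}|c^{(n)}_{k,i}|=O(\kappa_n)$ and $\sum_{k,i}(c^{(n)}_{k,i})^2=O(n\kappa_n^2)$, whence $E S_{n-1,n}=O(n\kappa_n^2)$; in (Case~II) the corresponding scale is $O(n\kappa_n)$. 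Writing $S_{n-1,n}=V^\top A_n V$ with $\|A_n\|_{\mathrm{op}}=O(\kappa_n^2)$ (respectively $O(\kappa_n)$), a Hanson--Wright-type estimate — valid after a mild re-truncation of the $V_k$ at the scale permitted by (\ref{exp-moment}) — gives
\begin{equation*}
P\bigl(S_{n-1,n}>2\,E S_{n-1,n}\bigr)\le \exp(-c\,n/\kappa_n^2),
\end{equation*}
which is $\exp(-\omega(a_n^2))$ by the first divergence condition in (H-I) (respectively (H-II)). A union bound combined with sub-Gaussian tail estimates likewise yields $P(\max_{1\le k\le n}|X_{k-1,n}|>\kappa_n\sqrt{\log n})=\exp(-\omega(a_n^2))$.

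In the second step, fix $\delta>0$ and pick a truncation level $h_n$ with $h_n^4\gg a_n\sqrt{n}$ (so (C-L$(a_n)$) controls the tail) and $h_n\kappa_n\sqrt{\log n}=o(\sqrt{n\kappa_n^2}/a_n)$ (so the linear term of Freedman is dominated by the quadratic one); (H-I)/(H-II) admit such a choice, e.g.\ $h_n=(a_n\sqrt{n})^{1/4}$. Set $\tilde V_k = V_k\mathbb{1}_{\{|V_k|\le h_n\}}-E[V_k\mathbb{1}_{\{|V_k|\le h_n\}}]$ and $\tilde M_n=\sum_{k=1}^n X_{k-1,n}\tilde V_k$. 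On the event $\Omega_n=\{S_{n-1,n}\le Cn\kappa_n^2,\ \max_k|X_{k-1,n}|\le\kappa_n\sqrt{\log n}\}$ supplied by Step~1, $\tilde M$ has differences bounded by $2h_n\kappa_n\sqrt{\log n}$ and predictable variation $\le\sigma^2 Cn\kappa_n^2$, so Freedman's inequality gives
\begin{equation*}
P\bigl(|\tilde M_n|>\tfrac{\delta n}{2},\,\Omega_n\bigr)\le 2\exp\!\left(-\frac{(\delta n/2)^2}{2\sigma^2 Cn\kappa_n^2+\tfrac{2}{3}h_n\kappa_n\sqrt{\log n}\,\delta n}\right)=\exp(-\omega(a_n^2)).
\end{equation*}
The complementary event $\{\exists k\le n:|V_k|>h_n\}$ has probability $\le n\,P(|V_1|>h_n)=\exp(-\omega(a_n^2))$ by (C-L$(a_n)$), and the centering bias $\bigl|\sum_k X_{k-1,n}\bigr|\cdot\bigl|E[V_1\mathbb{1}_{\{|V_1|>h_n\}}]\bigr|$ is negligible by (\ref{exp-moment}). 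Combining these three pieces proves (a) for $M_n$; the statement for $N_n$ is identical with $S_{n-2,n}$ in place of $S_{n-1,n}$, and part (b) is identical with $\varepsilon_{k-1,n}$ and $T_{n-1,n}$ replacing $X_{k-1,n}$ and $S_{n-1,n}$, the strictly smaller scale $E T_{n-1,n}=O(n\kappa_n)$ making the estimates only sharper.

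The main obstacle is the super-$a_n^2$ concentration of the quadratic form $S_{n-1,n}$ in Step~1: since (\ref{exp-moment}) is weaker than a sub-Gaussian tail on $V_1$, one cannot invoke Hanson--Wright directly, and must either re-truncate $V_k$ inside the quadratic form (treating the error again via (C-L$(a_n)$)) or estimate the Laplace transform of $V^\top A_n V$ by hand using $E\exp(t_0 V_1^2)<\infty$. Once these exponents are available, verifying that every term produced by Freedman and by the truncation beats $a_n^2$ uses simultaneously the three divergences appearing in (H-I) (respectively (H-II)) — which are tuned precisely for this purpose — and is the bookkeeping-heavy heart of the proof.
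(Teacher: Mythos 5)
Your strategy (truncate the innovations, run Freedman on the truncated martingale over a good event, and kill the bad event with (C-L$(a_n)$) and \eqref{exp-moment}) is a legitimate alternative, but it is genuinely different from, and considerably heavier than, what the paper does. The paper invokes the Bercu--Touati self-normalized inequality $P(|M_n|>x,\ \langle M_{\bullet,n}\rangle_n+[M_{\bullet,n}]_n\le y)\le 2e^{-x^2/(2y)}$, which needs no bounded increments and hence no truncation at all; it then bounds $\langle M_{\bullet,n}\rangle_n$ and $[M_{\bullet,n}]_n$ by $C\kappa_n^4 L_n$ and $C\kappa_n^4\Lambda_n$ via the purely algebraic inequality \eqref{Sn-Vn}, and disposes of the event $\{\langle M_{\bullet,n}\rangle_n+[M_{\bullet,n}]_n>n\kappa_n^5\}$ using the Eichelsbacher--L\"owe MDP for $L_n$ and $\Lambda_n$ (Lemma \ref{mdp-V}, which is where Chen--Ledoux enters). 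The threshold $n\kappa_n^5$ then yields the exponent $\delta^2 n/(2\kappa_n^5)\gg a_n^2$ directly from (H-I)/(H-II). This completely sidesteps your Hanson--Wright step and the Freedman bookkeeping.

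There is, moreover, a concrete error in your Step 1 that must be repaired before the rest can stand. In (Case I) one has $E X_{k,n}^2\asymp \sigma^2(1+\theta_n\rho_n)/\bigl((1-\theta_n\rho_n)(1-\theta_n^2)(1-\rho_n^2)\bigr)\asymp\kappa_n^3$, so $E S_{n-1,n}\asymp n\kappa_n^3$, not $O(n\kappa_n^2)$ — this is exactly the normalization in Lemma \ref{lem-quar-coquar-ldp}(b), where $S_n/(n\kappa_n^3)$ converges. With your stated value the event $\{S_{n-1,n}\le 2\,E S_{n-1,n}\}$ has probability tending to zero, not one, so the claimed bound $P(S_{n-1,n}>2ES_{n-1,n})\le\exp(-cn/\kappa_n^2)$ is false as written; likewise $\max_k|X_{k,n}|\le\kappa_n\sqrt{\log n}$ is off by at least a factor $\kappa_n^{1/2}$ (the natural high-probability scale is $\kappa_n^{3/2}\sqrt{\log n+a_n^2}$, or the cruder $\kappa_n^2\max_k|V_k|$ from \eqref{max-X-Varepsilon}). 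The slips are repairable: replacing $n\kappa_n^2$ by $n\kappa_n^3$ throughout turns the Freedman exponent into $\asymp n/\kappa_n^3$, which still dominates $a_n^2$ because (H-I) even gives $n/(b_n^2\kappa_n^5)\to\infty$, and the linear (increment) term can also be checked to survive under the $\log^2 n$ condition. But as written, Step 1 asserts a concentration statement around the wrong centering, and every constant downstream inherits that error, so the proof does not go through without this correction.
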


\begin{lem}\label{lem-quar-coquar-ldp}
We have
\vskip3pt
\noindent(a) for any $\delta>0$,
$$
\left\{\begin{array}{ll}
\lim_{n\to\infty}\frac{1}{b_n^2}\log P\left(\frac{\max_{1\leq k\leq n}X_{k,n}^2}{b_n\sqrt{n\kappa_n^3}}>\delta\right)=-\infty,& \rm{Case~I}\\
\lim_{n\to\infty}\frac{1}{\lambda_n^2}\log P\left(\frac{\max_{1\leq k\leq n}X_{k,n}^2}{\lambda_n\sqrt{n\kappa_n}}>\delta\right)=-\infty,& \rm{Case~II}\\
\end{array}\right.;
$$
\vskip5pt

\noindent(b) for any $\delta>0$,
$$
\left\{\begin{array}{ll}
\lim_{n\to\infty}\frac{1}{b_n^2}\log P\left(\left|\frac{S_n}{n\kappa_n^3}+\frac{\sigma^2}
{2\gamma_1\gamma_2(\gamma_1+\gamma_2)}\right|>\delta\right)=-\infty,& \rm{Case~I}\\
\lim_{n\to\infty}\frac{1}{\lambda_n^2}\log P\left(\left|\frac{S_n}{n\kappa_n}+\frac{(\gamma_1+\gamma_2)\sigma^2}{8\gamma_1\gamma_2}\right|
>\delta\right)=-\infty,& \rm{Case~II}\\
\end{array}\right.,
$$
and
$$
\left\{\begin{array}{ll}
\lim_{n\to\infty}\frac{1}{b_n^2}\log P\left(\left|\frac{P_n}{n\kappa_n^3}
+\frac{\sigma^2}{2\gamma_1\gamma_2(\gamma_1+\gamma_2)}\right|
>\delta\right)=-\infty,& \rm{Case~I}\\
\lim_{n\to\infty}\frac{1}{\lambda_n^2}\log P\left(\left|\frac{P_n}{n\kappa_n}
-\frac{(\gamma_1-\gamma_2)\sigma^2}{8\gamma_1\gamma_2}\right|
>\delta\right)=-\infty,& \rm{Case~II}\\
\end{array}\right.;
$$
\vskip5pt

\noindent(c) let $a_n=b_n$ in (Case~I) and $a_n=\lambda_n$ in (Case~II), for any $\delta>0$,
$$
\lim_{n\to\infty}\frac{1}{a_n^2}\log P\left(\left|\frac{T_n}{n\kappa_n}
+\frac{\sigma^2}{2\gamma_2}\right|>\delta\right)=-\infty;
$$
\vskip5pt

\noindent(d) for any $\delta>0$,
$$
\left\{\begin{array}{ll}
\lim_{n\to\infty}\frac{1}{b_n^2}\log P\left(\left|\frac{Q_n}{n\kappa_n^2}
-\frac{\sigma^2}{2\gamma_2(\gamma_1+\gamma_2)}\right|>\delta\right)=-\infty,& \rm{Case~I}\\
\lim_{n\to\infty}\frac{1}{\lambda_n^2}\log P\left(\left|\frac{Q_n}{n\kappa_n}
+\frac{\sigma^2}{4\gamma_2}\right|>\delta\right)=-\infty,& \rm{Case~II}\\
\end{array}\right..
$$
\end{lem}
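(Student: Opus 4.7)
The plan is to reduce everything to Lemma~\ref{mdp-V} ($L_n\sim n\sigma^2$ in the MDP sense), Lemma~\ref{lem-martingales-ldp} ($M_n/n, U_n/n\to 0$ in the MDP sense), and a boundary estimate, using two tools: the moving-average representation $X_{k,n}=\sum_{j=1}^k a_{k-j}(n)V_j$ with $a_m(n)=(\theta_n^{m+1}-\rho_n^{m+1})/(\theta_n-\rho_n)$ --- so that the Gaussian-integrability assumption (\ref{exp-moment}) makes each $X_{k,n}$ sub-Gaussian with variance proxy at most $C\sum_{m\geq0}a_m(n)^2$ --- together with the partial-fraction computation $\sum_{m\geq 0}a_m(n)^2\sim -\kappa_n^3/[2\gamma_1\gamma_2(\gamma_1+\gamma_2)]$ in Case~I and $\sim -(\gamma_1+\gamma_2)\kappa_n/[8\gamma_1\gamma_2]$ in Case~II.

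For part (a), the sub-Gaussian tail $P(X_{k,n}^2>s)\leq 2\exp(-cs/\kappa_n^3)$ (Case~I) or $2\exp(-cs/\kappa_n)$ (Case~II), union-bounded over $k\leq n$ at $s=\delta b_n\sqrt{n\kappa_n^3}$ (resp.\ $\delta\lambda_n\sqrt{n\kappa_n}$), gives a bound of the form $2n\exp(-c\delta b_n\sqrt{n/\kappa_n^3})$; the hypotheses $n/(b_n^2\kappa_n^5)\to\infty$ and $nb_n^2/(\kappa_n^5\log^2 n)\to\infty$ from (H-I) (and their analogues in (H-II)) then drive the log of this bound divided by $b_n^2$ to $-\infty$. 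The same sub-Gaussian argument gives MDP-negligibility of the boundary quantities $X_{n,n}^2/n$, $\varepsilon_{n,n}^2/n$, and $X_{n,n}\varepsilon_{n,n}/n$, which I use throughout the remaining parts. For (b)--(d), squaring and summing the AR(1) recursions (\ref{model}) yields the exact identities
\begin{align*}
(1-\rho_n^2)T_n &= L_n+2\rho_n U_n-\rho_n^2\varepsilon_{n,n}^2,\\
(1-\theta_n\rho_n)Q_n &= T_n+\theta_n M_n-\theta_n\rho_n X_{n,n}\varepsilon_{n,n},\\
(1-\theta_n^2)S_n &= T_n+2\theta_n\rho_n Q_n+2\theta_n M_n-\theta_n^2X_{n,n}^2-2\theta_n\rho_n X_{n,n}\varepsilon_{n,n},\\
(1+\theta_n\rho_n)P_n &= (\theta_n+\rho_n)S_{n-1,n}+\theta_n\rho_n X_{n,n}X_{n-1,n}+M_n,
\end{align*}
the last of which is already displayed in (\ref{theta-decom}). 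Combined with $(1-\theta_n^2)\kappa_n\to-2\gamma_1$, $(1-\rho_n^2)\kappa_n\to-2\gamma_2$, $(1-\theta_n\rho_n)\kappa_n\to-(\gamma_1+\gamma_2)$ in Case~I (resp.\ $1-\theta_n\rho_n\to 2$ in Case~II), and the analogous behaviour of $1+\theta_n\rho_n$, the limits cascade: part (c) from the first identity; part (d) from the second and (c); part (b) for $S_n$ from the third together with (c) and (d); and part (b) for $P_n$ from the fourth and (b) for $S_n$.

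The delicate point is $S_n$ under Case~II: the two leading contributions $T_n/(n\kappa_n)\to-\sigma^2/(2\gamma_2)$ and $2\theta_n\rho_n Q_n/(n\kappa_n)\to\sigma^2/(2\gamma_2)$ cancel exactly, so a subleading correction must be extracted. The cleanest route is to bypass the third identity and work directly from $S_n=\bigl(\sum_{m\geq0}a_m(n)^2\bigr)L_n+\mathcal{R}_n$, where the prefactor is as above and $\mathcal{R}_n$ collects a boundary truncation (controlled by part (a)) together with the off-diagonal martingale $2\sum_{l=2}^n V_l\sum_{j=1}^{l-1}\gamma_{j,l}(n)V_j$ with $\gamma_{j,l}(n)=\sum_{k\geq l}a_{k-j}(n)a_{k-l}(n)$; applying the Bercu--Touati inequality to this martingale, with its predictable quadratic variation bounded in terms of $L_n$ and $\Lambda_n$ by a Cauchy--Schwarz argument, gives $\mathcal{R}_n=o(n\kappa_n)$ in the MDP sense. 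The same mechanism handles Case~II of $P_n$, where the factor $(1+\theta_n\rho_n)^{-1}=O(\kappa_n)$ threatens to amplify the $M_n$-remainder but is absorbed by the super-exponential MDP decay of $M_n/n$ in Lemma~\ref{lem-martingales-ldp}.
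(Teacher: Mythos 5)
Your argument is correct, but it is organized quite differently from the paper's, and the difference is most visible at the one genuinely delicate point you identified. For part (a) the paper does not use sub-Gaussianity of $X_{k,n}$ at all: it invokes the deterministic domination $\max_{k}X_{k,n}^2\leq \kappa_n^4\gamma_1^{-2}\gamma_2^{-2}\max_k V_k^2$ from Bercu--Pro\"{\i}a and then applies the exponential moment of $V_1^2$ with a union bound; your route through the moving-average representation gives a slightly sharper exponent ($b_n\sqrt{n/\kappa_n^3}$ versus the paper's $b_n\sqrt{n/\kappa_n^5}$), and both are covered by (H-I)/(H-II). For the remaining parts the paper proceeds in the order (b)$\to$(c)$\to$(d), anchored on the closed-form identity $S_n=\frac{1+\theta_n\rho_n}{(1-\theta_n\rho_n)(1-\theta_n^2)(1-\rho_n^2)}(L_n+R_{n1})$ from Bercu--Pro\"{\i}a; the prefactor there is exactly your $\sum_{m\geq0}a_m(n)^2$, and crucially the remainder $R_{n1}$ is already written as a bounded-coefficient combination of $M_n$, $N_n$ and boundary terms, so Lemma \ref{lem-martingales-ldp} (this is the only place $N_n$ is needed) and part (a) dispose of it in both cases with no cancellation ever arising; $Q_n$ then follows from $2Q_n=(1-\theta_n^2)S_n+\theta_n^2X_{n,n}^2+T_n$, where in Case II the $(1-\theta_n^2)S_n$ term is simply subleading. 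Your cascade (c)$\to$(d)$\to$(b) via the squared recursions is equally valid for everything except $S_n$ in Case II, where you correctly diagnose the exact cancellation of $T_n$ and $2\theta_n\rho_nQ_n$ at order $n\kappa_n$ and fall back on the same $\bigl(\sum_m a_m(n)^2\bigr)L_n$ representation; the price is that your off-diagonal remainder $D_n$ must then be controlled by a fresh Bercu--Touati estimate (your Cauchy--Schwarz bound $\langle D\rangle_n+[D]_n\lesssim \kappa_n^4(L_n+\Lambda_n)$ does close under $n/(\lambda_n^2\kappa_n^{11})\to\infty$), whereas the paper's version of the same remainder is prepackaged as $M_n$ and $N_n$ and needs no new martingale inequality. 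One small imprecision: the diagonal truncation error $\sum_j V_j^2\sum_{m>n-j}a_m(n)^2$ is controlled by $\max_k V_k^2$ via the exponential moment of $V_1^2$, not by part (a) as stated, though the mechanism is the same.
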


%%%%%%%%%%%%%%%%%%%%%%%%%%%%%%%%%%%%%%%%%%%%%%%%%%%%%%%%%%%%%%%%%%%%%%%%%

\subsection{Moderate deviations for $(\hat{\theta}_n, \hat{\rho}_n)$}
Similar to the definition of $Z_n$, let
\begin{equation}\label{def-tildeZ}
\tilde{Z}_{l,n}=\begin{pmatrix}M_{l,n}\\\\U_{l,n}\end{pmatrix},
\qquad \tilde{Z}_{n,n}:=\tilde{Z}_n.
\end{equation}

From the decomposition~(\ref{decom-theta-rho}), we first establish the moderate deviations for
$Z_n$ and $\tilde{Z}_n$ according to the methods used in Theorem 4.9 of Bitseki Penda {\it et al.} \cite{Penda}. For the readability of the paper, its proof is postponed to Section 4.
 However, it should be noted that our truncation methods are somewhat different from those of \cite{Penda}.
Next, we deal with $Y_n$ and $\tilde{Y}_n$, and show that they are exponentially equivalent to some deterministic matrices, respectively.
Finally, we can complete the proof of Theorem \ref{MDP-theta-rho} and Theorem \ref{MDP-theta-rho-II} by establishing the asymptotic negligibility of $R_n(\theta,\rho)/b_n$
and $\tilde{R}_n(\theta,\rho)/\lambda_n$ in the sense of moderate deviations.
\vskip5pt

We start with the following key lemma whose proof is postponed to Section 4.

\begin{lem}\label{lem-mdp-Z}
\noindent(a)  Under (Case~I),  $\left\{\frac{Z_n}{b_n\sqrt{n\kappa_n}}, n\geq1\right\}$
satisfies the large deviation principle with speed $b_n^2$ and good rate function $I_Z(x)=\frac{x^{\tau}\Theta^{-1}x}{2}$, where
$$\Theta=\begin{pmatrix}
-\frac{\sigma^4}{2\gamma_1\gamma_2(\gamma_1+\gamma_2)}
& \frac{\sigma^4}{2\gamma_2(\gamma_1+\gamma_2)}
\\
\frac{\sigma^4}{2\gamma_2(\gamma_1+\gamma_2)}&-\frac{\sigma^4}{2\gamma_2}
\end{pmatrix}.
$$

\noindent(b)  Under (Case~II),  $\left\{\frac{\tilde{Z}_n}{\lambda_n\sqrt{n\kappa_n}}, n\geq1\right\}$
satisfies the large deviation principle with speed $\lambda_n^2$ and good rate function
 $J_{\tilde Z}(x)=\frac{x^{\tau}\tilde{\Theta}^{-1}x}{2}$, where
$$
\tilde{\Theta}=\begin{pmatrix}
-\frac{(\gamma_1+\gamma_2)\sigma^4}{8\gamma_1\gamma_2}
& -\frac{\sigma^4}{4\gamma_2}
\\
-\frac{\sigma^4}{4\gamma_2}&-\frac{\sigma^4}{2\gamma_2}
\end{pmatrix}.
$$
\end{lem}

As an application, we can get the
moderate deviations for $\hat{\theta}_n$.

\begin{cor}\label{mdp-theta}
\noindent(a)  Under (Case~I),
$\left\{\frac{\sqrt{n\kappa_n^3}}{b_n}(\hat{\theta}_n-\theta^*_n),\, n\geq1\right\}$
satisfies the large deviation principle with speed $b_n^2$ and good rate function
\begin{align*}
I_{\theta}(x)=-\frac{x^2}{\gamma_1\gamma_2(\gamma_1+\gamma_2)}, \quad x\in\mathbb{R}.
\end{align*}
\vskip5pt
\noindent(b)  Under (Case II),
$\left\{\frac{\sqrt{n/\kappa_n}}{\lambda_n}(\hat{\theta}_n-\theta^*_n),\, n\geq1\right\}$
satisfies the large deviation principle with speed $\lambda_n^2$ and good rate function
\begin{align*}
J_{\theta}(x)=-\frac{(\gamma_1+\gamma_2)^3x^2}{16\gamma_1\gamma_2}, \quad x\in\mathbb{R}.
\end{align*}
\end{cor}

\begin{proof}
\noindent(a). Under (Case~I), from the contraction principle in large deviations (Theorem 4.2.1 in \cite{Dembo}) and Lemma \ref{lem-mdp-Z},
it follows that $\left\{\frac{M_n}{b_n\sqrt{n\kappa_n^3}}, n\geq1\right\}$ satisfies the large deviations with speed $b_n^2$ and the good
rate function
$$
I_M(x)=-\frac{\gamma_1\gamma_2(\gamma_1+\gamma_2)}{\sigma^4}x^2.
$$
Notice that for any $\delta>0$,
\begin{align*}
&P\left(\left|\frac{S_{n,n-1}}{n\kappa_n^3}+\frac{\sigma^2}
{2\gamma_1\gamma_2(\gamma_1+\gamma_2)}\right|>\delta\right)\\
&\leq P\left(\left|\frac{S_n}{n\kappa_n^3}+\frac{\sigma^2}
{2\gamma_1\gamma_2(\gamma_1+\gamma_2)}\right|>\delta/2\right)
+P\left(\left|\frac{X_{n,n}^2}{n\kappa_n^3}\right|>\delta/2\right).
\end{align*}
Together with Lemma \ref{lem-quar-coquar-ldp}~(a),(b) and the fact that
$$
\lim_{n\to\infty}\frac{\sqrt{nk_n^3}}{b_n}\to\infty,
$$
we have
$$
\lim_{n\to\infty}\frac{1}{b_n^2}\log P\left(\left|\frac{S_{n,n-1}}{n\kappa_n^3}+\frac{\sigma^2}
{2\gamma_1\gamma_2(\gamma_1+\gamma_2)}\right|>\delta\right)=-\infty.
$$
Consequently, ~$\left\{\frac{\sqrt{n\kappa_n^3}}{b_n}\frac{1}{1+\theta_n\rho_n}\cdot\frac{M_n}{S_{n-1,n}}, n\geq1\right\}$ satisfies the
large deviations with speed $b_n^2$ and good rate function
$$
I_{\theta}(x)=-\frac{x^2}{\gamma_1\gamma_2(\gamma_1+\gamma_2)},\quad x\in\mathbb{R}.
$$
Therefore, to prove the result, by (\ref{theta-decom}), it is sufficient to show that, for all $\delta>0$,
\begin{equation}\label{eq-2}
\lim_{n\to\infty}\frac{1}{b_n^2}\log P\left(\left|\frac{\theta_n\rho_n\sqrt{n\kappa_n^3}}{(1+\theta_n\rho_n)b_n}\frac{X_{n,n}X_{n-1,n}}{S_{n-1,n}}\right|>\delta\right)=-\infty.
\end{equation}
In fact, we have
\begin{align*}
&P\left(\left|\frac{\theta_n\rho_n\sqrt{n\kappa_n^3}}{(1+\theta_n\rho_n)b_n}\frac{X_{n,n}X_{n-1,n}}{S_{n-1,n}}\right|>\delta\right)\\
&\leq P\left(\left|\frac{S_n}{n\kappa_n^3}
+\frac{\sigma^2}{2\gamma_1\gamma_2(\gamma_1+\gamma_2)}\right|>-\frac{\sigma^2}{4\gamma_1\gamma_2(\gamma_1+\gamma_2)}\right)\\
&\quad\quad\;+P\left(\left|\frac{X_{n,n}X_{n-1,n}}{b_n\sqrt{n\kappa_n^3}}\right|
>-\frac{(1+\theta_n\rho_n)\sigma^2\delta}{4\theta_n\rho_n\gamma_1\gamma_2(\gamma_1+\gamma_2)}\right),
\end{align*}
which achieves the proof of (\ref{eq-2}) by Lemma \ref{lem-quar-coquar-ldp}~(a)-(b).
\vskip5pt

\noindent(b).  Under (Case~II), according to (\ref{theta-decom}), Lemma \ref{lem-quar-coquar-ldp} and Lemma~\ref{lem-mdp-Z},
we can get part (b) of this corollary by the same methods as in the proof of part~(a).
\end{proof}

Now, we turn to analyze the matrix $Y_n$ defined as in the decomposition (\ref{def-Y-Z}). From the equation (B.12) in Bercu \& Pro\"{i}a \cite{Bercu-2013}, it follows that
\begin{equation}\label{eq-J}
J_{n-1,n}=(1+\theta^*_n)(1-\theta^*_n)S_n-\frac{2\theta_n^*}{1+\theta_n\rho_n}M_n-(\hat{\theta}_n-\theta_n^*)G_n+\xi_n^J.
\end{equation}
Moreover,
\begin{equation}\label{eq-18-1}
\begin{aligned}
G_n&=2P_n-2\theta_n^*S_n-(\hat{\theta}_{n}-\theta^*_n)S_n\\
&=2\left(\frac{1}{1+\theta_n\rho_n}M_n+\frac{\theta_n\rho_n}{1+\theta_n\rho_n}X_{n,n}X_{n-1,n}-\theta_n^*X_{n,n}^2\right)
-(\hat{\theta}_{n}-\theta^*_n)S_n.
\end{aligned}
\end{equation}

\begin{lem}\label{equiv-J}
Letting $a_n=b_n$ in (Case~I) and $a_n=\lambda_n$ in (Case~II), then
\vskip3pt
\noindent(a) for any $\delta>0$, $\lim_{n\to\infty}\frac{1}{a_n^2}\log P\left(\frac{|\xi_{n}^{J}|}{{n\kappa_n}}>\delta\right)=-\infty$;
\vskip5pt

\noindent(b) for any $\delta>0$,
$\lim_{n\to\infty}\frac{1}{a_n^2}\log P\left(\left|\frac{J_{n-1,n}}{n\kappa_n}
+\frac{\sigma^2}{2(\gamma_1+\gamma_2)}\right|>\delta\right)=-\infty$.
\end{lem}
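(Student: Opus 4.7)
The plan is to derive both parts from the explicit expression for $\xi_n^J$ in \eqref{xiIJPQ} and the decomposition \eqref{eq-J} of $J_{n-1,n}$, combined with the moderate deviation estimates established in Lemmas \ref{lem-martingales-ldp} and \ref{lem-quar-coquar-ldp} and in Corollary \ref{mdp-theta}.

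For part (a), $\xi_n^J$ (together with its subcomponent $\xi_n^P$) is a linear combination of $X_{n,n}^2$, $X_{n-1,n}^2$, and $X_{n,n}X_{n-1,n}$, with coefficients built from $\hat\theta_n$, $\theta_n$, $\rho_n$, $\theta_n^*$, and $1/(1+\theta_n\rho_n)$. These coefficients are all bounded in Case~I (since $1/(1+\theta_n\rho_n)\to 1/2$), whereas in Case~II only $1/(1+\theta_n\rho_n)\sim -\kappa_n/(\gamma_1+\gamma_2)$ grows. Using Corollary \ref{mdp-theta} to see that $\hat\theta_n$ is bounded in the sense of moderate deviations (any constant above $|\theta_n^*|$ is exceeded only with superexponentially small probability), the claim reduces to showing that $\max_{1\le k\le n}X_{k,n}^2/(n\kappa_n)$ in Case~I, and $\max_{1\le k\le n}X_{k,n}^2/n$ in Case~II, are negligible in the sense of moderate deviations. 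Both follow from Lemma \ref{lem-quar-coquar-ldp}(a) once one observes that the scaling ratios $b_n\kappa_n^{1/2}/\sqrt{n}$ and $\lambda_n\sqrt{\kappa_n/n}$ tend to zero thanks to the conditions $n/(b_n^2\kappa_n^5)\to\infty$ and $n/(\lambda_n^2\kappa_n^{11})\to\infty$ in (H-I) and (H-II); concretely, one bounds $P(\max X_{k,n}^2/(n\kappa_n)>\delta)\leq P(\max X_{k,n}^2/(b_n\sqrt{n\kappa_n^3})>\delta\sqrt{n}/(b_n\kappa_n^{1/2}))$, where the right-hand threshold diverges and the MDP rate forces the limit to be $-\infty$.

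For part (b), rewrite \eqref{eq-J} as
\[
\frac{J_{n-1,n}}{n\kappa_n}
=\frac{(1-(\theta_n^*)^2)\,S_n}{n\kappa_n}
-\frac{2\theta_n^*}{1+\theta_n\rho_n}\cdot\frac{M_n}{n\kappa_n}
-\frac{(\hat\theta_n-\theta_n^*)\,G_n}{n\kappa_n}
+\frac{\xi_n^J}{n\kappa_n}.
\]
Using the identity $1-(\theta_n^*)^2=(1-\theta_n)(1+\theta_n)(1-\rho_n)(1+\rho_n)/(1+\theta_n\rho_n)^2$ together with a direct expansion of $\theta_n$ and $\rho_n$ in $\kappa_n^{-1}$, one obtains $1-(\theta_n^*)^2\sim \gamma_1\gamma_2/\kappa_n^2$ in Case~I and $1-(\theta_n^*)^2\to 4\gamma_1\gamma_2/(\gamma_1+\gamma_2)^2$ in Case~II. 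Multiplied by the corresponding asymptotics for $S_n$ in Lemma \ref{lem-quar-coquar-ldp}(b), the dominant term converges in the moderate deviation sense to $-\sigma^2/(2(\gamma_1+\gamma_2))$ in both cases, matching the claim. The $M_n$ contribution is controlled by Lemma \ref{lem-martingales-ldp}(a); its coefficient is $O(\kappa_n)$ in Case~II, but this is exactly cancelled by the normalization, leaving a term of the form $C|M_n|/n$, which is negligible. For the cross term $(\hat\theta_n-\theta_n^*)G_n/(n\kappa_n)$, substitute \eqref{eq-18-1} to split $G_n$ into a dominant $-(\hat\theta_n-\theta_n^*)S_n$ piece plus products involving $M_n$, $X_{n,n}^2$, and $X_{n,n}X_{n-1,n}$; each resulting product has the form $A_n B_n c_n$ with $c_n\to 0$ (using $b_n^2\kappa_n/n\to 0$ from (H-I) and $\lambda_n^2\kappa_n/n\to 0$ from (H-II)), so one bounds $P(|A_n B_n c_n|>\delta)\leq P(|A_n|>\sqrt{\delta/c_n})+P(|B_n|>\sqrt{\delta/c_n})$ with $\sqrt{\delta/c_n}\to\infty$, and applies Corollary \ref{mdp-theta} together with Lemmas \ref{lem-martingales-ldp} and \ref{lem-quar-coquar-ldp} to obtain superexponential decay at the speed $a_n^2$. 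Finally, $\xi_n^J/(n\kappa_n)$ is negligible by part (a).

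The main obstacle is the parallel bookkeeping required by the two cases: the factor $1/(1+\theta_n\rho_n)$ is bounded in Case~I but of order $\kappa_n$ in Case~II, so the same algebraic expression must be re-analyzed with different orders of magnitude for the coefficients, and one must check in each case that the surviving scales are still dominated by the rates produced by the hypotheses. Making the product sandwich rigorous also depends crucially on the quantitative conditions in (H-I) and (H-II), which are precisely what guarantee that the relevant scaling ratios vanish and the effective MDP thresholds diverge to infinity.
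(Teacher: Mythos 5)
Your proposal is correct and follows essentially the same route as the paper: the same decomposition \eqref{eq-J} with \eqref{eq-18-1}, the same asymptotic expansions of $1-(\theta_n^*)^2$ and of the coefficients in the two cases, and the same threshold-divergence splitting of products, all fed by Lemmas \ref{lem-martingales-ldp}, \ref{lem-quar-coquar-ldp} and Corollary \ref{mdp-theta}. The only cosmetic difference is that you control the $\hat{\theta}_n^2$-terms in $\xi_n^J$ by superexponential boundedness of $\hat{\theta}_n$ rather than by the Delta-method MDP for $\hat{\theta}_n^2-(\theta_n^*)^2$ used in the paper; both work.
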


\begin{proof}
Since
$$
\left\{\begin{array}{ll}
\lim_{n\to\infty}\frac{n\kappa_n}{b_n\sqrt{n\kappa_n^3}}=\infty,& \textrm{Case~I}\\
\lim_{n\to\infty}\frac{n\kappa_n}{\lambda_n\sqrt{n\kappa_n}}=\infty,& \textrm{Case~II}\\
\end{array}\right.,
$$
it follows from Lemma \ref{lem-quar-coquar-ldp}~(a) that
\begin{equation}\label{eq-2-1}
\lim_{n\to\infty}\frac{1}{a_n^2}\log P\left(\frac{|X_{n,n}X_{n-1,n}|\vee X_{n,n}^2}{{n\kappa_n}}>\delta\right)=-\infty,
\end{equation}
which implies that
$$
\lim_{n\to\infty}\frac{1}{a_n^2}\log P\left(\frac{|\xi_{n}^{P}|}{{n\kappa_n}}>\delta\right)=-\infty.
$$
Therefore, by the definition of $\xi_n^J$ (refer to (\ref{xiIJPQ})), to prove part (a), it is enough to show that
\begin{equation}\label{eq-3}
\lim_{n\to\infty}\frac{1}{a_n^2}\log P\left(\frac{|(\hat{\theta}_n-\theta_n^*)X_{n,n}X_{n-1,n}|}{{n\kappa_n}}>\delta\right)=-\infty
\end{equation}
and
\begin{equation}\label{eq-4}
\lim_{n\to\infty}\frac{1}{a_n^2}\log P\left(\frac{|(\hat{\theta}_n^2-(\theta^*_n)^2)(X_{n,n}^2+X_{n-1,n}^2)|}{{n\kappa_n}}>\delta\right)=-\infty.
\end{equation}

We only prove (\ref{eq-3}) and (\ref{eq-4}) under (Case~I), and the proof under (Case~II) is similar and omitted here.
In fact, for any $L>0$, under (Case~I), one can see that
\begin{align*}
&P\left(\frac{|(\hat{\theta}_n-\theta_n^*)X_{n,n}X_{n-1,n}|}{{n\kappa_n}}>\delta\right)\\
&\leq P\left(\frac{\sqrt{n\kappa_n^3}|\hat{\theta}_n-\theta_n^*|}{b_n}>L\right)+
P\left(\frac{b_n|X_{n,n}X_{n-1,n}|}{n\kappa_n\sqrt{n\kappa_n^3}}>\frac{\delta}{L}\right)
\end{align*}
According to (\ref{eq-2-1}), we can write that
$$
\lim_{n\to\infty}\frac{1}{b_n^2}\log P\left(\frac{b_n|X_{n,n}X_{n-1,n}|}{n\kappa_n\sqrt{n\kappa_n^3}}>\frac{\delta}{L}\right)=-\infty
$$
and
$$
\lim_{n\to\infty}\frac{1}{b_n^2}\log P\left(\frac{\sqrt{n\kappa_n^3}|\hat{\theta}_n-\theta_n^*|}{b_n}>L\right)
=\frac{L^2}{\gamma_1\gamma_2(\gamma_1+\gamma_2)}.
$$
Then we complete the proof of (\ref{eq-3}) by taking $L\to+\infty$.
Moreover, by the Delta method in moderate deviations \cite{Gao-Zhao}
and Corollary \ref{mdp-theta}, $\left\{\frac{\sqrt{n\kappa_n^3}}{b_n}(\hat{\theta}_n^2-(\theta^*_n)^2), n\geq1\right\}$ satisfies the large deviation principle with speed $b_n^2$ and the good rate function
$$
\tilde{I}_{\theta}(x)=-\frac{x^2}{4\gamma_1\gamma_2(\gamma_1+\gamma_2)},\quad
x\in\mathbb{R}.
$$
Hence, similar to the proof of (\ref{eq-3}), we can get (\ref{eq-4}) immediately.
\vskip5pt

As for part (b), we can deduce that
$$
(1+\theta^*_n)(1-\theta^*_n)
=\left\{\begin{array}{ll}
\frac{\gamma_1\gamma_2}{\kappa_n^2}+o(\kappa_n^{-2}),& \textrm{Case~I}\\
\frac{4\gamma_1\gamma_2}{(\gamma_1+\gamma_2)^2}+o(\kappa_n^{-1}),& \textrm{Case~II}\\
\end{array}\right..
$$
Therefore, using Lemma~\ref{lem-quar-coquar-ldp}~(b),
\begin{equation}\label{eq-56}
\lim_{n\to\infty}\frac{1}{a_n^2}\log P\left(\left|\frac{(1+\theta^*_n)(1-\theta^*_n)}{n\kappa_n}S_n
+\frac{\sigma^2}{2(\gamma_1+\gamma_2)}\right|>\delta\right)=-\infty.
\end{equation}
By Lemma \ref{lem-martingales-ldp}, (\ref{eq-J}), (\ref{eq-56}) and part (a) of this lemma, it is sufficient to show that, for all $\delta>0$,
\begin{equation}\label{eq-57}
\lim_{n\to\infty}\frac{1}{a_n^2}\log P\left(\frac{|(\hat{\theta}_n-\theta_n^*)G_n|}{{n\kappa_n}}>\delta\right)=-\infty.
\end{equation}
Indeed, for any $L>0$
\begin{align*}
&P\left(\frac{(\hat{\theta}_{n}-\theta^*_n)^2S_n}{{n\kappa_n}}>\delta\right)\\
&\leq\left\{\begin{array}{ll}
 P\left(\frac{n\kappa_n^3(\hat{\theta}_{n}-\theta^*_n)^2}{b_n^2}>L\right)
+P\left(\frac{b_n^2S_n}{n^2\kappa_n^4}>\frac{\delta}{L}\right),& \textrm{Case~I}\\
 P\left(\frac{n\kappa_n(\hat{\theta}_{n}-\theta^*_n)^2}{\lambda_n^2}>L\right)
+P\left(\frac{\lambda_n^2S_n}{n^2\kappa_n^2}>\frac{\delta}{L}\right),& \textrm{Case~II}\\
\end{array}\right..
\end{align*}
So, letting $n\to+\infty$ and then $L\to+\infty$, by
Lemma \ref{lem-quar-coquar-ldp} and Corollary \ref{mdp-theta}, we can obtain that
\begin{equation}\label{eq-5}
\lim_{n\to\infty}\frac{1}{a_n^2}\log P\left(\frac{(\hat{\theta}_{n}-\theta^*_n)^2S_n}{{n\kappa_n}}>\delta\right)
=-\infty.
\end{equation}
Similarly, One can easily see that
\begin{equation}\label{eq-6}
\begin{aligned}
&\lim_{n\to\infty}\frac{1}{a_n^2}\log P\left(\frac{\left|
\left(\frac{1}{1+\theta_n\rho_n}M_n+\frac{\theta_n\rho_n}{1+\theta_n\rho_n}X_{n,n}X_{n-1,n}-\theta_n^*X_{n,n}^2\right)\right|}{{n\kappa_n}}
|\hat{\theta}_n-\theta_n^*|>\delta\right)\\
&=-\infty.
\end{aligned}
\end{equation}
Thus, (\ref{eq-57}) can be achieved by (\ref{eq-5})-(\ref{eq-6}).
\end{proof}

\begin{lem}\label{quar-H}
For $H_n$ defined by (\ref{xiIJPQ}), we have the following results:
$$
\left\{\begin{array}{ll}
\lim_{n\to\infty}\frac{1}{b_n^2}\log P\left(\left|\frac{H_n}{n}-\frac{\sigma^2}{2}\right|>\delta\right)=-\infty,& \textrm{Case~I}\\
\lim_{n\to\infty}\frac{1}{\lambda_n^2}\log P\left(\left|\frac{H_n}{n\kappa_n}+\frac{\sigma^2}{\gamma_1+\gamma_2}\right|
>\delta\right)=-\infty,& \textrm{Case~II}\\
\end{array}\right..
$$
\end{lem}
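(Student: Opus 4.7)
The strategy is to reduce $H_n$ to a linear combination of quantities whose moderate deviations are already controlled, namely $L_n$, the martingales $M_n, N_n, U_n$, the quadratic sums $S_n, P_n, Q_n, T_n$, the boundary terms $X_{n,n}^2, X_{n-1,n}^2, X_{n,n}X_{n-1,n}$, and the factor $\hat\theta_n-\theta_n^*$ supplied by Corollary~\ref{mdp-theta}. The first step is to expand $W_n$: iterating (\ref{model}) gives $X_{k,n}=\theta_n^2 X_{k-2,n}+\theta_n\varepsilon_{k-1,n}+\varepsilon_{k,n}$, hence $W_n=\theta_n^2 S_{n-2,n}+(\theta_n+\rho_n)\tilde Q_{n-1,n}+N_n$, where $\tilde Q_{l,n}:=\sum_{k=1}^l X_{k-1,n}\varepsilon_{k,n}=(Q_{l,n}-T_{l,n})/\theta_n$ inherits an MDP from Lemma~\ref{lem-quar-coquar-ldp}(c)-(d). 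Next, using $\hat\theta_n+\theta_n^*=2\theta_n^*+(\hat\theta_n-\theta_n^*)$, I split $H_n=\bar H_n+(\hat\theta_n-\theta_n^*)\bigl(\rho_n^* S_n-P_n\bigr)$, where $\bar H_n=(1+2\theta_n^*\rho_n^*)S_n+W_n-2(\theta_n^*+\rho_n^*)P_n$ has only deterministic coefficients.

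The remainder $(\hat\theta_n-\theta_n^*)(\rho_n^* S_n-P_n)$ is treated exactly as in the proof of Lemma~\ref{equiv-J}(a): combining the MDP rate of $\hat\theta_n-\theta_n^*$ from Corollary~\ref{mdp-theta} with the crude a priori bound $|\rho_n^* S_n-P_n|\leq C(|S_n|+|P_n|)$ that is of order $n\kappa_n^3$ (Case~I) or $n\kappa_n$ (Case~II) with MDP-high probability by Lemma~\ref{lem-quar-coquar-ldp}(b), and then applying the truncation ``split on $\{\sqrt{n\kappa_n^3}|\hat\theta_n-\theta_n^*|/b_n>L\}$ and let $L\to\infty$'' shows this remainder is exponentially negligible after dividing by $n$ (Case~I) or $n\kappa_n$ (Case~II). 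For the main piece $\bar H_n$, substitute the identity $P_n=\theta_n^* S_{n-1,n}+M_n/(1+\theta_n\rho_n)+\theta_n\rho_n X_{n,n}X_{n-1,n}/(1+\theta_n\rho_n)$ from (\ref{decom-S-P}) and $S_{n-2,n}=S_n-X_{n,n}^2-X_{n-1,n}^2$, obtaining an explicit linear combination of $S_n$, $\tilde Q_n$, $M_n$, $N_n$ and the boundary terms with coefficients that are rational functions of $\theta_n,\rho_n,\theta_n^*,\rho_n^*$. Applying Lemmas~\ref{mdp-V}, \ref{lem-martingales-ldp}, \ref{lem-quar-coquar-ldp} to these building blocks, together with the Taylor expansions $1-\theta_n^*=\gamma_1\gamma_2/(2\kappa_n^2)+O(\kappa_n^{-3})$ and $1-\rho_n^*=-(\gamma_1+\gamma_2)/\kappa_n-\gamma_1\gamma_2/(2\kappa_n^2)+O(\kappa_n^{-3})$ in Case~I (and their Case~II analogues $\theta_n^*\to(\gamma_2-\gamma_1)/(\gamma_1+\gamma_2)$, $\rho_n^*\to(\gamma_1-\gamma_2)/(\gamma_1+\gamma_2)$), produces the claimed limits.

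The main obstacle is the cascade of cancellations in Case~I. Since $S_n$, $P_n$, and the dominant piece $\theta_n^2 S_{n-2,n}$ of $W_n$ are each of order $n\kappa_n^3$, while $(\theta_n+\rho_n)\tilde Q_n$ is of order $n\kappa_n^2$ and the target order for $H_n$ is only $n$, the rational coefficients must align so that the $O(n\kappa_n^3)$, $O(n\kappa_n^2)$, and $O(n\kappa_n)$ contributions annihilate in succession. The small factor driving the first cancellation is $1-\theta_n^*=O(\kappa_n^{-2})$ appearing in $S_n-P_n=(1-\theta_n^*)S_{n-1,n}+X_{n,n}^2-(M_n+\theta_n\rho_n X_{n,n}X_{n-1,n})/(1+\theta_n\rho_n)$; the subsequent cancellations rely on a delicate interaction of the expansions of $\theta_n^*$ and $\rho_n^*$ through order $\kappa_n^{-2}$ and on the relation $Q_n=\theta_n\tilde Q_n+T_n$. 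Once these are verified, the surviving principal term is proportional to $L_n$ and yields the limit $\sigma^2/2$ via $L_n/n\to\sigma^2$ from Lemma~\ref{mdp-V}. Case~II by contrast requires only a single leading-order cancellation, namely that the coefficient of $P_n$ vanishes because $\theta_n^*+\rho_n^*=O(\kappa_n^{-1})$, so the computation proceeds more directly to $H_n/(n\kappa_n)\to-\sigma^2/(\gamma_1+\gamma_2)$.
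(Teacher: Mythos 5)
Your skeleton matches the paper's: split off the random coefficient via $\hat{\theta}_n+\theta_n^*=2\theta_n^*+(\hat{\theta}_n-\theta_n^*)$, kill the remainder $(\hat{\theta}_n-\theta_n^*)(\rho_n^*S_n-P_n)$ by the same $L$-truncation used for (\ref{eq-3}), and identify the main term by substituting the exact identity for $P_n$ from (\ref{decom-S-P}). The remainder treatment and the Case~II computation are fine. The problem is in your handling of the main term $\bar H_n$ in Case~I. Because you expand $W_n=\theta_n^2S_{n-2,n}+(\theta_n+\rho_n)\tilde Q_{n-1,n}+N_n$, after substituting for $P_n$ your main term is $c_1(n)S_n+c_2(n)\tilde Q_n+\cdots$ with $c_1(n)S_n$ and $c_2(n)\tilde Q_n$ each of order $n\kappa_n^2$, and you need their sum to be $\tfrac{\sigma^2}{2}n+o(n)$. ``Applying Lemmas~\ref{mdp-V}, \ref{lem-martingales-ldp}, \ref{lem-quar-coquar-ldp} to these building blocks'' cannot certify this: those lemmas only pin down $S_n/(n\kappa_n^3)$, $Q_n/(n\kappa_n^2)$, $T_n/(n\kappa_n)$ to leading order, i.e.\ with errors that are $o(n\kappa_n^3)$, $o(n\kappa_n^2)$, $o(n\kappa_n)$ respectively, which is two orders of magnitude too coarse to see a cancellation at scale $n$ between \emph{distinct} random variables. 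Your phrase ``the contributions annihilate in succession'' is exactly the step that fails: deterministic coefficients can only be made to annihilate when they multiply the \emph{same} random variable. You name the correct endpoint (a single term proportional to $L_n$ with coefficient tending to $\tfrac12$), but the route there --- repeatedly substituting the exact identities $\tilde Q_n=(Q_n-T_n)/\theta_n$, (\ref{Q-formulation}), $T_n=(\varepsilon_{n,n}^2+L_n+2\rho_nU_n)/(1-\rho_n^2)$ and (\ref{decom-S-P}) until the whole cancellation sits on one deterministic coefficient --- is the actual content of the proof and is not carried out.

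The paper avoids the cascade entirely by choosing the other expansion $W_n=(\theta_n+\rho_n)P_{n-1,n}-\theta_n\rho_nS_{n-2,n}+N_n$, so that after one substitution of the $P_n$ identity the entire main term is a single product $c(n)S_n$ with the exact factorization
\begin{equation*}
c(n)=1-\theta_n\rho_n+(\theta_n+\rho_n)\theta_n^*-2(\theta_n^*)^2
=\frac{(1-\theta_n\rho_n)(1-\theta_n)(1-\rho_n)(1+\theta_n^*)}{1+\theta_n\rho_n}=O(\kappa_n^{-3}),
\end{equation*}
after which only the single leading-order limit $S_n/(n\kappa_n^3)\to-\sigma^2/(2\gamma_1\gamma_2(\gamma_1+\gamma_2))$ from Lemma~\ref{lem-quar-coquar-ldp}(b) is needed. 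If you want to keep your version of the $W_n$ expansion, you must push the exact substitutions all the way down to $L_n$ before invoking any limit; as written, the argument has a gap precisely where you declared the main obstacle to lie.
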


\begin{proof}
According to (\ref{decom-S-P}), we can write that
\begin{align*}
W_n&:=\sum_{k=2}^n{X_{k,n}X_{k-2,n}}\\
&=(\theta_n+\rho_n)P_n-\theta_n\rho_nS_n+N_n-(\theta_n+\rho_n)X_{n,n}X_{n-1,n}+\theta_n\rho_n(X_{n,n}+X_{n-1,n})\\
&=\left((\theta_n+\rho_n)\theta^*_n-\theta_n\rho_n\right)S_n+R_{n2},
\end{align*}
where $R_{n2}$ is the remainder term.
%\begin{align*}%%R_{n3} 的具体表示
%R_{n3}&=N_n+\frac{\theta_n+\rho_n}{1+\theta_n\rho_n}M_n
%-\frac{(\theta_n+\rho_n)(1-\theta_n\rho_n)}{1+\theta_n\rho_n}X_{n,n}X_{n-1,n}+\theta_n\rho_n(X_{n,n}+X_{n-1,n})\\
%&\quad-(\theta_n+\rho_n)\theta^*_nX_{n,n}^2.
%\end{align*}
Consequently, (\ref{eq-18-1}) implies that
\begin{equation}\label{eq-H}
\begin{aligned}
H_n&=F_n-\rho^*_nG_n\\
&=\left(S_n+Q_n-(\hat{\theta}_n+\theta^*_n)P_n\right)-\rho^*_n\left(2P_n-(\hat{\theta}_n+\theta^*_n)S_n\right)\\
&=\frac{(1-\theta_n\rho_n)(1-\theta_n)(1-\rho_n)(1+\theta_n^*)}{1+\theta_n\rho_n}S_n
+(\hat{\theta}_n-\theta_n^*)(\rho_n^*S_n-P_n)+R_{n3},
\end{aligned}
\end{equation}
where $R_{n3}$ is the corresponding remainder.
%%$R_{n4}=R_{n3}-2(\theta_n^*+\rho_n^*)\left(\frac{1}{1+\theta_n\rho_n}M_n%% R_{n4} 的具体表示
%%+\frac{\theta_n\rho_n}{1+\theta_n\rho_n}X_{n,n}X_{n-1,n}-\theta_n^*X_{n,n}^2\right)$.
Applying Lemmas \ref{lem-martingales-ldp}, \ref{lem-quar-coquar-ldp}
and Corollary \ref{mdp-theta}, we have, for any $\delta>0$,
\begin{equation}\label{eq-H-1}
\left\{\begin{array}{ll}
\lim_{n\to\infty}\frac{1}{b_n^2}\log P\left(\frac{1}{n}\left|(\hat{\theta}_n-\theta_n^*)(\rho_n^*S_n-P_n)+R_{n3}\right|\geq\delta\right)=-\infty,& \textrm{Case~I}\\
\lim_{n\to\infty}\frac{1}{\lambda_n^2}\log P\left(\frac{1}{n\kappa_n}\left|(\hat{\theta}_n-\theta_n^*)(\rho_n^*S_n-P_n)+R_{n3}\right|\geq\delta\right)=-\infty,& \textrm{Case~II}\\
\end{array}\right..
\end{equation}
Furthermore, by some simple calculations,
$$
\frac{(1-\theta_n\rho_n)(1-\theta_n)(1-\rho_n)(1+\theta_n^*)}{1+\theta_n\rho_n}
=\left\{\begin{array}{ll}
-\frac{\gamma_1\gamma_2(\gamma_1+\gamma_2)}{\kappa_n^3}+O(\kappa_n^{-4}),& \textrm{Case~I}\\
\frac{8\gamma_1\gamma_2}{(\gamma_1+\gamma_2)^2}+O(\kappa_n^{-1}),& \textrm{Case~II}\\
\end{array}\right..
$$
Combined with Lemma~\ref{lem-quar-coquar-ldp}, (\ref{eq-H}) and (\ref{eq-H-1}),
we complete the proof of the lemma.
\end{proof}

\begin{lem}\label{equiv-Y}
Letting $Y_n$ and $\tilde{Y}_n$ defined by (\ref{def-Y-Z}) and (\ref{def-tildeY-tildeZ}) respectively,  we have for any $\delta>0$,
$$
\left\{\begin{array}{ll}
\lim_{n\to\infty}\frac{1}{b_n^2}\log P\left(\|Y_n-\Upsilon\|>\delta\right)=-\infty,& \textrm{Case~I}\\
\lim_{n\to\infty}\frac{1}{\lambda_n^2}\log P\left(\|\tilde{Y}_n-\tilde\Upsilon\|>\delta\right)=-\infty,& \textrm{Case~II}\\
\end{array}\right.,
$$
where
$$
\Upsilon:=\begin{pmatrix}
-\frac{\gamma_1\gamma_2(\gamma_1+\gamma_2)}{\sigma^2} &0
\\
-\frac{2\gamma_1(\gamma_1+\gamma_2)}{\sigma^2}&-\frac{2(\gamma_1+\gamma_2)}{\sigma^2}
\end{pmatrix},\qquad
\tilde\Upsilon:=\begin{pmatrix}\frac{8\gamma_1\gamma_2}{(\gamma_1+\gamma_2)^2\sigma^2}\\\\
-\frac{8\gamma_1\gamma_2}{(\gamma_1+\gamma_2)^2\sigma^2}\end{pmatrix}.
$$
\end{lem}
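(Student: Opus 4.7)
The proof is a direct verification, component by component, combining two ingredients: (i) the exponential equivalence to deterministic constants, at the moderate deviations scale, of the normalized sums $S_{n-1,n}$, $J_{n-1,n}$ and $H_n$, which is already established in Lemmas \ref{lem-quar-coquar-ldp}, \ref{equiv-J} and \ref{quar-H}, and (ii) the explicit Taylor expansions in powers of $\kappa_n^{-1}$ of the deterministic coefficients $\theta_n$, $\rho_n$, $\theta_n^*$, $\rho_n^*$ and $1+\theta_n\rho_n$ recorded in (\ref{eq-11})--(\ref{eq-11-1}). The guiding principle is the following: if $A_n$ is exponentially equivalent to a nonzero constant $a$ at speed $a_n^2$, and $c_n$ is a deterministic sequence with $c_n\to c$, then $c_n/A_n$ is exponentially equivalent to $c/a$; this is proved by a standard union bound restricting to the event $\{|A_n-a|<\varepsilon\}$ and then letting $\varepsilon\to0$. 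The same principle extends in the obvious way to sums, products and ratios of finitely many such quantities.

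The entries $Y_n^{(1,1)}$ and $Y_n^{(2,2)}$ under (Case~I), as well as $\tilde Y_n^{(1)}$ under (Case~II), are handled immediately by this principle. For $Y_n^{(1,1)}$ one uses $(1+\theta_n\rho_n)\to 2$ and Lemma \ref{lem-quar-coquar-ldp}(b); for $Y_n^{(2,2)}$ one uses $\theta_n^*/\theta_n\to 1$ and Lemma \ref{equiv-J}(b); for $\tilde Y_n^{(1)}$ one uses $(1+\theta_n\rho_n)\sim -(\gamma_1+\gamma_2)/\kappa_n$ and Lemma \ref{lem-quar-coquar-ldp}(b) in (Case~II). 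Passing from $S_n$ to $S_{n-1,n}=S_n-X_{n,n}^2$ is harmless by Lemma \ref{lem-quar-coquar-ldp}(a).

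The delicate entries are $Y_n^{(2,1)}$ in (Case~I) and $\tilde Y_n^{(2)}$ in (Case~II), both of which contain the difference $\frac{1+\theta_n^*\rho_n^*}{1+\theta_n\rho_n}-\frac{\theta_n^*}{\theta_n}$. I would first establish the algebraic identity
\[
\frac{1+\theta_n^*\rho_n^*}{1+\theta_n\rho_n}-\frac{\theta_n^*}{\theta_n}
=\frac{(\theta_n-\theta_n^*)(1+\theta_n\theta_n^*)}{\theta_n(1+\theta_n\rho_n)},
\]
which rests on the observation $\rho_n^*-\rho_n=\theta_n-\theta_n^*=\frac{\rho_n(\theta_n^2-1)}{1+\theta_n\rho_n}$, an immediate consequence of the definitions in (\ref{thetastar}). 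Under (Case~I) one has $\theta_n-\theta_n^*=\gamma_1/\kappa_n+O(\kappa_n^{-2})$ and $1+\theta_n\theta_n^*\to 2$; together with Lemma \ref{equiv-J}(b) this gives the stated limit $-\frac{2\gamma_1(\gamma_1+\gamma_2)}{\sigma^2}$ for $Y_n^{(2,1)}$.

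The remaining step, and the main obstacle, is the second component of $\tilde Y_n$ in (Case~II), where the three terms inside the bracket are each of order $\kappa_n$ and only partially cancel. I plan to feed the decomposition (\ref{eq-H}) from the proof of Lemma \ref{quar-H}, namely
\[
H_n=\frac{(1-\theta_n\rho_n)(1-\theta_n)(1-\rho_n)(1+\theta_n^*)}{1+\theta_n\rho_n}S_n+\text{remainder},
\]
into $\frac{H_n}{S_{n-1,n}(1+\theta_n\rho_n)}$, so that this ratio is exponentially equivalent to the deterministic expression $\frac{(1-\theta_n)(1-\rho_n)(1-\theta_n\rho_n)(1+\theta_n^*)}{(1+\theta_n\rho_n)^2}\sim -\frac{8\gamma_1\gamma_2\kappa_n}{(\gamma_1+\gamma_2)^3}$, while the stochastic remainder is absorbed by Lemmas \ref{lem-quar-coquar-ldp} and \ref{quar-H}. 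A careful expansion using the identity above then shows the three leading $\kappa_n$-terms combine to $\frac{4\gamma_1\gamma_2}{(\gamma_1+\gamma_2)^3}\kappa_n(1+o(1))$, and dividing by $J_{n-1,n}/n\sim -\frac{\sigma^2}{2(\gamma_1+\gamma_2)}\kappa_n$ via Lemma \ref{equiv-J}(b) yields exactly $-\frac{8\gamma_1\gamma_2}{(\gamma_1+\gamma_2)^2\sigma^2}$. Conceptually this is routine, but the bookkeeping of the $\kappa_n^{-j}$ expansions is where errors are most likely to creep in; I would therefore do this last computation in full.
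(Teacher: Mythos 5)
Your proposal is correct and follows essentially the same route as the paper: Taylor expansions in $\kappa_n^{-1}$ of the deterministic coefficients combined with the exponential equivalences of $S_{n-1,n}$, $J_{n-1,n}$ and $H_n$ from Lemmas \ref{lem-quar-coquar-ldp}, \ref{equiv-J} and \ref{quar-H}, and your final bookkeeping for $Y_n^{(2,1)}$ and $\tilde Y_n^{(2)}$ reproduces the paper's limits exactly. The only (harmless) variation is your closed-form identity for $\frac{1+\theta_n^*\rho_n^*}{1+\theta_n\rho_n}-\frac{\theta_n^*}{\theta_n}$, which neatly packages the cancellation that the paper instead reads off from the separate expansions of the two fractions.
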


\begin{proof}
Under (Case~II), since
$$
1+\theta_n\rho_n=-\frac{\gamma_1+\gamma_2}{\kappa_n}-\frac{\gamma_1\gamma_2}{\kappa_n^2},
$$
according to Lemma~\ref{quar-H}, we have, for any $\delta>0$
\begin{equation}\label{equiv-Y-eq1}
\lim_{n\to\infty}\frac{1}{\lambda_n^2}\log P\left(\left|\frac{H_n}{S_{n-1,n}(1+\theta_n\rho_n)\kappa_n}
+\frac{8\gamma_1\gamma_2}{(\gamma_1+\gamma_2)^3}\right|
>\delta\right)=-\infty.
\end{equation}
Moreover, note that
$$
\frac{\theta_n^*}{\theta_n}=
\left\{\begin{array}{ll}
1-\frac{\gamma_1}{\kappa_n}+O(\kappa_n^{-2}),& \textrm{Case~I}\\
\frac{\gamma_2-\gamma_1}{\gamma_1+\gamma_2}+O(\kappa_n^{-1}),& \textrm{Case~II}\\
\end{array}\right.
$$
and
$$
\frac{1+\theta_n^*\rho_n^*}{1+\theta_n\rho_n}=
\left\{\begin{array}{ll}
1+O(1/{\kappa_n^2}),& \textrm{Case~I}\\
-\frac{4\gamma_1\gamma_2}{(\gamma_1+\gamma_2)^3}\kappa_n+O(\kappa_n^{-1}),& \textrm{Case~II}\\
\end{array}\right..
$$
Then, together with Lemma \ref{lem-quar-coquar-ldp}, Lemma \ref{equiv-J} and (\ref{equiv-Y-eq1}), we can complete the proof of the lemma.
\end{proof}

We are now in a position to prove the main results, Theorems \ref{MDP-theta-rho} and \ref{MDP-theta-rho-II}.
\vskip5pt

\noindent{\it Proof of Theorems \ref{MDP-theta-rho} and \ref{MDP-theta-rho-II}.}
Under (Case~I), from Lemmas \ref{lem-mdp-Z} and \ref{equiv-Y}, we know that the sequence $\left\{\frac{1}{b_n\sqrt{n\kappa_n}}Y_nZ_n, n\geq1\right\}$ satisfies the
large deviation principle with speed $b_n^2$ and the rate function
$$
I_{Y,Z}(x)=-\frac{x^{\tau}(\Upsilon\Theta\Upsilon^{\tau})^{-1} x}{2},\quad x\in\mathbb{R}^2.
$$
Note that
$$
\Upsilon\Theta\Upsilon^{\tau}=
\begin{pmatrix}
-\frac{\gamma_1\gamma_2(\gamma_1+\gamma_2)}{2} & 0
\\
0&-2(\gamma_1+\gamma_2)
\end{pmatrix}=\Gamma,
$$
hence, we have $I_{Y,Z}(x)=I_{\theta,\rho}(x)$.
\vskip5pt

On the other hand, under (Case~II), using  Lemmas \ref{lem-mdp-Z} and \ref{equiv-Y} again, each component of
$\left\{\frac{1}{\lambda_n\sqrt{n\kappa_n}}\tilde{Y}_nM_n, n\geq1\right\}$ satisfies the
large deviation principle with speed $\lambda_n^2$ and the rate function
$$
J(x)=-\frac{(\gamma_1+\gamma_2)^3}{16\gamma_1\gamma_2}x^2,\quad x\in\mathbb{R}.
$$
\vskip5pt

Recalling the decomposition (\ref{decom-theta-rho}), to obtain our main result, we only need to prove that
\begin{equation}\label{equiv-remainder-0}
\left\{\begin{array}{ll}
\lim_{n\to\infty}\frac{1}{b_n^2}\log P\left(\frac{|R_n(\theta,\rho)|}{b_n}>\delta\right)=-\infty,& \textrm{Case~I}\\
\lim_{n\to\infty}\frac{1}{\lambda_n^2}\log P\left(\frac{|\tilde{R}_n(\theta,\rho)|}{\lambda_n}>\delta\right)=-\infty,& \textrm{Case~II}\\
\end{array}\right..
\end{equation}

In fact, under (Case~I), similar to the proof of (\ref{eq-3}), we can obtain by Lemma~\ref{quar-H} that
\begin{equation}\label{equiv-remainder-1}
\lim_{n\to\infty}\frac{1}{b_n^2}\log P\left(\frac{|(\hat{\theta}_n-\theta^*_n)H_n|}{b_n\sqrt{n\kappa_n}}>\delta\right)=-\infty.
\end{equation}
Moreover, under (Case~II), by Lemma~\ref{lem-mdp-Z} and (\ref{equiv-Y-eq1}),
\begin{equation}\label{equiv-remainder-1-1}
\lim_{n\to\infty}\frac{1}{\lambda_n^2}\log P\left(\frac{\left|\frac{\theta_n^*}{\theta_n}U_n-\frac{\theta_n\rho_n}{1+\theta_n\rho_n}\cdot\frac{X_{n,n}X_{n-1,n}}{S_{n-1,n}}H_n\right|}
{\lambda_n\sqrt{n\kappa_n^3}}>\delta\right)=-\infty.
\end{equation}
Therefore, to prove (\ref{equiv-remainder-0}), by Lemma~\ref{equiv-J}, (\ref{equiv-remainder-1}) and (\ref{equiv-remainder-1-1}),
 we need to show the following result:
\begin{equation}\label{equiv-remainder-2}
\left\{\begin{array}{ll}
\lim_{n\to\infty}\frac{1}{b_n^2}\log P\left(\frac{|\xi_n^I-\rho^*_n\xi_n^J|}{b_n\sqrt{n\kappa_n}}>\delta\right)=-\infty,& \textrm{Case~I}\\
\lim_{n\to\infty}\frac{1}{\lambda_n^2}\log P\left(\frac{|\xi_n^I-\rho^*_n\xi_n^J|}{\lambda_n\sqrt{n\kappa_n^3}}>\delta\right)=-\infty,& \textrm{Case~II}\\
\end{array}\right..
\end{equation}

\begin{proof}[Proof of (\ref{equiv-remainder-2})]
By straightforward calculations, we get that
\begin{align}
\xi_n^I-\rho^*_n\xi_n^J=\Delta_{n1}+\Delta_{n2},
\end{align}
where
\begin{align*}
\Delta_{n1}&:=
\frac{\theta_n\rho_n(1-\theta_n^2)(1-\rho_n^2)}{(1+\theta_n\rho_n)^3}X_{n,n}X_{n-1,n}
+\rho^*_n(\theta^*_n+1)(\theta^*_n-1)X_{n-1,n}^2
\end{align*}
and
\begin{align*}
\Delta_{n2}&:=\left((\theta^*_n)^2-\hat{\theta}_n^2+2\rho^*_n(\theta^*_n-\hat{\theta}_n)\right)X_{n,n}X_{n-1,n}\\
&\;\quad +\left(\rho^*_n(\hat{\theta}_n^2-(\theta^*_n)^2)+(\hat{\theta}_n-\theta^*_n)\right)X_{n,n}^2
+\rho^*_n\left(\hat{\theta}_n^2-(\theta^*_n)^2\right)X_{n-1,n}^2.
\end{align*}
Some simple calculations imply that
$$
\frac{\theta_n\rho_n(1-\theta_n^2)(1-\rho_n^2)}{(1+\theta_n\rho_n)^3}=
\left\{\begin{array}{ll}
\frac{\gamma_1\gamma_2}{2\kappa_n^2}+O\left(\kappa_n^{-3}\right),& \textrm{Case~I}\\
\frac{4\gamma_1\gamma_2}{(\gamma_1+\gamma_2)^3}\kappa_n+O(1),& \textrm{Case~II}\\
\end{array}\right.
$$
and
$$
\rho^*_n(\theta^*_n+1)(\theta^*_n-1)=
\left\{\begin{array}{ll}
-\frac{\gamma_1\gamma_2}{\kappa_n^2}+O\left(\kappa_n^{-3}\right),& \textrm{Case~I}\\
\frac{4\gamma_1\gamma_2(\gamma_2-\gamma_1)}{(\gamma_1+\gamma_2)^3}+O(\kappa_n^{-1}),& \textrm{Case~II}\\
\end{array}\right..
$$
Hence, it follows that, from Lemma \ref{lem-quar-coquar-ldp},
$$
\left\{\begin{array}{ll}
\lim_{n\to\infty}\frac{1}{b_n^2}\log P\left(\frac{|\Delta_{n1}|}{b_n\sqrt{n\kappa_n}}>\delta\right)=-\infty,& \textrm{Case~I}\\
\lim_{n\to\infty}\frac{1}{\lambda_n^2}\log P\left(\frac{|\Delta_{n1}|}{\lambda_n\sqrt{n\kappa_n^3}}>\delta\right)=-\infty,& \textrm{Case~II}\\
\end{array}\right..
$$
Moreover, by Lemma \ref{lem-quar-coquar-ldp}, Corollary \ref{mdp-theta} and the similar methods as in the proof of (\ref{eq-4}), we also get
$$
\left\{\begin{array}{ll}
\lim_{n\to\infty}\frac{1}{b_n^2}\log P\left(\frac{|\Delta_{n2}|}{b_n\sqrt{n\kappa_n}}>\delta\right)=-\infty,& \textrm{Case~I}\\
\lim_{n\to\infty}\frac{1}{\lambda_n^2}\log P\left(\frac{|\Delta_{n2}|}{\lambda_n\sqrt{n\kappa_n^3}}>\delta\right)=-\infty,& \textrm{Case~II}\\
\end{array}\right..
$$
That is to say, we obtain the equation (\ref{equiv-remainder-2}). Hence we complete the proof of Theorems \ref{MDP-theta-rho} and \ref{MDP-theta-rho-II}.
\end{proof}

%%%%%%%%%%%%%%%%%%%%%%%%%%%%%%%%%%%%%%%%%%%%%%%%%%%%%%%%%%%%%%%%%%%%%%%
%%%%%%%%%%%%%%%%%%%%%%%%%%%%%%%%%%%%%%%%%%%%%%%%%%%%%%%%%%%%%%%%%%%%%%%

\subsection{Moderate deviations for $\hat{d}_n$}

For the sake of the readers, let us recall some notations defined in previous sections,
\[
J_n:=J_{n,n}:=\sum_{k=1}^n\hat{\varepsilon}_{k,n}^2,\quad \hat{\varepsilon}_{n}^2:=\hat{\varepsilon}_{n,n}^2,\quad \hat{\varepsilon}_{k,n}^2:=X_{k,n}-\hat{\theta}_nX_{k-1,n}
\]
and the Durbin-Watson statistic,
\[
\hat{d}_n:=\frac{\sum_{k=1}^n(\hat{\varepsilon}_{k,n}-\hat{\varepsilon}_{k-1,n})^2}
{\sum_{k=1}^{n}\hat{\varepsilon}_{k,n}^2},\quad
d^*:=2(1-\rho_n^*)=2(1-\theta_n\rho_n\theta_n^*).
\]
Putting $f_n:=\hat{\varepsilon}_n^2/J_n$, from the equation (C.4) in Bercu \& Pro\"{i}a \cite{Bercu-2013}, we know that
\begin{equation}\label{eq-D}
\hat{d}_n-d^*_n=-2(\hat{\rho}_n-\rho^*_n)+R_{n}(d),
\end{equation}
where, the remainder term $R_{n}(d)=2(\hat{\rho}_n-\rho^*_n)f_n+(2\rho^*_n-1)f_n$.
\vskip5pt

\begin{proof}[Proof of Corollary \ref{mdp-DW}] By (\ref{eq-D}), Theorems \ref{MDP-theta-rho} and \ref{MDP-theta-rho-II},
it is sufficient to show the asymptotic negligibility of $R_n(d)$, i.e. for any $\delta>0$,
\begin{equation}\label{R-d}
\left\{\begin{array}{ll}
\lim_{n\to\infty}\frac{1}{b_n^2}\log P\left(\frac{\sqrt{n\kappa_n}|R_n(d)|}{b_n}>\delta\right)=-\infty,& \textrm{Case~I}\\
\lim_{n\to\infty}\frac{1}{\lambda_n^2}\log P\left(\frac{\sqrt{n/\kappa_n}|R_n(d)|}{\lambda_n}>\delta\right)=-\infty,& \textrm{Case~II}\\
\end{array}\right..
\end{equation}

We only prove (\ref{R-d}) under (Case~I), while the proof under (Case~II) is similar and omitted here. In fact,
\begin{align*}
f_n
&=\frac{\left((\theta_n^*-\hat{\theta}_n)X_{k-1,n}
+(\theta_n-\theta_n^*)X_{n-1,n}+\varepsilon_{n,n}\right)^2}{J_n}\\
&\leq\frac{4}{J_n}\left((\theta_n^*-\hat{\theta}_n)^2X_{n-1,n}^2
+(\theta_n-\theta_n^*)^2X^2_{n-1,n}+\varepsilon^2_{n,n}\right),
\end{align*}
therefore, for any $L>0$, we have
\begin{align*}
&P\left(\frac{\sqrt{n\kappa_n}|f_n|}{b_n}>\delta\right)\\
&\leq P\left(\left|\frac{J_{n}}{n\kappa_n}
+\frac{\sigma^2}{2(\gamma_1+\gamma_2)}\right|>-\frac{\sigma^2}{4(\gamma_1+\gamma_2)}\right)\\
&\quad+P\left(\frac{(\theta_n^*-\hat{\theta}_n)^2X_{n-1,n}^2+(\theta_n-\theta_n^*)^2X^2_{n-1,n}+\varepsilon^2_{n,n}}{b_n\sqrt{n\kappa_n}}
>-\frac{\sigma^2}{16(\gamma_1+\gamma_2)}\right)\\
&\leq P\left(\left|\frac{J_{n}}{n\kappa_n}
+\frac{\sigma^2}{2(\gamma_1+\gamma_2)}\right|>-\frac{\sigma^2}{4(\gamma_1+\gamma_2)}\right)
+P\left(\frac{\varepsilon^2_{n,n}}{b_n\sqrt{n\kappa_n}}>-\frac{\sigma^2}{48(\gamma_1+\gamma_2)}\right)\\
&\quad+P\left(\frac{X^2_{n-1,n}}{b_n\sqrt{n\kappa_n^3}}>-\frac{\sigma^2}{48(\gamma_1+\gamma_2)\kappa_n(\theta_n-\theta_n^*)^2}\right)
+P\left(\frac{n\kappa_n^3}{b_n^2}(\theta_n^*-\hat{\theta}_n)^2>L\right)\\
&\quad+P\left(\frac{X^2_{n-1,n}}{b_n\sqrt{n\kappa_n^3}}>-\frac{n\kappa_n^2\sigma^2}{48(\gamma_1+\gamma_2)Lb_n^2}\right).
\end{align*}
Letting $n\to+\infty$ and then $L\to+\infty$, we can obtain that, by Lemma \ref{lem-quar-coquar-ldp}, the equation (\ref{deviation-ineq-max-varepsilon}), Corollary \ref{mdp-theta} and Lemma \ref{equiv-J},
$$
\lim_{n\to\infty}\frac{1}{b_n^2}\log P\left(\frac{\sqrt{n\kappa_n}|f_n|}{b_n}>\delta\right)=-\infty.
$$
We are done.
\end{proof}

%%%%%%%%%%%%%%%%%%%%%%%%%%%%%%%%%%%%%%%%%%%%%%%%%%%%%%%%%%%%%%%%%%%%%%%%

%%%%%%%%%%%%%%%%%%%%%%%%%%%%%%%%%%%%%%%%%%%%%%%%%%%%%%%%%%%%%%%%%%%%%%%%

\section{Technical appendix}\label{sec5}
In this section, we will give explicit proofs to Lemma \ref{lem-martingales-ldp}, Lemma \ref{lem-quar-coquar-ldp} and Lemma \ref{lem-mdp-Z}.

\subsection{Proofs of Lemma \ref{lem-martingales-ldp} and Lemma \ref{lem-quar-coquar-ldp}}

We first recall a result of large deviations for i.i.d. random variables.

\begin{lem}[Eichelsbacher \& L\"{o}we \cite{Eichelsbacher}]\label{mdp-V}
Let
\begin{align}
L_n=\sum_{k=1}^nV_k^2\quad {\rm and}\quad \Lambda_n=\sum_{k=1}^nV_k^4.
\end{align}
Under (Case~I) and (Case~II),
 $\left\{\frac{L_n-n\sigma^2}{\sqrt{n}a_n}, n\geq1\right\}$
and $\left\{\frac{\Lambda_n-nEV_1^4}{\sqrt{n}a_n}, n\geq1\right\}$ satisfy the large deviation principle with speed $a_n^2$ and rate functions
\begin{align}
I_L(x)=\frac{x^2}{2E(V_1^2-\sigma^2)^2}\quad{\rm and}\quad I_{\Lambda}(x)=\frac{x^2}{2E(V_1^4-EV_1^4)^2},
\end{align}
respectively, where $a_n=b_n$ or $\lambda_n$.
\end{lem}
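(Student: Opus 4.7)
The plan is to reduce the statement to the classical Chen--Ledoux moderate deviation principle for centered i.i.d.\ sums, applied to $Y_k^{(2)}:=V_k^2-\sigma^2$ and $Y_k^{(4)}:=V_k^4-EV_1^4$. The Gaussian integrability condition \eqref{exp-moment} makes every polynomial moment of $V_1$ finite, so these two variables are centered, i.i.d., with finite variances equal to $E(V_1^2-\sigma^2)^2$ and $E(V_1^4-EV_1^4)^2$ respectively; this already pins down the correct form of the rate functions $I_L$ and $I_\Lambda$.

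Next I would verify the Chen--Ledoux tail condition
\[
\frac{1}{a_n^2}\log\bigl(n\,P(|Y_1^{(j)}|>\sqrt{n}\,a_n)\bigr)\to-\infty,\qquad j\in\{2,4\}.
\]
For $j=4$ this is (C-L~$(a_n)$), the centering by $EV_1^4$ being harmless once $\sqrt{n}\,a_n\to\infty$. For $j=2$ one either observes $P(V_1^2>\sqrt n\,a_n)=P(V_1^4>n a_n^2)\leq P(V_1^4>\sqrt n\,a_n)$ for $n$ large and invokes (C-L~$(a_n)$), or uses the Gaussian tail $P(V_1^2>y)\leq e^{-t_0 y}E\exp(t_0V_1^2)$ coming from \eqref{exp-moment}, together with $a_n\to\infty$ and $a_n=o(\sqrt n)$, the latter being implicit in (H-I) and (H-II).

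The MDP itself then follows by the standard truncation strategy. Set $\bar Y_k := Y_k\mathbf{1}_{\{|Y_k|\leq\sqrt n\,a_n\}}$; the tail estimate above gives exponential negligibility of $\sum_{k=1}^n(Y_k-\bar Y_k)$ at speed $a_n^2$, reducing the problem to the truncated sum. For the latter, a Taylor expansion of the cumulant generating function at the small parameter $\lambda=ta_n/\sqrt n$ yields
\[
\lim_{n\to\infty}\frac{1}{a_n^2}\log E\exp\!\Bigl(\frac{t a_n}{\sqrt n}\sum_{k=1}^n\bar Y_k\Bigr)=\frac{t^2\sigma_Y^2}{2},
\]
with $\sigma_Y^2=E(V_1^2-\sigma^2)^2$ or $E(V_1^4-EV_1^4)^2$. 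The G\"artner--Ellis theorem then delivers the announced LDP with the quadratic rate functions $I_L$ and $I_\Lambda$.

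The main obstacle, and the point at which the Chen--Ledoux condition is used in full strength, is controlling the error in the Taylor expansion of the truncated Laplace transform uniformly in $t$: one needs the higher-order terms, which involve moments of $\bar Y_k$ up to the truncation level $\sqrt n\,a_n$, to be of order $o(a_n^2/n)$. This is exactly what the tail condition combined with $a_n=o(\sqrt n)$ provides. In practice, since the statement is precisely the theorem of Eichelsbacher--L\"owe, the cleanest route is to quote their result once the above two hypotheses have been verified for $Y_k^{(2)}$ and $Y_k^{(4)}$.
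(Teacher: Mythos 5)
Your proposal is correct and takes essentially the same route as the paper: the paper states this lemma as a direct quotation of the Eichelsbacher--L\"owe MDP for i.i.d.\ sums, and your contribution---checking that the assumed condition (C-L~$(a_n)$) on $V_1^4$, together with the moment bounds from (\ref{exp-moment}) and $a_n=o(\sqrt n)$, yields the Chen--Ledoux tail condition for both $V_k^2-\sigma^2$ and $V_k^4-EV_1^4$ before invoking that theorem---is precisely the verification the paper leaves implicit. (One immaterial remark: your sketch of the internal proof truncates at level $\sqrt n\,a_n$, whereas the G\"artner--Ellis expansion with $\lambda=ta_n/\sqrt n$ requires truncation at a level of order $\sqrt n/a_n$ so that $\lambda\bar Y_k$ stays bounded; since you ultimately defer to citing the theorem, this does not affect the argument.)
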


\noindent{\it\textbf{Proof of Lemma \ref{lem-martingales-ldp}}.}
For part (a), we only need to prove the result for $M_n$ because the asymptotic negligibility of $\frac{N_n}{n}$ can be obtained similarly.
Since $(M_{l,n})_{1\leq l\leq n}$ is a locally square integrable martingale, we infer that, from Theorem 2.1 of Bercu \& Touati \cite{Bercu-2008}, for all $x,\,y>0$,
\begin{equation}\label{deviation-ineq-martingale-quar-M}
P\left(|M_n|>x, \langle M_{\bullet,n}\rangle_n+[M_{\bullet,n}]_n\leq y\right)\leq2\exp\left\{-\frac{x^2}{2y}\right\},
\end{equation}
where the total quadratic variation $[M_{\bullet,n}]_n=\sum_{k=1}^nX_{k-1,n}^2V_k^2$.
By (\ref{deviation-ineq-martingale-quar-M}), we have, for all $\delta>0$,
\begin{align*}
P\left(\frac{|M_n|}{n}>\delta\right)&\leq P\left(|M_n|>\delta n, \langle M_{\bullet,n}\rangle_n+[M_{\bullet,n}]_n\leq n\kappa_n^5\right)\\
&\quad\quad+P\left(\langle M_{\bullet,n}\rangle_n+[M_{\bullet,n}]_n> n\kappa_n^5\right)\\
&\leq2\exp\left\{-\frac{\delta^2n}{2 \kappa_n^5}\right\}+P\left(\langle M_{\bullet,n}\rangle_n>\frac{n\kappa_n^5}{2}\right)
+P\left([M_{\bullet,n}]_n>\frac{n\kappa_n^5}{2}\right).\\
\end{align*}
Obviously, by the assumptions,
\[
\lim_{n\to\infty}\frac{1}{a_n^2}\log\left(2\exp\left\{-\frac{\delta^2n}{2 \kappa_n^5}\right\}\right)=-\infty.
\]
We only need to deal with the last two terms. According to (A.8) and (A.9) in Bercu \& Pro\"{i}a~\cite{Bercu-2013}, we have, for any $a>0$,
\begin{equation}\label{Sn-Vn}
\sum_{k=1}^n|X_{k,n}|^a\leq\left(1-|\theta_n|\right)^{-a}\left(1-|\rho_n|\right)^{-a}\sum_{k=1}^n|V_{k}|^a.
\end{equation}
Put $\Xi_{l,n}:=\sum_{k=1}^{l}X_{k,n}^4$ and $\Xi_n:=\Xi_{n,n}$, then by (\ref{Sn-Vn}),
\begin{equation}\label{quar-coquar-ineq-M}
\langle M_{\bullet,n}\rangle_n=\sigma^2 S_{n-1,n}\leq\frac{\sigma^2\kappa_n^4}{\gamma_1^2\gamma_2^2}L_n\quad{\rm and}\quad
[M_{\bullet,n}]_n\leq \Xi_n^{\frac{1}{2}}\Lambda_n^{\frac{1}{2}}\leq\frac{\kappa_n^4}{\gamma_1^2\gamma_2^2}\Lambda_n.
\end{equation}
Therefore,
{\small\begin{align*}
&P\left(\langle M_{\bullet,n}\rangle_n>\frac{n\kappa_n^5}{2}\right)
+P\left([M_{\bullet,n}]_n>\frac{n\kappa_n^5}{2}\right)\\
&\leq P\left(\frac{L_n-n\sigma^2}{\sqrt{n}a_n}>\frac{\gamma_1^2\gamma_2^2\sqrt{n}\kappa_n}{2\sigma^2 a_n}-\frac{\sqrt{n}}{a_n}\sigma^2\right)
+P\left(\frac{\Lambda_n-nEV_1^4}{\sqrt{n}a_n}>\frac{\gamma_1^2\gamma_2^2\sqrt{n}\kappa_n}{a_n}-\frac{\sqrt{n}}{a_n}EV_1^4\right).
\end{align*}}
From Lemma \ref{mdp-V} and the fact that, as $n\to\infty$,
$$
\frac{\gamma_1^2\gamma_2^2\sqrt{n}\kappa_n}{2\sigma^2 a_n}-\frac{\sqrt{n}}{a_n}\sigma^2\to\infty,\quad
\frac{\gamma_1^2\gamma_2^2\sqrt{n}\kappa_n}{a_n}-\frac{\sqrt{n}}{a_n}EV_1^4\to\infty,
$$
we get that
$$
\lim_{n\to\infty}\frac{1}{a_n^2}\left(\log P\left(\langle M_{\bullet,n}\rangle_n>\frac{n\kappa_n^5}{2}\right)
\vee\log P\left([M_{\bullet,n}]_n>\frac{n\kappa_n^5}{2}\right)\right)=-\infty,
$$
which achieves the proof of part (a).
\vskip5pt

For part (b), applying (\ref{Sn-Vn}), we can write that
\begin{equation}\label{quar-coquar-ineq-U}
\langle U_{\bullet,n}\rangle_n=\sigma^2\sum_{k=1}^{n-1}\varepsilon_{k,n}^2
\leq\frac{\sigma^2\kappa_n^2}{\gamma_2^2}L_n \quad{\rm and}\quad
[U_{\bullet,n}]_n=\sum_{k=1}^{n}\varepsilon_{k-1,n}^2V_k^2
\leq\frac{\kappa_n^2}{\gamma_2^2}\Lambda_n.
\end{equation}
Hence, similar to the proof of part (a) in Lemma \ref{lem-martingales-ldp}, we have
{\small\begin{align*}
P\left(\frac{|U_n|}{n}>\delta\right)&\leq
2\exp\left\{-\frac{\delta^2n}{2 \kappa_n^3}\right\}+P\left(\langle U_{\bullet,n}\rangle_n>\frac{n\kappa_n^3}{2}\right)
+P\left([U_{\bullet,n}]_n>\frac{n\kappa_n^3}{2}\right)
\end{align*}}
and
\begin{align*}
&P\left(\langle U_{\bullet,n}\rangle_n>\frac{n\kappa_n^3}{2}\right)
+P\left([U_{\bullet,n}]_n>\frac{n\kappa_n^3}{2}\right)\\
&\leq P\left(\frac{L_n-n\sigma^2}{\sqrt{n}a_n}>\frac{\sqrt{n}}{a_n}\left(\frac{\gamma_2^2\kappa_n}{2\sigma^2}-\sigma^2\right)\right)
+P\left(\frac{\Lambda_n-nEV_1^4}{\sqrt{n}a_n}>\frac{\sqrt{n}}{a_n}\left(\gamma_2^2\kappa_n-EV_1^4\right)\right),
\end{align*}
which, together with Lemma \ref{mdp-V}, yield that
$$
\lim_{n\to\infty}\frac{1}{a_n^2}\log P\left(\frac{|U_n|}{n}>\delta\right)=-\infty.
$$

\noindent{\it\textbf{Proof of Lemma \ref{lem-quar-coquar-ldp}}.}
For part (a), according to (A.6) and (A.7) in Bercu \& Pro\"{i}a~\cite{Bercu-2013},
\begin{equation}\label{max-X-Varepsilon}
\max_{1\leq k\leq n}X_{k,n}^2\leq\frac{\kappa_n^2}{\gamma_1^2}\max_{1\leq k\leq n}\varepsilon_{k,n}^2
\quad{\rm and}\quad \max_{1\leq k\leq n}\varepsilon_{k,n}^2\leq\frac{\kappa_n^2}{\gamma_2^2}\max_{1\leq k\leq n}V_k^2,
\end{equation}
we can see that
$$
\max_{1\leq k\leq n}X_{k,n}^2\leq\frac{\kappa_n^4}{\gamma_1^2\gamma_2^2}\max_{1\leq k\leq n}V_k^2,
$$
which implies that, for any $x>0$ and the $t_0>0$ in (\ref{exp-moment}),
\begin{align*}
P\left(\max_{1\leq k\leq n}X_{k,n}^2>x\right)
&\leq P\left(\max_{1\leq k\leq n}V_k^2>\frac{x\gamma_1^2\gamma_2^2}{\kappa_n^4}\right)\\
&\leq n\exp\left\{-\frac{t_0x\gamma_1^2\gamma_2^2}{\kappa_n^4}\right\}E\exp\{t_0V_1^2\}.
\end{align*}
Hence, under (Case~I),
\begin{align*}
&\frac{1}{b_n^2}\log P\left(\frac{\max_{1\leq k\leq n}X_{k,n}^2}{b_n\sqrt{n\kappa_n^3}}>\delta\right)\\
&\leq \frac{\sqrt{n\kappa_n^{-5}}}{b_n}\left(\frac{\log n}{b_n\sqrt{n\kappa_n^{-5}}}-t_0\delta\gamma_1^2\gamma_2^2
+\frac{\log E\exp\{t_0V_1^2\}}{b_n\sqrt{n\kappa_n^{-5}}}\right)\to-\infty.
\end{align*}
On the other hand, under (Case~II),
\begin{align*}
&\frac{1}{\lambda_n^2}\log P\left(\frac{\max_{1\leq k\leq n}X_{k,n}^2}{\lambda_n\sqrt{n\kappa_n}}>\delta\right)\\
&\leq \frac{\sqrt{n\kappa_n^{-7}}}{\lambda_n}\left(\frac{\log n}{\lambda_n\sqrt{n\kappa_n^{-7}}}-t_0\delta\gamma_1^2\gamma_2^2
+\frac{\log E\exp\{t_0V_1^2\}}{\lambda_n\sqrt{n\kappa_n^{-7}}}\right)\to-\infty.
\end{align*}
Then the proof of part (a) is completed.
\vskip5pt

For part (b), by (A.14) and (A.23) in Bercu \&  Pro\"{i}a~\cite{Bercu-2013}, we know that
\begin{equation}\label{decom-S-P}
\left\{\begin{array}{ll}
S_n=\frac{1+\theta_n\rho_n}{(1-\theta_n\rho_n)(1-\theta_n^2)(1-\rho_n^2)}(L_n+R_{n1}),\\
P_n=\theta^*_nS_{n-1,n}+\frac{1}{1+\theta_n\rho_n}M_n+\frac{\theta_n\rho_n}{1+\theta_n\rho_n}X_{n,n}X_{n-1,n}.\\
\end{array}\right.
\end{equation}
where~$L_n=\sum_{k=1}^n{V_k}^2$ and $R_{n1}$ is the remainder term.
%\begin{align*}
%R_{n1}&=\Big(2\theta_n\rho_n(\theta_n+\rho_n)\theta^*_n-(\theta_n+\rho_n)^2-(\theta_n\rho_n)^2\Big)X_{n,n}^2
%+\Big(2(\theta_n+\rho_n)-2\theta_n\rho_n\theta^*_n\Big)M_n\\
%&\quad %-2\theta_n\rho_nN_n+\Big(2\theta_n\rho_n(\theta_n+\rho_n)-2(\theta_n\rho_n)^2\theta^*_n\Big)X_{n,n}X_{n-1,n}-(\theta_n\rho_n)^2X_{n-1,n}^2.
%\end{align*}
By simple calculations, we can write that
\begin{equation}\label{eq-11}
\frac{1+\theta_n\rho_n}{(1-\theta_n\rho_n)(1-\theta_n^2)(1-\rho_n^2)}
=\left\{\begin{array}{ll}
-\frac{\kappa_n^3}{2\gamma_1\gamma_2(\gamma_1+\gamma_2)}+o(\kappa_n^3),& \textrm{Case~I}\\
-\frac{\kappa_n(\gamma_1+\gamma_2)}{8\gamma_1\gamma_2}+o(\kappa_n),& \textrm{Case~II}\\
\end{array}\right.
\end{equation}
and
\begin{equation}\label{eq-11-1}
\theta_n^*=\left\{\begin{array}{ll}
1+O(\kappa_n^{-2}),& \textrm{Case~I}\\
\frac{\gamma_2-\gamma_1}{\gamma_1+\gamma_2}+O(\kappa_n^{-1}),& \textrm{Case~II}\\
\end{array}\right.,\;\;
\frac{1}{1+\theta_n\rho_n}=\left\{\begin{array}{ll}
1/2+O(\kappa_n^{-1}),& \textrm{Case~I}\\
-\frac{\kappa_n}{\gamma_1+\gamma_2}+O(1),& \textrm{Case~II}\\
\end{array}\right..
\end{equation}
According to Lemmas \ref{mdp-V}, \ref{lem-martingales-ldp} and part (a) of this lemma, we have
\begin{equation}\label{remainder-R1}
\lim_{n\to\infty}\frac{1}{a_n^2}\log P\left(\frac{|R_{n1}|}{n}>\delta\right)=-\infty
\end{equation}
and
\begin{equation}\label{remainder-R1-1}
\lim_{n\to\infty}\frac{1}{a_n^2}\log P\left(\left|\frac{L_n}{n}
-\sigma^2\right|>\delta\right)=-\infty,
\end{equation}
where $a_n=b_n$ in (Case~I) and $a_n=\lambda_n$ in (Case~II).
Then, part (b) can be achieved by (\ref{decom-S-P})-(\ref{remainder-R1-1}).
\vskip5pt

Now, we turn to the proof of part (c). Similar to the proof of part (a), we can get that
\begin{equation}\label{deviation-ineq-max-varepsilon}
\lim_{n\to\infty}\frac{1}{a_n^2}\log P\left(\frac{\max_{1\leq k\leq n}\varepsilon_{k,n}^2}{a_n\sqrt{n\kappa_n}}>\delta\right)=-\infty,
\end{equation}
where $a_n=b_n$ in (Case~I) and $a_n=\lambda_n$ in (Case~II).
Moreover, since
$$
T_n=\frac{L_n+2\rho_nU_n-\rho_n^2\varepsilon_{n,n}^2}{1-\rho_n^2}
=-\frac{\kappa_n\left(L_n+2\rho_nU_n-\rho_n^2\varepsilon_{n,n}^2\right)}{\gamma_2\left(2+\frac{\gamma_2}{\kappa_n}\right)},
$$
applying (\ref{remainder-R1-1}), (\ref{deviation-ineq-max-varepsilon}) and Lemma \ref{lem-martingales-ldp}, we can complete the proof of part (c).
\vskip5pt

Finally, if note that
\begin{equation}\label{Q-formulation}
Q_n
=\frac{1}{2}\left((1-\theta_n^2)S_n+\theta_n^2X_{n,n}^2+T_n\right),
\end{equation}
then, parts (a)-(c) in this lemma, immediately yield
part (d).

\subsection{Proof of Lemma \ref{lem-mdp-Z}}

In order to prove Lemma \ref{lem-mdp-Z}, we need a simplified version on the results of Puhalskii \cite{Puhalskii} as to the
moderate deviations for a sequence of martingale differences which is also stated as Theorem 4.9 in Bitseki Penda {\it et al.} \cite{Penda}.

\begin{lem}[Bitseki Penda {\it et al.}\cite{Penda}, Puhalskii \cite{Puhalskii}]\label{mdp-martingale-P}
Let $\{m_{k,n}, 1\leq k\leq n\}$ be a triangular array of martingale differences with values in~$\mathbb{R}^d$,
with respect to the filtration~$\{\mathcal{F}_{k,n}, 1\leq k\leq n\}$. Let $\{a_n, n\geq1\}$ be a sequence of real numbers
satisfying that as~$n\to+\infty$
$$
a_n\to\infty,\quad\frac{n}{a_n^2}\to\infty.
$$
Suppose that there exists a symmetric positive-semidefinite matrix $Q$ such that, for any $\delta>0$,
\begin{equation}\label{P-1}
\lim_{n\to\infty}\frac{1}{a_n^2}\log P\left(\left\|\frac{1}{n}\sum_{k=1}^{n}E\left(m_{k,n}m_{k,n}^{\tau}|\mathcal{F}_{k-1,n}\right)-Q\right\|>\delta\right)
=-\infty,
\end{equation}
where $\|M\|$ stands for the Euclidean norm of matrix $M$. Suppose that there exists a constant $c>0$ such that for each~$1\leq k\leq n$,
\begin{equation}\label{P-2}
|m_{k,n}|\leq c\frac{\sqrt{n}}{a_n},\quad a.s.
\end{equation}
where $|\upsilon|$ stands for the Euclidean norm of vector $\upsilon$.
Moreover, suppose that for all~$a>0$ and $\delta>0$, we have the exponential Lindeberg's condition
\begin{equation}\label{P-3}
\lim_{n\to\infty}\frac{1}{a_n^2}\log P\left(\left|\frac{1}{n}\sum_{k=1}^{n}E\left(|m_{k,n}|^2I_{\{|m_{k,n}|\geq a\frac{\sqrt{n}}{a_n}\}}\Big|\mathcal{F}_{k-1,n}\right)\right|>\delta\right)
=-\infty.
\end{equation}
Then, the sequence $\left\{\frac{\sum_{k=1}^{n}m_{k,n}}{\sqrt{n}a_n}, n\geq1\right\}$ satisfies the large deviation principle with speed $a_n^2$ and good rate function
$$
\Lambda^*(x)=\sup_{\lambda\in\mathbb{R}^d}
\left\{\lambda^{\tau}x-\frac{1}{2}\lambda^{\tau}Q\lambda\right\}, \quad x\in\mathbb{R}^d.
$$
In particular, if~$Q$ is invertible, $\Lambda^*(x)=\frac{1}{2}x^{\tau}Q^{-1}x$.
\end{lem}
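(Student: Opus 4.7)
The plan is to prove Lemma~\ref{mdp-martingale-P} via a Gärtner--Ellis-type scheme adapted to triangular martingale arrays, which is essentially a specialization of Puhalskii~\cite{Puhalskii}. Writing $S_n := \sum_{k=1}^n m_{k,n}$ and $\tilde\lambda_n := a_n\lambda/\sqrt{n}$ for $\lambda \in \mathbb{R}^d$, I would introduce the exponential martingale
\[
Z_n(\lambda) := \exp\bigl(\tilde\lambda_n^\tau S_n - V_n(\lambda)\bigr), \qquad V_n(\lambda) := \sum_{k=1}^n \log E\bigl(e^{\tilde\lambda_n^\tau m_{k,n}} \,\big|\, \mathcal{F}_{k-1,n}\bigr),
\]
which satisfies $E Z_n(\lambda) = 1$. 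The whole argument then reduces to showing $V_n(\lambda)/a_n^2 \to \tfrac12\lambda^\tau Q\lambda$ superexponentially at speed $a_n^2$: once this is in hand, a Chebyshev estimate applied to the identity $E e^{\tilde\lambda_n^\tau S_n} = E e^{V_n(\lambda)}$ produces the upper bound of the MDP, while a change of measure $d\tilde{P}_n = Z_n(\lambda)\,dP$, under which $S_n/(\sqrt{n}\,a_n)$ concentrates near $Q\lambda$, yields the matching lower bound by a (super)exponential law of large numbers argument.

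To analyze $V_n(\lambda)$, the uniform bound (\ref{P-2}) yields $|\tilde\lambda_n^\tau m_{k,n}| \leq c|\lambda|$, so each exponential sits in a bounded interval. The expansions $e^x = 1 + x + x^2/2 + O(|x|^3 e^{|x|})$ and $\log(1+u) = u + O(u^2)$, combined with the martingale identity $E(m_{k,n}|\mathcal{F}_{k-1,n}) = 0$, give
\[
V_n(\lambda) = \frac{a_n^2}{2n}\lambda^\tau \Bigl(\sum_{k=1}^n E(m_{k,n} m_{k,n}^\tau \,|\, \mathcal{F}_{k-1,n})\Bigr)\lambda + \mathrm{Err}_n(\lambda),
\]
with $|\mathrm{Err}_n(\lambda)| \leq C(|\lambda|)\,(a_n/\sqrt{n})^3 \sum_{k=1}^n E(|m_{k,n}|^3 \,|\, \mathcal{F}_{k-1,n})$. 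The leading term, divided by $a_n^2$, converges superexponentially to $\tfrac12\lambda^\tau Q\lambda$ directly by hypothesis (\ref{P-1}).

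The main obstacle is taming the cubic remainder $\mathrm{Err}_n(\lambda)/a_n^2$. The trick is to split $|m_{k,n}|^3$ at the Lindeberg threshold $a\sqrt{n}/a_n$ for a small $a > 0$: on $\{|m_{k,n}| \leq a\sqrt{n}/a_n\}$, the bound $|m_{k,n}|^3 \leq a(\sqrt{n}/a_n)|m_{k,n}|^2$ pulls out an arbitrarily small factor $a$ and leaves the conditional second-moment sum already controlled by (\ref{P-1}); on its complement, the exponential Lindeberg condition (\ref{P-3}) forces superexponential negligibility at speed $a_n^2$. Letting first $n \to \infty$ and then $a \to 0$ kills the remainder. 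Exponential tightness of $\{S_n/(\sqrt{n}\,a_n)\}$ on $\mathbb{R}^d$ at speed $a_n^2$, required to promote the Laplace computation to a genuine LDP, follows from (\ref{P-2}) by a coordinatewise Azuma--Hoeffding estimate. Putting these pieces together produces the rate function $\Lambda^*(x) = \sup_{\lambda}\{\lambda^\tau x - \tfrac12\lambda^\tau Q\lambda\}$, which reduces to $\tfrac12 x^\tau Q^{-1} x$ when $Q$ is invertible.
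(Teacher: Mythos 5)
First, a point of comparison: the paper does not prove this lemma at all --- it is stated as ``a simplified version of the results of Puhalskii \cite{Puhalskii}'' and used as a black box. So any self-contained argument is necessarily ``different from the paper''; your G\"artner--Ellis/exponential-martingale scheme is the classical route (Dembo's moderate deviations for martingales with bounded jumps, Worms, Djellout), and the Taylor expansion of $V_n(\lambda)$ together with the splitting of the cubic remainder at the Lindeberg threshold is exactly right. However, as written the sketch has a genuine gap at its central step. The relation $E e^{\tilde\lambda_n^{\tau}S_n}=E e^{V_n(\lambda)}$ is not an identity: $EZ_n(\lambda)=1$ only gives $E\bigl[e^{\tilde\lambda_n^{\tau}S_n-V_n(\lambda)}\bigr]=1$ with $V_n(\lambda)$ \emph{random}. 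Replacing $V_n(\lambda)$ by its deterministic limit cannot be done crudely, because the only almost-sure control you have is $|\tilde\lambda_n^{\tau}S_n|\leq c|\lambda|n$ (and likewise $Z_n(\lambda)\leq e^{c|\lambda|n}$), and since $n/a_n^2\to\infty$ a factor $e^{O(n)}$ swamps any probability that is merely superexponentially small at speed $a_n^2$. For the upper bound this is repairable by intersecting with the good event $A_n=\{|V_n(\lambda)-\tfrac12 a_n^2\lambda^{\tau}Q\lambda|\leq\varepsilon a_n^2\}$ \emph{before} applying Chernoff, and treating $P(A_n^c)$ separately; but for the lower bound the same obstruction reappears when you try to show that the tilted measure concentrates, since $d\tilde P_n/dP$ is again only bounded by $e^{O(n)}$. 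The standard remedy --- and the missing idea --- is to localize by the stopping time $\tau_n=\inf\{l:\|\frac1n\sum_{k\leq l}E(m_{k,n}m_{k,n}^{\tau}\mid\mathcal F_{k-1,n})-\frac{l}{n}Q\|>\delta\}$, prove the LDP for the stopped martingale (whose predictable characteristics are deterministically controlled, so the Laplace asymptotics are clean), and then use \eqref{P-1} to show $S_n$ and $S_{n\wedge\tau_n}$ are exponentially equivalent at speed $a_n^2$ via $P(\tau_n\leq n)$.

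A second, smaller gap: exponential tightness does not follow from a coordinatewise Azuma--Hoeffding bound. With increments bounded by $c\sqrt n/a_n$, Azuma gives $P(|S_n|>R\sqrt n\,a_n)\leq 2\exp\{-R^2a_n^4/(2c^2n)\}$, and $a_n^4/n=o(a_n^2)$, so this is \emph{not} superexponential at speed $a_n^2$. You need a Freedman/Bernstein-type inequality (e.g.\ the Bercu--Touati inequality already used elsewhere in the paper) combined with the superexponential bound on the quadratic variation from \eqref{P-1}; alternatively, once the Laplace upper bound is established correctly, exponential tightness follows from it by Chernoff in each coordinate direction. With the stopping-time localization and this correction, your outline does yield the stated rate function $\Lambda^*$.
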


%\begin{proof}
\noindent{\it\textbf{Proof of Lemma \ref{lem-mdp-Z}}.}
We will give the proof of Lemma \ref{lem-mdp-Z} under (Case~I) and (Case~II) respectively.

\vskip5pt

\noindent\textbf{Case I.} Firstly, we introduce the following modifications of the martingale arrays $\left\{M_{l,n}, 1\leq l\leq n\right\}$
and $\left\{U_{l,n}, 1\leq l\leq n\right\}$, which are slightly different from the equation (4.77) in Bitseki Penda {\it et al.} \cite{Penda}. For any $r>0$ and $1\leq l\leq n$,
\begin{equation}\label{modification-MU}
M_{l,n}^{(r)}:=\sum_{k=1}^lX_{k-1,n}^{(r)}V_k^{(n)},\;U_{l,n}^{(r)}:=\sum_{k=1}^l\varepsilon_{k-1,n}^{(r)}V_k^{(n)},
\;M_{n,n}^{(r)}:=M_n^{(r)},~ U_{n,n}^{(r)}:=U_n^{(r)},
\end{equation}
where
\begin{equation}\label{modification-VX}
\begin{aligned}
&V_k^{(n)}:=V_kI_{\left\{|V_k|\leq \sqrt{\kappa_n}\right\}}-E\left(V_kI_{\left\{|V_k|\leq \sqrt{\kappa_n}\right\}}\right),\\
X_{k,n}^{(r)}:=&X_{k,n}I_{\left\{|X_{k,n}|\leq r\frac{\sqrt{n\kappa_n^2}}{b_n}\right\}}
\quad\mbox{and}\quad \varepsilon_{k,n}^{(r)}:=\varepsilon_{k,n}I_{\left\{|\varepsilon_{k,n}|\leq r\frac{\sqrt{n}}{b_n}\right\}}.
\end{aligned}
\end{equation}
Let
$$
Z_{l,n}^{(r)}=\begin{pmatrix}\frac{M_{l,n}^{(r)}}{\kappa_n}\\\\U_{l,n}^{(r)}\end{pmatrix}:=\sum_{k=1}^lm_{k,n}^{(r)},
\qquad
Z_{n}^{(r)}:=Z_{n,n}^{(r)}.
$$

We divide our proofs into the following two steps.
\vskip5pt

\textbf{Step I.} {\it To prove that $\left\{\frac{Z_n^{(r)}}{b_n\sqrt{n\kappa_n}}, n\geq1\right\}$
satisfies the large deviations with speed $b_n^2$ and good rate function
$$
I_Z(x)=\frac{x^{\tau}\Theta^{-1} x}{2}, \quad x\in\mathbb{R}^2.
$$
}

In fact, we only need to verify the conditions (\ref{P-1})-(\ref{P-3}) in Lemma \ref{mdp-martingale-P}.
Note that, for any $1\leq k\leq n$,
\begin{equation}\label{eq-33}
\frac{1}{\kappa_n}|X_{k-1,n}^{(r)}V_k^{(n)}|\leq r\frac{\sqrt{n\kappa_n}}{b_n},
\qquad |\varepsilon_{k-1,n}^{(r)}V_k^{(n)}|\leq r\frac{\sqrt{n\kappa_n}}{b_n},
\end{equation}
hence, for all $a>0$, we can write that
\begin{align*}
&\frac{1}{\sigma_n^2}\sum_{k=1}^{n}E\left(|m_{k,n}^{(r)}|^2I_{\left\{|m_{k,n}^{(r)}|>a\frac{\sqrt{n}}{b_n}\right\}}\bigg|\mathcal{F}_{k-1}\right)\\
&\leq \sum_{k=1}^{n}\left(\frac{X_{k,n}^2}{\kappa_n^2}+\varepsilon_{k,n}^2\right)
I_{\left\{|\frac{X_{k,n}}{\kappa_n}|+|\varepsilon_{k,n}|>a\frac{\sqrt{n}}{b_n}\right\}}\\
&\leq\frac{1}{\kappa_n^2}\sum_{k=1}^{n}X_{k,n}^2I_{\left\{|X_{k,n}|>a\frac{\sqrt{n\kappa_n^2}}{2b_n}\right\}}
+\sum_{k=1}^{n}\varepsilon_{k,n}^2I_{\left\{|\varepsilon_{k,n}|>a\frac{\sqrt{n}}{2b_n}\right\}}\\
&\quad+\frac{1}{\kappa_n^2}\sum_{k=1}^{n}X_{k,n}^2I_{\left\{|\varepsilon_{k,n}|>a\frac{\sqrt{n}}{2b_n}\right\}}
+\sum_{k=1}^{n}\varepsilon_{k,n}^2I_{\left\{|X_{k,n}|>a\frac{\sqrt{n\kappa_n^2}}{2b_n}\right\}},
\end{align*}
where $\sigma_n^2=E\left(V_1^{(n)}\right)^2$. Now applying (\ref{quar-coquar-ineq-M}), we get
$$
\sum_{k=1}^{n}X_{k,n}^2I_{\left\{|X_{k,n}|>a\frac{\sqrt{n\kappa_n^2}}{2b_n}\right\}}
\leq\frac{4b_n^2}{a^2n\kappa_n^2}\Xi_n\leq\frac{4b_n^2\kappa_n^6}{a^2\gamma_1^4\gamma_2^4n}\Lambda_n
$$
and
$$
\sum_{k=1}^{n}\varepsilon_{k,n}^2I_{\left\{|\varepsilon_{k,n}|>a\frac{\sqrt{n}}{2b_n}\right\}}
\leq\frac{4b_n^2}{a^2n}\sum_{k=1}^{n}\varepsilon_{k,n}^4\leq\frac{4b_n^2\kappa_n^4}{a^2\gamma_2^4n}\Lambda_n.
$$
Therefore we can obtain that, for any $\delta>0$,
{\small$$
P\left(\frac{1}{n\kappa_n^3}\sum_{k=1}^{n}X_{k,n}^2I_{\left\{|X_{k,n}|> a\frac{\sqrt{n\kappa_n^2}}{b_n}\right\}}>\delta\right)
\leq P\left(\frac{\Lambda_n-nEV_1^4}{b_n\sqrt{n}}
>\frac{\sqrt{n}}{b_n}\left(\frac{\delta a^2\gamma_1^4\gamma_2^4n}{4b_n^2\kappa_n^3}-EV_1^4\right)\right)
$$}
and
{\small
$$
P\left(\frac{1}{n\kappa_n}\sum_{k=1}^{n}\varepsilon_{k,n}^2I_{\left\{|\varepsilon_{k,n}|>a\frac{\sqrt{n\kappa_n}}{2b_n}\right\}}>\delta\right)
\leq P\left(\frac{\Lambda_n-nEV_1^4}{b_n\sqrt{n}}
>\frac{\sqrt{n}}{b_n}\left(\frac{\delta a^2\gamma_2^4n}{4b_n^2\kappa_n^3}-EV_1^4\right)\right),
$$
}
which implies, by the condition (H-I) and Lemma \ref{mdp-V}, that
\begin{equation}\label{eq-34}
\lim_{n\to\infty}\frac{1}{b_n^2}
\log P\left(\frac{1}{n\kappa_n^3}\sum_{k=1}^{n}X_{k,n}^2I_{\left\{|X_{k,n}|> a\frac{\sqrt{n\kappa_n^2}}{b_n}\right\}}>\delta\right)
=-\infty
\end{equation}
and
\begin{equation}\label{eq-35}
\lim_{n\to\infty}\frac{1}{b_n^2}
\log P\left(\frac{1}{n\kappa_n}\sum_{k=1}^{n}\varepsilon_{k,n}^2I_{\left\{|\varepsilon_{k,n}|>a\frac{\sqrt{n}}{2b_n}\right\}}>\delta\right)
=-\infty.
\end{equation}
Moreover, by H\"{o}lder inequality and (\ref{quar-coquar-ineq-M}),
$$
\sum_{k=1}^{n}X_{k,n}^2I_{\left\{|\varepsilon_{k,n}|>a\frac{\sqrt{n}}{2b_n}\right\}}
\leq\frac{4b_n^2}{a^2n}\sum_{k=1}^{n}X_{k,n}^2\varepsilon_{k,n}^2\leq\frac{4b_n^2\kappa_n^6}{a^2\gamma_1^2\gamma_2^4n}\Lambda_n
$$
and
$$
\sum_{k=1}^{n}\varepsilon_{k,n}^2I_{\left\{|X_{k,n}|>a\frac{\sqrt{n\kappa_n^2}}{2b_n}\right\}}
\leq\frac{4b_n^2}{a^2n\kappa_n^2}\sum_{k=1}^{n}X_{k,n}^2\varepsilon_{k,n}^2\leq\frac{4b_n^2\kappa_n^4}{a^2\gamma_1^2\gamma_2^4n}\Lambda_n.
$$
Therefore, similar to the proofs of (\ref{eq-34}) and (\ref{eq-35}), we can obtain that
\begin{equation}\label{eq-36}
\lim_{n\to\infty}\frac{1}{b_n^2}
\log P\left(\frac{1}{n\kappa_n^3}\sum_{k=1}^{n}X_{k,n}^2I_{\left\{|\varepsilon_{k,n}|> a\frac{\sqrt{n}}{b_n}\right\}}>\delta\right)
=-\infty
\end{equation}
and
\begin{equation}\label{eq-37}
\lim_{n\to\infty}\frac{1}{b_n^2}
\log P\left(\frac{1}{n\kappa_n}\sum_{k=1}^{n}\varepsilon_{k,n}^2I_{\left\{|X_{k,n}|>a\frac{\sqrt{n\kappa_n^2}}{2b_n}\right\}}>\delta\right)
=-\infty.
\end{equation}
Now, from (\ref{eq-34})-(\ref{eq-37}) and the fact that $\sigma_n^2\to\sigma^2$ as~$n\to\infty$, it follows that
\begin{equation}\label{eq-38}
\lim_{n\to\infty}\frac{1}{b_n^2}
\log P\left(\frac{1}{n\kappa_n}\sum_{k=1}^{n}E\left(|m_{k,n}^{(r)}|^2
I_{\left\{|m_{k,n}^{(r)}|>a\frac{\sqrt{n\kappa_n}}{b_n}\right\}}\bigg|\mathcal{F}_{k-1}\right)>\delta\right)
=-\infty.
\end{equation}
Hence, by the equations (\ref{eq-33}), (\ref{eq-38}) and Lemma \ref{mdp-martingale-P}, in order to complete the proof of {\textbf{Step I}, it is sufficient to show that
\begin{equation}\label{eq-39}
\lim_{n\to\infty}\frac{1}{b_n^2}
\log P\left(\left\|\frac{\langle Z_{\bullet,n}^{(r)}\rangle_n}{n\kappa_n}
-\Theta\right\|>\delta\right)=-\infty.
\end{equation}
Let $S_{l,n}^{(r)}=\sum_{k=1}^{l}\left(X_{k,n}^{(r)}\right)^2, \;S_{n,n}^{(r)}:=S_{n}^{(r)}$, then
$$
\langle M_{\bullet,n}^{(r)}\rangle_n=\sigma_n^2S_{n-1,n}^{(r)}.
$$
We can write that, for any $\delta>0$,
\begin{align*}
&P\left(\left|\frac{\langle M_{\bullet,n}^{(r)}\rangle_n}{n\kappa_n^3}
+\frac{\sigma^2\sigma_n^2}{2\gamma_1\gamma_2(\gamma_1+\gamma_2)}\right|>\delta\right)\\
&\leq P\left(\left|\frac{S_{n-1,n}}{n\kappa_n^3}
+\frac{\sigma^2}{2\gamma_1\gamma_2(\gamma_1+\gamma_2)}\right|>\frac{\delta}{2\sigma_n^2}\right)
+P\left(\left|\frac{S_{n-1,n}-S_{n-1,n}^{(r)}}{n\kappa_n^3}\right|>\frac{\delta}{2\sigma_n^2}\right).
\end{align*}
From Lemma \ref{lem-quar-coquar-ldp} and (\ref{eq-34}), it follows that
\begin{equation}\label{eq-40}
\lim_{n\to\infty}\frac{1}{b_n^2}
\log P\left(\left|\frac{\langle M_{\bullet,n}^{(r)}\rangle_n}{n\kappa_n^3}
+\frac{\sigma^2\sigma_n^2}{2\gamma_1\gamma_2(\gamma_1+\gamma_2)}\right|>\delta\right)=-\infty.
\end{equation}
Similarly, from Lemma \ref{lem-quar-coquar-ldp}, (\ref{eq-35}) and the fact that
$$
\langle U_{\bullet,n}^{(r)}\rangle_n=\sigma_{n}^2\sum_{k=1}^{n-1}\left(\varepsilon_{k,n}^{(r)}\right)^2,
$$
it follows that
\begin{equation}\label{eq-41}
\lim_{n\to\infty}\frac{1}{b_n^2}
\log P\left(\left|\frac{\langle U_{\bullet,n}^{(r)}\rangle_n}{n\kappa_n}
+\frac{\sigma^2\sigma_n^2}{2\gamma_2}\right|>\delta\right)=-\infty.
\end{equation}
The above discussions, together with (\ref{eq-40}) and (\ref{eq-41}), show that, in order to get (\ref{eq-39}), we only have to
prove that
\begin{equation}\label{eq-42}
\lim_{n\to\infty}\frac{1}{b_n^2}
\log P\left(\left|\frac{\langle X_{\bullet,n}^{(r)}, U_{\bullet,n}^{(r)}\rangle_n}{n\kappa_n^2}
-\frac{\sigma^2\sigma_n^2}{2\gamma_2(\gamma_1+\gamma_2)}\right|>\delta\right)=-\infty.
\end{equation}
In fact,
$$
\langle X_{\bullet,n}^{(r)}, U_{\bullet,n}^{(r)}\rangle_n=\sigma_{n}^2\sum_{k=1}^{n-1}X_{k,n}^{(r)}\varepsilon_{k,n}^{(r)},
$$
which implies that
{\small\begin{equation}\label{eq-43}
\begin{aligned}
&P\left(\left|\frac{\langle X_{\bullet,n}^{(r)}, U_{\bullet,n}^{(r)}\rangle_n}{n\kappa_n^2}
-\frac{\sigma^2\sigma_n^2}{2\gamma_2(\gamma_1+\gamma_2)}\right|>\delta\right)\\
&\leq P\left(\left|\frac{Q_{n-1,n}}{n\kappa_n^2}
-\frac{\sigma^2}{2\gamma_2(\gamma_1+\gamma_2)}\right|>\frac{\delta}{3\sigma_n^2}\right)
+P\left(\frac{\left|\sum_{k=1}^{n-1}X_{k,n}\left(\varepsilon_{k,n}^{(r)}-\varepsilon_{k,n}\right)\right|}{n\kappa_n^2}
>\frac{\delta}{3\sigma_n^2}\right)\\
&\quad+P\left(\frac{\left|\sum_{k=1}^{n-1}\varepsilon_{k,n}^{(r)}\left(X_{k,n}^{(r)}-X_{k,n}\right)\right|}{n\kappa_n^2}
>\frac{\delta}{3\sigma_n^2}\right).
\end{aligned}
\end{equation}}
Now, by H\"{o}lder inequality, we know that
{\small\begin{align*}
&P\left(\frac{\left|\sum_{k=1}^{n-1}X_{k,n}\left(\varepsilon_{k,n}^{(r)}-\varepsilon_{k,n}\right)\right|}{n\kappa_n^2}
>\frac{\delta}{3\sigma_n^2}\right)\\
&\leq P\left(\frac{S_{n-1,n}}{n\kappa_n^3}>-\frac{\sigma^2}{\gamma_1\gamma_2(\gamma_1+\gamma_2)}\right)
+P\left(\frac{1}{n\kappa_n}\sum_{k=1}^{n}\varepsilon_{k,n}^2I_{\left\{|\varepsilon_{k,n}|>r\frac{\sqrt{n}}{2b_n}\right\}}
>-\frac{\gamma_1\gamma_2(\gamma_1+\gamma_2)\delta^2}{9\sigma^2\sigma_n^2}\right).
\end{align*}}
From Lemma \ref{lem-quar-coquar-ldp} and (\ref{eq-35}), it follows that
\begin{equation}\label{eq-44}
\lim_{n\to\infty}\frac{1}{b_n^2}
\log P\left(\frac{\left|\sum_{k=1}^{n-1}X_{k,n}\left(\varepsilon_{k,n}^{(r)}-\varepsilon_{k,n}\right)\right|}{n\kappa_n^2}
>\frac{\delta}{3\sigma_n^2}\right)=-\infty.
\end{equation}
Similarly, by  Lemma~\ref{lem-quar-coquar-ldp} and ~(\ref{eq-34}), we can obtain that
\begin{equation}\label{eq-45}
\lim_{n\to\infty}\frac{1}{b_n^2}
\log P\left(\frac{\left|\sum_{k=1}^{n-1}\varepsilon_{k,n}^{(r)}\left(X_{k,n}^{(r)}-X_{k,n}\right)\right|}{n\kappa_n^2}
>\frac{\delta}{3\sigma_n^2}\right)=-\infty.
\end{equation}
Then, Lemma \ref{lem-quar-coquar-ldp}, together with  (\ref{eq-43})-(\ref{eq-45}), yields the equation (\ref{eq-42}).

\vskip5pt

\textbf{Step II.} {\it To prove the exponential equivalence between $Z_n^{(r)}$ and $Z_n$, i.e. for any $\delta>0$,
\begin{equation}\label{eq-46}
\limsup_{n\to\infty}\frac{1}{b_n^2}\log P\left(\frac{\left|Z_n-Z_n^{(r)}\right|}{b_n\sqrt{n\kappa_n}}>\delta\right)
=-\infty.
\end{equation}}
In fact,
\begin{equation}\label{eq-47}
M_n-M_n^{(r)}=F_n^{(r)}+E_n^{(r)},
\end{equation}
where
$$
F_n^{(r)}=\sum_{k=1}^{n}\left(X_{k-1,n}-X_{k-1,n}^{(r)}\right)V_k,~
E_{l,n}^{(r)}=\sum_{k=1}^{l}X_{k-1,n}^{(r)}\left(V_k-V_k^{(n)}\right),~E_n^{(r)}:=E_{n,n}^{(r)}.
$$
From (\ref{quar-coquar-ineq-M}), we have
\begin{align*}
|F_n^{(r)}|&\leq\frac{b_n^2}{r^2n\kappa_n^2}\left|\sum_{k=1}^{n}X_{k-1,n}^3V_k\right|\\
&\leq\frac{b_n^2}{r^2n\kappa_n^2}\left(\sum_{k=1}^{n}X_{k-1,n}^4\right)^{\frac{1}{2}}
\left(\sum_{k=1}^{n}X_{k-1,n}^2V_k^2\right)^{\frac{1}{2}}
\leq\frac{\kappa_n^4b_n^2}{r^2\gamma_1^3\gamma_2^3n}\Lambda_n.
\end{align*}
Hence, by condition (H-I) and Lemma \ref{mdp-V}, one can see that
\begin{equation}\label{eq-48}
\begin{aligned}
&\limsup_{n\to\infty}\frac{1}{b_n^2}\log P\left(\frac{|F_n^{(r)}|}{b_n\sqrt{n\kappa_n^3}}>\delta\right)\\
&\leq
\limsup_{n\to\infty}\frac{1}{b_n^2}\log P\left(\frac{\Lambda_n-nEV_1^4}{\sqrt{n}b_n}>
\frac{\sqrt{n}}{b_n}\left(\frac{\delta r^2\gamma_1^3\gamma_2^3\sqrt{n}}{\sqrt{\kappa_n^5}b_n}-EV_1^4\right)\right)=-\infty.
\end{aligned}
\end{equation}

As for the item $E_n^{(r)}$, we also claim that
\begin{equation}\label{eq-49}
\limsup_{n\to\infty}\frac{1}{b_n^2}\log P\left(\frac{\left|E_n^{(r)}\right|}{b_n\sqrt{n\kappa_n^3}}>\delta\right)
=-\infty.
\end{equation}
In fact, $\left\{E_{l,n}^{(r)}, 1\leq l\leq n\right\}$ is a locally square-integrable martingale array.
%In order to prove (\ref{eq-49}),
%we will make use of Theorem~1 in Djellout \cite{Djellout}.
For all $1\leq k\leq n$, we have
\begin{align*}
&P\left(\left|X_{k-1,n}^{(r)}\left(V_k-V_k^{(n)}\right)\right|>b_n\sqrt{n\kappa_n^3}\right)\\
&\leq P\left(\left|V_1-V_1^{(n)}\right|>b_n^2\sqrt{\kappa_n}\right)\\
&\leq\exp\left\{-\frac{1}{2}t_0b_n^2\sqrt{\kappa_n}\right\}E\exp\left\{t_0|V_1|\right\},
\end{align*}
which implies that
\begin{equation}\label{eq-50}
\limsup_{n\to\infty}\frac{1}{b_n^2}\log\left(n \esssup_{1\leq k\leq n}P\left(\left|X_{k-1,n}^{(r)}\left(V_k-V_k^{(n)}\right)\right|>b_n\sqrt{n\kappa_n^3}\,\big|\mathcal{F}_{k-1}\right)\right)=-\infty.
\end{equation}
Moreover, for all $a>0$ and $\delta>0$,
\begin{align*}
 &P\left(\frac{1}{n\kappa_n^3}\sum_{k=1}^{n}\left(E\left(X_{k-1,n}^{(r)}(V_k-V_k^{(n)})\right)^2
I_{\left\{|X_{k-1,n}^{(r)}(V_k-V_k^{(n)})|> a\frac{\sqrt{n\kappa_n^3}}{b_n}\right\}}\bigg|\mathcal{F}_{k-1}\right)>\delta\right)\\
&\leq
P\left(\frac{1}{n\kappa_n^3}\sum_{k=1}^{n}\left(X_{k-1,n}^{(r)}\right)^2
I_{\left\{|X_{k-1,n}^{(r)}|> a\frac{\sqrt{n\kappa_n^2}}{b_n^3}\right\}}>\frac{\delta}{\tilde{\sigma}_n^2}\right)
+P\left(\max_{1\leq k\leq n}|V_k-V_k^{(n)}|\geq b_n^2\sqrt{\kappa_n}\right)\\
&\leq P\left(\frac{\Lambda_n-nEV_1^4}{b_n\sqrt{n}}
>\frac{\sqrt{n}}{b_n}\left(\frac{\delta a^2\gamma_1^4\gamma_2^4n}{b_n^6\kappa_n^3\tilde{\sigma}_n^2}-EV_1^4\right)\right)
+nP\left(|V_1-V_1^{(n)}|\geq b_n^2\sqrt{\kappa_n}\right),
\end{align*}
where $\tilde{\sigma}_n^2:=E(V_1-V_1^{(n)})^2$.
Now, since $\tilde{\sigma}_n^2\leq \frac{EV_1^4}{\kappa_n}$, we obtain that, by Lemma \ref{mdp-V},
\begin{align*}
&\limsup_{n\to\infty}\frac{1}{b_n^2}\log P\left(\frac{\Lambda_n-nEV_1^4}{b_n\sqrt{n}}
>\frac{\sqrt{n}}{b_n}\left(\frac{\delta a^2\gamma_1^4\gamma_2^4n}{b_n^6\kappa_n^3\tilde{\sigma}_n^2}-EV_1^4\right)\right)\\
&\leq \limsup_{n\to\infty}\frac{1}{b_n^2}\log P\left(\frac{\Lambda_n-nEV_1^4}{b_n\sqrt{n}}
>\frac{\sqrt{n}}{b_n}\left(\frac{\delta a^2\gamma_1^4\gamma_2^4n}{b_n^6\kappa_n^2EV_1^4}-EV_1^4\right)\right)
=-\infty.
\end{align*}
Hence,
\begin{equation}\label{eq-50-1}
\begin{aligned}
&\limsup_{n\to\infty}\frac{1}{b_n^2}\log P\left(\frac{1}{n\kappa_n^3}\sum_{k=1}^{n}\left(E\left(X_{k-1,n}^{(r)}(V_k-V_k^{(n)})\right)^2
I_{\left\{|X_{k-1,n}^{(r)}(V_k-V_k^{(n)})|> a\frac{\sqrt{n\kappa_n^3}}{b_n}\right\}}\bigg|\mathcal{F}_{k-1}\right)>\delta\right)\\
&=-\infty.
\end{aligned}
\end{equation}
Furthermore, for all $\delta>0$, using Lemma \ref{lem-quar-coquar-ldp}, (\ref{eq-34}) and the fact that $\tilde{\sigma}_n^2\to0$ as $n\to\infty$, we get
\begin{equation}\label{eq-50-2}
\begin{aligned}
&\limsup_{n\to\infty}\frac{1}{b_n^2}\log P\left(\left|\frac{\langle E_{\bullet,n}^{(r)}\rangle_n}{n\kappa_n^3}\right|>\delta\right)\\
&\leq\limsup_{n\to\infty}\frac{1}{b_n^2}\log\left(P\left(\left|\frac{S_{n-1,n}}{n\kappa_n^3}\right|>\frac{\delta}{2\tilde{\sigma}_n^2}\right)
+P\left(\left|\frac{S_{n-1,n}-S_{n-1,n}^{(r)}}{n\kappa_n^3}\right|>\frac{\delta}{2\tilde{\sigma}_n^2}\right)\right)=-\infty.
\end{aligned}
\end{equation}
Finally, the above discussions, together with (\ref{eq-50})-(\ref{eq-50-2}) and Theorem 1 in Djellout \cite{Djellout}, show that $\left\{\frac{1}{b_n\sqrt{n\kappa_n^3}}E_{n}^{(r)}, n\geq1\right\}$ satisfies the large deviations with speed $b_n^2$
and good rate function
$$
I_E(x)=\left\{\begin{array}{ll}
0,& \textrm{if } x=0,\\
+\infty,& \textrm{otherwise}.\\\end{array}\right.
$$
Hence, we complete the proof of (\ref{eq-49}). Obviously, (\ref{eq-48}) and (\ref{eq-49}) imply that
\begin{equation}\label{eq-51}
\limsup_{n\to\infty}\frac{1}{b_n^2}\log P\left(\frac{\left|M_n-M_n^{(r)}\right|}{b_n\sqrt{n\kappa_n^3}}>\delta\right)
=-\infty.
\end{equation}
Similar to the proof of (\ref{eq-51}), we can obtain that
\begin{equation}\label{eq-52}
\limsup_{n\to\infty}\frac{1}{b_n^2}\log P\left(\frac{\left|U_n-U_n^{(r)}\right|}{b_n\sqrt{n\kappa_n}}>\delta\right)
=-\infty,
\end{equation}
which, together with (\ref{eq-51}), achieves the proof of (\ref{eq-46}).

\vskip5pt
\noindent{\textbf{Case~II.}}  In this case, let $V_k^{(n)}, \varepsilon_{k,n}^{(r)}, U_{l,n}^{(r)}$ be defined as in
(\ref{modification-MU}) and (\ref{modification-VX}). However,
we need a new modifications of the martingale array $\left\{M_{l,n}, 1\leq l\leq n\right\}$: for any $r>0$ and $1\leq l\leq n$,
$$
\tilde{M}_{l,n}^{(r)}:=\sum_{k=1}^l\tilde{X}_{k-1,n}^{(r)}V_k^{(n)},
\qquad\tilde{M}_{n,n}^{(r)}:=\tilde{M}_n^{(r)},
$$
where
$$
\tilde{X}_{k,n}^{(r)}:=X_{k,n}I_{\left\{|X_{k,n}|\leq r\frac{\sqrt{n\kappa_n^2}}{\lambda_n}\right\}}.
$$
Moreover, let
$$
\tilde{Z}_{l,n}^{(r)}=\begin{pmatrix}\tilde{M}_{l,n}^{(r)}\\\\U_{l,n}^{(r)}\end{pmatrix}:=\sum_{k=1}^l\tilde{m}_{k,n}^{(r)},\quad
\tilde{Z}_{n}^{(r)}:=\tilde{Z}_{n,n}^{(r)}.
$$
Similar to (Case~I), we can show that,
\begin{itemize}
\item $\left\{\frac{\tilde{Z}_n^{(r)}}{\lambda_n\sqrt{n\kappa_n}}, n\geq1\right\}$
satisfies the large deviations with speed $\lambda_n^2$ and good rate function
$$
J_{\tilde Z}(x)=\frac{x^{\tau}\tilde{\Theta}^{-1} x}{2}, \quad x\in\mathbb{R}^2;
$$

\item the exponential equivalence between $\tilde{Z}_n^{(r)}$ and $\tilde{Z}_n$, i.e. for any $\delta>0$,
$$
\limsup_{n\to\infty}\frac{1}{\lambda_n^2}\log P\left(\frac{\left|\tilde{Z}_n-\tilde{Z}_n^{(r)}\right|}{\lambda_n\sqrt{n\kappa_n}}>\delta\right)
=-\infty.
$$
\end{itemize}
%To be noted that in this step, we need the conditions:
%$$
%\frac{n}{\lambda_n^6\kappa_n^6}\to\infty, \qquad \frac{n}{\lambda_n^2\kappa_n^{11}}\to\infty.
%$$

The proof of Lemma \ref{lem-mdp-Z} is complete.

%%%%%%%%%%%%%%%%%%%%%%%%%%%%%%%%%%%%%%%%%%%%%%%%%%%%%%%%%%%%%%%%%%%%%%%%%%%%%%

%%%%%%%%%%%%%%%%%%%%%%%%%%%%%%%%%%%%%%%%%%%%%%%%%%%%%%%%%%%%%%%%%%%%%%%%%%%%%%

\section*{Acknowledgements}
{\small
The authors would like to express their great gratitude to the anonymous referee and AE for the careful reading and
insightful comments, which surely lead to an improved presentation of this paper. The authors also appreciate Huarui He for her help on simulations. The work of H. Jiang was partially supported by the Natural Science Foundation of Jiangsu Province of China (No. BK20231435) and Fundamental Research Funds for the Central Universities
(No. NS2022069), and the work of G. Yang was partially supported by the Foundation of Young Scholar of the Educational Department of Henan Province (No. 2019GGJS012). 
}

\end{document}